\documentclass[11pt]{article}

\usepackage[margin=1in]{geometry}
\usepackage{amsmath,amssymb,amsthm,mathtools,mathrsfs,bm}
\usepackage{booktabs,array}
\usepackage{enumitem}
\usepackage{microtype}
\usepackage{lastpage}
\usepackage{graphicx}
\usepackage[table]{xcolor}
\definecolor{ourgray}{gray}{0.93}

\usepackage[authoryear,round]{natbib}
\usepackage{hyperref}
\hypersetup{colorlinks=true,linkcolor=blue!55!black,citecolor=blue!55!black,urlcolor=blue!55!black}

\usepackage{xeCJK}

\setlist{itemsep=0.25em,topsep=0.4em}
\setlist[enumerate]{leftmargin=2.2em}
\setlist[itemize]{leftmargin=1.8em}
\allowdisplaybreaks
\theoremstyle{plain}
\newtheorem{theorem}{Theorem}
\newtheorem{lemma}[theorem]{Lemma}
\newtheorem{proposition}[theorem]{Proposition}

\theoremstyle{definition}
\newtheorem{definition}[theorem]{Definition}
\newtheorem{remark}[theorem]{Remark}
\numberwithin{theorem}{section}

\newcommand{\equalcontrib}{\textsuperscript{*}}
\newcommand{\corresponding}{\textsuperscript{\dag}}

\title{An $\Omega(\kappa_y^8\epsilon^{-6})$ Lower Bound for Stochastic
NC-SC Bilevel Optimization with First-order Oracles}

\author{%
\begin{tabular}{@{}ccc@{}}
\parbox{0.28\textwidth}{\centering
Zhihao Gu\equalcontrib\\[2pt]
The Pennsylvania State University\\
{\footnotesize\texttt{zbg5155@psu.edu}}
}
&
\parbox{0.28\textwidth}{\centering
Qilong Wu\equalcontrib\\[2pt]
The Chinese University of Hong Kong, Shenzhen\\[2pt]
{\footnotesize\texttt{qilongwu@link.cuhk.edu.cn}}
}
&
\parbox{0.28\textwidth}{\centering
Junchi Yang\corresponding\\[2pt]
The Chinese University of Hong Kong, Shenzhen\\[2pt]
{\footnotesize\texttt{yangjunchi@cuhk.edu.cn}}
}
\end{tabular}%
}
\date{}

\begin{document}
\maketitle

\begingroup
\renewcommand{\thefootnote}{\fnsymbol{footnote}}
\footnotetext[1]{Equal contribution.}
\footnotetext[2]{Corresponding author. Also affiliated with Shenzhen Research Institute of Big Data and 
Shenzhen International Center for Industrial and Applied Mathematics.}
\endgroup

\begin{abstract}
We study the oracle complexity of finding \(\epsilon\)-stationary points of smooth bilevel optimization problems with a nonconvex upper-level objective and a strongly convex lower-level problem. We consider a stochastic first-order oracle that returns unbiased stochastic gradients of both the upper- and lower-level objectives, with variance bounded by \(\sigma^2\). We prove that, for any initial optimality gap \(\Delta>0\) and all sufficiently small \(\epsilon>0\), every adaptive randomized first-order algorithm requires
$
\Omega\!\left(
\Delta \kappa_y^2 \epsilon^{-2}
\max\{1,\sigma^2\kappa_y^6\epsilon^{-4}\}
\right)
$
oracle queries to find an \(\epsilon\)-stationary point of its hyper-objective function, where \(\kappa_y\) denotes the condition number of the lower-level problem. In particular, in the noise-dominated regime, the lower bound is
$
\Omega\!\left(\Delta\sigma^2\kappa_y^8\epsilon^{-6}\right).
$
This establishes the optimality of the \(\epsilon^{-6}\) dependence achieved by the best-known first-order stochastic methods.

\end{abstract}


\section{Introduction}
\label{sec:introduction}

Bilevel optimization is a fundamental framework for hierarchical
decision making in which one optimization problem is nested inside
another \citep{colson2007overview}.  In its canonical unconstrained form, the problem is written as
\begin{equation}
  \min_{\bm{x}\in\mathbb R^{d_x}}
  \Phi(\bm{x})
  \coloneqq
  f\bigl(\bm{x},\bm{y}^*(\bm{x})\bigr),
  \qquad
  \bm{y}^*(\bm{x})
  \coloneqq
  \arg\min_{\bm{y}\in\mathbb R^{d_y}}
  g(\bm{x},\bm{y}).
  \label{eq:introduction-bilevel}
\end{equation}
This nested structure captures a
wide range of modern learning problems, including hyperparameter
optimization and meta-learning
\citep{franceschi2018bilevel,rajeswaran2019meta}, as well as
reinforcement learning and actor-critic methods
\citep{hong2023two}.

We study the smooth nonconvex-strongly-convex setting, where
\(g(\bm{x},\cdot)\) is strongly convex for every \(\bm{x}\), while the
upper-level objective \(f\), and hence the hyper-objective \(\Phi\), may
be nonconvex. Under standard smoothness assumptions, \(\Phi\) is
differentiable, but its gradient involves the inverse lower-level
Hessian. Classical approaches therefore approximate this inverse
\citep{ghadimi2018approximation,ji2021bilevel}, typically requiring
Hessian-vector and thus access to
second-order information. Since such products can be substantially
more expensive than gradients, first-order bilevel methods using
only upper- and lower-level gradients have been developed
\citep{shen2023penalty,kwon2023fully,chen2025near,kwon2024penalty}.
In the stochastic NC-SC setting under bounded variance, the best-known
first-order methods exhibit an \(\epsilon^{-6}\) dependence: the
double-loop F$^3$BSA method achieves
\(\tilde{\mathcal O}(\bar\kappa_y^{11}\epsilon^{-6})\)
\citep{chen2025near}, while the single-loop SGHA method achieves
\(\mathcal O(\bar\kappa_y^{17}\epsilon^{-6})\)
\citep{gu2026sgha}, where \(\bar\kappa_y\) denotes the global condition
number. Whether this \(\epsilon^{-6}\) dependence is unavoidable remains
open.

More recently, \citet{chen2025condition} introduced a nested bilevel zero-chain construction, proving \(\Omega(\kappa_y^{2.5}\epsilon^{-2})\) and \(\Omega(\kappa_y^{4.5}\epsilon^{-4})\) lower bounds for zero-respecting algorithms on a constrained domain under the exact first-order oracle ($\mathcal{FO}$) and the stochastic first-order oracle ($\mathcal{SFO}$), respectively, where $\kappa_y$ denotes the lower-level condition number. Their construction uses a deterministic upper-level objective and a quadratic lower-level objective, with stochasticity entering through selected lower-level gradient responses. Although this yields a stronger dependence on \(\kappa_y\), the stochastic lower bound scales only as \(\epsilon^{-4}\), leaving an \(\epsilon^{-2}\) gap to the known \(\epsilon^{-6}\) upper bounds. This motivates the central question:
$$
\textit{Does stochastic NC-SC bilevel optimization require \(\Omega(\epsilon^{-6})\) standard \(\mathcal{SFO}\) calls?}
$$

In this paper, we answer this question affirmatively. Under a standard 
stochastic first-order oracle, we prove that every randomized adaptive
algorithm requires
\[
  \Omega\left(
    \frac{\Delta\kappa_y^2}{\epsilon^2}
    \max\left\{
      1,
      \frac{\sigma^2\kappa_y^6}{\epsilon^4}
    \right\}
  \right)
\]
oracle calls to find an \(\epsilon\)-stationary point.

Our construction starts from the smooth nonconvex zero-chain of
\citet{carmon2020lower} and its stochastic extension in
\citet{arjevani2023lower}.  In these constructions, each oracle query
can reveal at most one new relevant coordinate.  We use a smooth
indicator to distinguish the coordinates already revealed by previous oracle responses from the incoming coordinate, and randomize only the part of the objective associated with the incoming coordinate.  This part appears in the stochastic lower-level oracle only when a fresh
Bernoulli sample is successful.

To keep the bilevel problem unchanged, we place the
incoming-coordinate term in a two-dimensional strongly convex quadratic block
and add a compensation term.  The lower-level minimizer
remains explicit, and the resulting upper-level objective $\Phi$ coincides with the original zero-chain objective.  The incoming-coordinate term
enters the stochastic lower-level gradient with a coefficient of order
$\kappa_y^{-1}$, while the relevant off-diagonal entry of the inverse
lower Hessian has order $\kappa_y$.  These two factors cancel in the
hypergradient.  Thus the stochastic lower-level gradient has smaller
variance, while the corresponding component of $\nabla\Phi$ has the
same order as in the original zero-chain construction.

Finally, following the random-rotation arguments of
\citet{carmon2020lower} and \cite{arjevani2023lower}, the construction also
applies to adaptive randomized first-order algorithms.  The choice of
the chain parameters gives $T=\Theta\!\left(
    \Delta\kappa_y^2\epsilon^{-2}
  \right)$ coordinates, while the variance constraint permits a progress probability $p$ satisfying $p^{-1}
  =
  \Theta\!\left(
    \max\left\{
      1,
      \sigma^2\kappa_y^6\epsilon^{-4}
    \right\}
  \right)$. Since each successful Bernoulli sample increases the coordinate progress by at most one, the required number of oracle calls is of order $T/p$.  This gives $\Omega\!\left(
    \Delta\kappa_y^2\epsilon^{-2}
    \max\left\{
      1,
      \sigma^2\kappa_y^6\epsilon^{-4}
    \right\}
  \right)$, which reduces to
$\Omega(\Delta\sigma^2\kappa_y^8\epsilon^{-6})$ in the noise-dominated regime.

\begin{table}[t]
\centering
\caption{first-order oracle complexity bounds for finding an
\(\epsilon\)-stationary point in smooth NC-SC minimax and bilevel
optimization. }
\label{tab:stochastic-lower-bounds}

\begingroup
\setlength{\tabcolsep}{5pt}
\renewcommand{\arraystretch}{1.18}

\resizebox{\linewidth}{!}{
\begin{tabular}{lllll}
\toprule
Problem setting
& Reference
& Oracle model
& Method / Algorithm class
& Oracle complexity
\\
\midrule
Stochastic NC-SC Minimax
& \citet{zhang2022sapd+}
& \(\mathcal{SFO}\)
& SAPD+
& \(\mathcal{O}\!\left(
\bar\kappa_y\epsilon^{-4}
  \right)\)
\\

Stochastic NC-SC Minimax
& \citet{li2021complexity}
&  \(\mathcal{SFO}\)
& Zero-respecting algorithms
& \(\Omega\!\left(
    \sqrt{\kappa_y}\epsilon^{-2}
    +
    \kappa_y^{1/3}\sigma^2\epsilon^{-4}
  \right)\)
\\

\midrule

Deterministic NC-SC Bilevel
&\citet{chen2025condition}
& \(\mathcal{FO}\)
& \(\mathrm{F}^2\mathrm{BA}^+\)
& \(\widetilde{\mathcal{O}}\!\left(
    \bar{\kappa}_y^{3.5}\epsilon^{-2}
  \right)\)
\\

Deterministic NC-SC Bilevel
&\citet{gu2026sgha}
& \(\mathcal{FO}\)
& SGHA
& \(\mathcal{O}\!\left(
    \bar{\kappa}_y^{5}\epsilon^{-2}
  \right)\)
\\

Deterministic NC-SC Bilevel
&\citet{chen2025condition}
& $\mathcal{FO}$
& Zero-respecting algorithms
& $\Omega(\kappa_y^{2.5}\epsilon^{-2})$
\\

\midrule

Stochastic NC-SC Bilevel
& \citet{chen2025near}
&  \(\mathcal{SFO}\)
& \(\mathrm{F}^3\mathrm{BSA}\)
& \(\widetilde{\mathcal{O}}\!\left(
    \bar\kappa_y^{11}\epsilon^{-6}
  \right)\)
\\

Stochastic NC-SC Bilevel
&\citet{gu2026sgha}
&  \(\mathcal{SFO}\)
& Stoc-SGHA
& \(\mathcal{O}\!\left(
    \bar{\kappa}_y^{17}\epsilon^{-6}
  \right)\)
\\

Stochastic NC-SC Bilevel
& \citet{kwon2024complexity}
& \(\bm{y}^*\)-aware \(\mathcal{SFO}\)
& Adaptive randomized algorithms
& \(\Omega(\epsilon^{-6})\)
\\

Stochastic NC-SC Bilevel
& \citet{chen2025condition}
&  \(\mathcal{SFO}\)
& Zero-respecting algorithms
& \(\Omega(\kappa_y^{4.5}\epsilon^{-4})\)
\\

\rowcolor{ourgray}
Stochastic NC-SC Bilevel
& \textbf{This work}
&  \(\mathcal{SFO}\)
& Adaptive randomized algorithms
& \(\displaystyle
   \Omega\!\left(
     \frac{\Delta\kappa_y^2}{\epsilon^2}
     \max\left\{
       1,
       \frac{\sigma^2\kappa_y^6}{\epsilon^4}
     \right\}
   \right)\)
\\

\bottomrule
\end{tabular}
}
\endgroup

\vspace{2mm}
\begin{minipage}{0.98\linewidth}
\footnotesize
\textbf{Table notes.}
$\mathcal{FO}$ denotes an exact first-order oracle.  A $\mathcal{SFO}$ uses an independent sample at each oracle call.  The
$\bm y^*$-aware oracle of \citep{kwon2024complexity} imposes unbiasedness and bounded variance only locally around $\bm y^*(\bm x)$.  The quantities $\kappa_y$
and $\bar\kappa_y$ denote the lower-level and global condition
numbers, respectively.  Fixed problem parameters are suppressed
unless displayed. For upper bounds, ``Method / Algorithm class'' reports the proposed algorithm; for lower bounds, it reports the class of algorithms covered by the result.  
\end{minipage}
\end{table}

\section{Related Work}
\label{sec:related-work}

\paragraph{Bilevel optimization.}
Bilevel optimization has its roots in hierarchical decision making and
Stackelberg games \citep{stackelberg1934marktform}, and was introduced as a
mathematical-programming model by \citet{bracken1973mathematical}; see
\citet{colson2007overview} for an early survey. 
A classical approach to hypergradient computation is approximate
implicit differentiation (AID).  After approximately solving the
lower-level problem, AID applies the implicit-function theorem and
approximately solves a linear system involving
\(\nabla_{yy}^2 g\), typically using conjugate gradients or a truncated
Neumann series
\citep{domke2012generic,pedregosa2016hyperparameter,
ghadimi2018approximation,grazzi2020iteration}.
Iterative differentiation (ITD), by contrast, unrolls a finite number
of lower-level iterations and differentiates through the resulting
computational graph in either forward or reverse mode
\citep{maclaurin2015gradient,franceschi2017forward,
shaban2019truncated}.  Nonasymptotic analyses of AID and ITD for the
NC-SC setting were developed by \citet{ji2021bilevel}, with subsequent
extensions to stochastic and variance-reduced settings
\citep{arbel2021amortized,dagreou2022framework,
hong2023two}.  Among these methods, stocBio \citep{ji2021bilevel} achieves the best-known HVP complexity of $\widetilde{\mathcal O}(\bar\kappa_y^6\epsilon^{-4})$ under the stochastic NC-SC bilevel setting.  Although these approaches avoid explicitly forming or inverting a Hessian, they usually require Hessian-vector products (HVP), which dominate the computational cost.

Several first-order methods have been developed for bilevel optimization, using only gradients of the upper- and lower-level objectives. Early approaches include value-function and penalty-based methods such as BOME and penalty-based bilevel gradient descent \citep{liu2022bome,shen2023penalty}, followed by related penalty and sequential reformulations \citep{lu2024first,kwon2024penalty}. The F$^2$SA method \citep{kwon2023fully}, based on approximately minimizing a penalized surrogate, achieves \(\tilde{\mathcal O}(\bar\kappa_y^7\epsilon^{-3})\) deterministic complexity and \(\tilde{\mathcal O}(\operatorname{poly}(\bar\kappa_y)\epsilon^{-7})\) stochastic complexity, where \(\bar\kappa_y\) denotes the global condition number. The F$^2$BA/F$^2$BSA framework of \citet{chen2025near} improves these rates. In the deterministic NC-SC setting, F$^2$BA achieves \(\tilde{\mathcal O}(\bar\kappa_y^4\epsilon^{-2})\), and further improved to \(\tilde{\mathcal O}(\bar\kappa_y^{7/2}\epsilon^{-2})\) by F$^2$BA$^+$ \citep{chen2025condition}. In the stochastic NC-SC setting, F$^2$BSA achieves \(\tilde{\mathcal O}(\bar\kappa_y^{12}\epsilon^{-6})\), and further improved to \(\tilde{\mathcal O}(\bar\kappa_y^{11}\epsilon^{-6})\) by F$^3$BSA \citep{chen2025near}.
Thus, the best-known fully first-order methods achieve only an \(\epsilon^{-6}\) dependence under standard stochastic first-order information, in contrast to the \(\epsilon^{-4}\) dependence attainable by HVP-based or variance-reduced methods under stronger smoothness assumptions \citep{ji2021bilevel,arbel2021amortized}.

\paragraph{Lower bounds and zero-chain constructions.}
The modern lower-bound theory for first-order optimization originates
from chain-structured quadratic instances used in convex optimization
\citep{nemirovskij1983problem,nesterov2013introductory}.  The essential
feature of these instances is that first-order information can reveal
new coordinates only sequentially.  \citet{carmon2020lower,
carmon2021lowerII} formalized this mechanism through zero-chains and
developed robust zero-chain constructions for smooth nonconvex
optimization.  Their results include the classical
\(\Omega(\epsilon^{-2})\) lower bound for smooth nonconvex optimization under the first-order oracle, as well as the
\(\Omega(\epsilon^{-12/7})\) and
\(\Omega(\epsilon^{-8/5})\) lower bounds under second-order and
arbitrarily high-order smoothness, respectively.  In stochastic
optimization, \citet{arjevani2023lower} introduced probability-\(p\)
zero-chains together with random rotations and soft projections,
establishing the tight \(\Omega(\epsilon^{-4})\) lower bound under
bounded variance and the \(\Omega(\epsilon^{-3})\) lower bound under
mean-squared smoothness.  Random rotation is also the standard device
for lifting coordinate-based zero-chain arguments beyond a fixed
coordinate system \citep{carmon2020lower,arjevani2023lower}. 

Lower bounds for general bilevel optimization are more recent.
\citet{liang2023lower} established lower bounds for convex-strongly-convex and strongly-convex-strongly-convex bilevel problems under
second-order oracle access.  In finite-sum bilevel empirical-risk
minimization, \citet{dagreou2024lower} derived a matching
finite-sum lower bound, while \citet{chen2024finding} studied
hardness phenomena that arise when the lower-level problem is not
strongly convex.

For stochastic NC-SC bilevel optimization,
\citet{kwon2024complexity} established an
\(\Omega(\epsilon^{-6})\) lower bound under bounded variance and an
\(\Omega(\epsilon^{-4})\) lower bound under an additional
stochastic-smoothness assumption.  Their oracle is \(\bm{y}^*\)-aware,
supplying an \(O(\epsilon)\)-accurate approximation of \(\bm{y}^*(\bm{x})\),
while requiring the stochastic gradients to be unbiased only locally
around \(\bm{y}^*(\bm{x})\).  Consequently, these lower bounds do not directly
characterize the standard globally unbiased stochastic first-order
oracle considered in this paper. More recently, \citet{chen2025condition} developed strengthened
NC-SC bilevel lower bounds under standard first-order oracle models. In particular, they established
\(\Omega(\kappa_y^{2.5}\epsilon^{-2})\) and
\(\Omega(\kappa_y^{4.5}\epsilon^{-4})\) lower bounds for zero-respecting first-order algorithms on a constrained domain under deterministic and stochastic settings, respectively. As a special case of NC-SC bilevel optimization, NC-SC minimax optimization admits the lower bound
$
\Omega\!\left(\sqrt{\kappa_y}\epsilon^{-2}
+\kappa_y^{1/3}\sigma^2\epsilon^{-4}\right)
$
\citep{li2021complexity,zhang2021complexity}. 

Our new lower bound closes the gap between the best-known \(\epsilon^{-6}\) first-order upper bound and the previous corresponding \(\epsilon^{-4}\) lower bound for stochastic NC-SC bilevel optimization. It shows that, under a stochastic first-order oracle, smooth NC-SC bilevel optimization can be strictly harder in its dependence on \(\epsilon\) than NC-SC minimax optimization, smooth nonconvex minimization, and bilevel optimization with access to Hessian-vector products.

\paragraph{Notation.}
Vectors and matrices are denoted by boldface letters.
For a vector $\bm{x}\in\mathbb{R}^d$, its $i$--th coordinate is denoted by
$x_i$, and $\bm{e}_i$ denotes the $i$--th standard basis vector.
For a matrix $\bm{M}$, $\lVert \bm{M}\rVert_{\mathrm{op}}$ denotes its
operator norm.  For a $C^r$ map
$h:\mathbb{R}^m\to\mathbb{R}^q$, we regard its $r$--th derivative
$D^r h(\bm{x})$ as an $r$--linear map and define $\left\lVert D^r h(\bm{x})\right\rVert_{\mathrm{op}}
  \coloneqq
  \sup_{\left\lVert \bm{u}_1\right\rVert=\cdots=
        \left\lVert \bm{u}_r\right\rVert=1}
  \left\lVert
    D^r h(\bm{x})[\bm{u}_1,\ldots,\bm{u}_r]
  \right\rVert$. For a function of two variables $h(\bm{x},\bm{y})$, we use
$\nabla_xh$, $\nabla_yh$, and $\nabla^2_{xy}h$ for the corresponding
partial gradients and Hessian blocks. For $\bm{z}\in\mathbb{R}^T$ and $k\in\{0,\ldots,T\}$, let
$\bm{P}_k\bm{z}=(z_1,\ldots,z_k,0,\ldots,0)$ denote coordinate truncation onto
the first $k$ coordinates, and set $\bm P_k=\bm I_T$ for $k\ge T$.
For a threshold $c\ge0$, define the progress of $\bm{z}$ by $\operatorname{prog}_c(\bm{z})
  \coloneqq
  \max\bigl(
    \{0\}\cup
    \{j\in[T]:\lvert z_j\rvert>c\}
  \bigr)$. We write $\operatorname{St}(d,T)
  \coloneqq
  \left\{
    \bm{U}\in\mathbb{R}^{d\times T}:\bm{U}^\top \bm{U}=\bm{I}_T
  \right\}$ for the Stiefel manifold. 
   If $\bm{U}=(\bm{u}_1,\ldots,\bm{u}_T)\in\operatorname{St}(d,T)$, then its columns $\bm{u}_1,\ldots,\bm{u}_T$ are orthonormal in $\mathbb{R}^d$.

\section{Problem Setup}
\label{sec:problem}
In this section, we introduce the bilevel function class considered in this paper, the algorithm class to which our lower bound applies, and the complexity measurement of the algorithms. 

\subsection{Function Class}
\label{subsec:function-class}
To study the standard smooth nonconvex-strongly-convex (NC-SC) bilevel problem formulated in \eqref{eq:introduction-bilevel}, we introduce the following class of functions.

\begin{definition}[NC-SC bilevel function class]
\label{def:population-class}
Fix $L_f, L_g, \rho, C_f, \mu_g, \Delta>0$.  We write $\mathcal F(L_f, L_g, \rho, C_f, \mu_g, \Delta)$ for the class of pairs $f,g:
  \mathbb R^{d_x}\times\mathbb R^{d_y}
  \to
  \mathbb R$ satisfying the following conditions.
\begin{enumerate}
  \item $\nabla f$ and $\nabla g$ are globally $L_f$-- and
  $L_g$--Lipschitz, respectively.
  \item $g(\bm{x},\cdot)$ is $\mu_g$--strongly convex for every $\bm{x}$.
  \item The full Hessian $\nabla^2g$ is globally $\rho$--Lipschitz.
  \item The lower partial gradient of the upper-level objective is
  globally bounded, namely $\sup_{\bm{x},\bm{y}}\left\lVert \nabla_y f(\bm{x},\bm{y})\right\rVert \le C_f$.
  \item The hyper-objective $\Phi(\bm{x})$ is lower bounded and satisfies $\Phi(\bm{0})-\inf_{\bm{x}}\Phi(\bm{x})\le\Delta$.
\end{enumerate}
\end{definition}


For the proof, we use two notions of condition numbers. The intrinsic condition number is determined by the actual lower-level Hessian, whereas the global condition number is defined by the parameters specifying the function class.

\begin{definition}[Condition numbers]
\label{def:condition-numbers}
For a pair $(f,g)$ in Definition~\ref{def:population-class}, define
the intrinsic lower-level curvature modulus and lower-level
smoothness modulus by
\begin{equation*}
  \mu_g^{\rm act}
  \coloneqq
  \inf_{\bm{x},\bm{y}}
  \lambda_{\min}\!\left(\nabla^2_{yy}g(\bm{x},\bm{y})\right),
  \qquad
  L_y
  \coloneqq
  \sup_{\bm{x},\bm{y}}
  \left\lVert
    \nabla^2_{yy}g(\bm{x},\bm{y})
  \right\rVert_{\mathrm{op}}.
  \label{eq:actual-lower-constants}
\end{equation*}
Define the aggregate Lipschitz constant by $\bar L
  \coloneqq
  \max\left\{
    C_f,L_f,L_g,\rho
  \right\}$. The intrinsic lower-level and global condition
numbers are, respectively,
\begin{equation*}
  \kappa_y
  \coloneqq
  \frac{L_y}{\mu_g^{\rm act}},
  \qquad
  \bar\kappa_y
  \coloneqq
  \frac{\bar L}{\mu_g}.
  \label{eq:condition-number-definitions}
\end{equation*}
\end{definition}

One always has $\kappa_y\le \bar\kappa_y$. For the hard instances constructed below, the intrinsic lower-level smoothness satisfies $L_y=\Theta(1)$ and the intrinsic curvature modulus satisfies $\mu_g^{\rm act}=\Theta(\kappa^{-1})$.
Consequently, the intrinsic condition number satisfies $\kappa_y=\Theta(\kappa)$.
Our lower bound is stated in terms of this intrinsic condition number.

\subsection{Algorithm Class and Oracle Model}
\label{subsec:algorithm-class}

We consider randomized adaptive algorithms under the stochastic first-order oracle ($\mathcal{SFO}$).  At each iteration, the next query may be
any measurable function of the algorithm's internal randomness and
the previous query-response history.  Let
\[
  \mathsf W
  \coloneqq
  \mathbb R^{d_x}\times\mathbb R^{d_y}
\]
be the query space, and let
\[
  \mathsf Y
  \coloneqq
  \mathbb R^{d_x+d_y}\times\mathbb R^{d_x+d_y}
\]
be the joint oracle-response space.  For any $t\ge0$, define the
space of length-$t$ transcripts by
\[
  \mathsf T_t
  \coloneqq
  (\mathsf W\times\mathsf Y)^t,
  \qquad
  \mathsf T_0
  \coloneqq
  \{\varnothing\}.
\]

\begin{definition}[Randomized adaptive algorithm]
\label{def:algorithm-class}
Fix an oracle-call budget $N\in\mathbb N_0$.  A randomized adaptive algorithm $\mathsf A$ consists of a random seed
$\omega_{\rm alg}$ taking values in a standard Borel probability
space, a sequence of Borel query maps
\begin{equation*}
  \operatorname{Query}_t^{\mathsf A}
  :
  \Omega_{\rm alg}\times\mathsf T_{t-1}
  \longrightarrow
  \mathsf W,
  \qquad
  1\le t\le N,
  \label{eq:algorithm-query-map}
\end{equation*}
and a Borel output map
\begin{equation*}
  \operatorname{Output}_N^{\mathsf A}
  :
  \Omega_{\rm alg}\times\mathsf T_N
  \longrightarrow
  \mathbb R^{d_x}.
  \label{eq:algorithm-output-map}
\end{equation*}

Given an oracle, the interaction is defined recursively.  Starting
from the empty transcript $\mathcal T_0=\varnothing$, at call $t$ the
algorithm selects
\begin{equation*}
  \bm{w}_t=(\bm{x}_t,\bm{y}_t)
  =
  \operatorname{Query}_t^{\mathsf A}
  (\omega_{\rm alg},\mathcal T_{t-1}),
  \label{eq:algorithm-query-recursion}
\end{equation*}
receives an oracle response $\bm{Y}_t\in\mathsf Y$, and forms
\begin{equation*}
  \mathcal T_t
  =
  \bigl(
    \mathcal T_{t-1},\bm{w}_t,\bm{Y}_t
  \bigr)
  \in
  \mathsf T_t.
  \label{eq:original-transcript-recursion}
\end{equation*}
After $N$ calls, the algorithm returns
\begin{equation*}
  \widehat{\bm{x}}_N^{\mathsf A}
  =
  \operatorname{Output}_N^{\mathsf A}
  (\omega_{\rm alg},\mathcal T_N).
  \label{eq:algorithm-output-recursion}
\end{equation*}
The pre-query information before call $t$ is the sigma-field
\begin{equation*}
  \mathscr F_{t-1}
  \coloneqq
  \sigma\bigl(
    \omega_{\rm alg},\mathcal T_{t-1}
  \bigr).
  \label{eq:prequery-filtration}
\end{equation*}
Thus $\bm{w}_t$ is $\mathscr F_{t-1}$-measurable.  We denote by $\mathcal A_N^{\mathcal{SFO}}(d_x,d_y)$ the class of all such algorithms making at most $N$ stochastic first-order oracle calls.
When the dimensions are clear, we simply write
$\mathcal A_N^{\mathcal{SFO}}$.
\end{definition}

An algorithm that stops before call $N$ is included in
$\mathcal A_N^{\mathcal{SFO}}$ by allowing its output map to ignore
all subsequent oracle responses.  No zero-respecting, linear-span, or
other structural restriction is imposed on the query maps.

We next introduce the stochastic first-order oracle.
\begin{definition}[Stochastic first-order oracle ($\mathcal{SFO}$)]
\label{def:sfo}
At a query $(\bm{x}_t,\bm{y}_t)$, a joint stochastic first-order oracle
returns
\begin{equation*}
  \bm{Y}_t
  =
  \mathbb O^{\mathcal{SFO}}
  (\bm{x}_t,\bm{y}_t;\zeta_t,\xi_t)
  =
  \bigl(
    \bm{G}_f(\bm{x}_t,\bm{y}_t;\zeta_t),
    \bm{G}_g(\bm{x}_t,\bm{y}_t;\xi_t)
  \bigr).
  \label{eq:sfo-response}
\end{equation*}
The fresh sample pair $(\zeta_t,\xi_t)$ is independent of the
pre-query sigma-field $\mathscr F_{t-1}$.  At every possibly random
query generated by an algorithm in
$\mathcal A_N^{\mathcal{SFO}}$, the oracle satisfies
\[
  \begin{aligned}
    \mathbb E\!\left[
      \bm{G}_f(\bm{x}_t,\bm{y}_t;\zeta_t)
      \,\middle|\,
      \mathscr F_{t-1}
    \right]
    &=
    \nabla f(\bm{x}_t,\bm{y}_t),
    &
    \mathbb E\!\left[
      \left\lVert
        \bm{G}_f(\bm{x}_t,\bm{y}_t;\zeta_t)-\nabla f(\bm{x}_t,\bm{y}_t)
      \right\rVert^2
      \,\middle|\,
      \mathscr F_{t-1}
    \right]
    &\le
    \sigma^2,
    \\
    \mathbb E\!\left[
      \bm{G}_g(\bm{x}_t,\bm{y}_t;\xi_t)
      \,\middle|\,
      \mathscr F_{t-1}
    \right]
    &=
    \nabla g(\bm{x}_t,\bm{y}_t),
    &
    \mathbb E\!\left[
      \left\lVert
        \bm{G}_g(\bm{x}_t,\bm{y}_t;\xi_t)-\nabla g(\bm{x}_t,\bm{y}_t)
      \right\rVert^2
      \,\middle|\,
      \mathscr F_{t-1}
    \right]
    &\le
    \sigma^2.
  \end{aligned}
\]
\end{definition}
We write $\mathbb O_\sigma^{\mathcal{SFO}}(f,g)$ for the collection of all
such oracles for the function pair $(f,g)$.

\subsection{Worst-Case Oracle Complexity}
\label{subsec:complexity-measure}

Since the hyper-objective $\Phi$ in
\eqref{eq:introduction-bilevel} is nonconvex, global
optimization may require exponentially many oracle calls in the worst
case \citep{nemirovskij1983problem}.  We therefore use the standard
first-order stationarity criterion from smooth nonconvex optimization
\citep{carmon2020lower,carmon2021lowerII,arjevani2023lower}.

\begin{definition}[$\epsilon$-stationarity]
\label{def:epsilon-stationary}
A possibly random point $\widehat{\bm{x}}\in\mathbb{R}^{d_x}$ is
called an $\epsilon$-stationary output of $\Phi$ if
\begin{equation}
  \mathbb{E}
  \left\lVert
    \nabla\Phi(\widehat{\bm{x}})
  \right\rVert
  \le \epsilon,
  \label{eq:epsilon-stationary}
\end{equation}
where the expectation is taken over all randomness of the algorithm
and the stochastic oracle.
\end{definition}

For a function class
$\mathcal F$ and an associated family of admissible stochastic
oracles $\mathbb O_\sigma^{\mathcal{SFO}}$, define the worst-case risk after $N$ oracle
calls by
\begin{align}
  \mathfrak R_N^{\mathcal{SFO}}
  \bigl(
    \mathcal A^{\mathcal{SFO}},
    \mathcal F,
    \mathbb O_\sigma^{\mathcal{SFO}}
  \bigr)
  \coloneqq
  \inf_{\mathsf A\in\mathcal A_N^{\mathcal{SFO}}}
  \sup_{(f,g)\in\mathcal F}
  \sup_{\mathbb O\in
    \mathbb O_\sigma^{\mathcal{SFO}}(f,g)}
  \mathbb{E}_{\mathsf A,\mathbb O}
  \left\lVert
    \nabla\Phi
    \bigl(
      \widehat{\bm{x}}_N^{\mathsf A}
    \bigr)
  \right\rVert .
  \label{eq:worst-case-risk}
\end{align}
Here $\widehat{\bm{x}}_N^{\mathsf A}$ denotes the output of
$\mathsf A$ after at most $N$ oracle calls.

\begin{definition}[Worst-case oracle complexity]
\label{def:worst-case-complexity}
The worst-case $\mathcal{SFO}$ complexity for finding an
$\epsilon$-stationary point is
\begin{equation}
  \operatorname{Compl}_{\epsilon}^{\mathcal{SFO}}
  (\mathcal A^{\mathcal{SFO}}, \mathcal F,\mathbb O_\sigma^{\mathcal{SFO}})
  \coloneqq
  \inf
  \left\{
    N\in\mathbb{N}_0:
    \mathfrak R_N^{\mathcal{SFO}}(\mathcal A^{\mathcal{SFO}}, \mathcal F,\mathbb O_\sigma^{\mathcal{SFO}})
    \le \epsilon
  \right\}.
  \label{eq:worst-case-complexity}
\end{equation}
\end{definition}

Definition \ref{def:worst-case-complexity} follows the uniform minimax convention: a single
algorithm is chosen before the worst-case objective function and
stochastic oracle are selected.  This quantifier order is consistent
with our lower-bound statement, which states that for every
randomized adaptive algorithm there exist an instance and
an admissible oracle on which the algorithm requires the
claimed number of oracle calls.

\section{Lower Bound for Stochastic NC-SC Bilevel Optimization}
\label{sec:lower-bound}

In this section, we first construct and analyze the hard instance for a fixed
orthonormal frame, including the parameter choice.  We then randomize
the frame to obtain the progress bound for randomized adaptive
algorithms.  The proofs are given in the appendices.

\subsection{Main Result}
\label{subsec:main-result-overview}

\begin{theorem}
\label{thm:main}
There are universal positive constants
$c_{\rm LB}, c_\epsilon,
  \bar L_f, \bar L_g,
  \bar\rho$ with the following property.  Fix any
$\Delta>0$.  There is a finite constant $\bar C_f(\Delta)>0$,
depending only on $\Delta$, such that, for every $\kappa\ge2$,
$\sigma\ge0$, and $\epsilon$ satisfying
\begin{equation}
  0<\epsilon\le c_\epsilon\min\{1,\sqrt\Delta\},
  \label{eq:epsilon-range-main}
\end{equation}
and every $N\in\mathbb N_0$ satisfying
\begin{equation}
  N\le
  c_{\rm LB}\frac{\Delta\kappa^2}{\epsilon^2}
  \max\left\{1,
    \frac{\sigma^2\kappa^6}{\epsilon^4}
  \right\},
  \label{eq:main-call-budget}
\end{equation}
there exists a finite dimension $d$ such that, for every $\mathsf A
  \in
  \mathcal A_N^{\mathcal{SFO}}(d,d+2)$, there exist a pair $(f,g)\in
  \mathcal F(\bar L_f,\bar L_g,
  \bar\rho,\bar C_f(\Delta),(2\kappa)^{-1},\Delta)$ and an oracle $\mathbb O
  \in
  \mathbb O_\sigma^{\mathcal{SFO}}(f,g)$ such that the output $\widehat{\bm{x}}_N^{\mathsf A}$ obeys
\begin{equation}
  \mathbb E_{\mathsf A,\mathbb O}
  \left\lVert
    \nabla\Phi(\widehat{\bm{x}}_N^{\mathsf A})
  \right\rVert
  >\frac32\epsilon,
  \qquad
  \mathbb E_{\mathsf A,\mathbb O}
  \left\lVert
    \nabla\Phi(\widehat{\bm{x}}_N^{\mathsf A})
  \right\rVert^2
  >\frac94\epsilon^2.
  \label{eq:main-failure}
\end{equation}
In particular, 
\[
  \mathfrak R_N^{\mathcal{SFO}}
  \bigl(
    \mathcal A^{\mathcal{SFO}},
    \mathcal F(\bar L_f,\bar L_g,
      \bar\rho,\bar C_f(\Delta),(2\kappa)^{-1},\Delta),
    \mathbb O_\sigma^{\mathcal{SFO}}
  \bigr)
  \ge\frac32\epsilon>\epsilon.
\]
Finally, as the intrinsic condition number of the constructed lower problem satisfies
$\kappa\le\kappa_y\le14\kappa$, the complexity bound can equivalently be expressed as
\begin{equation}
  \Omega\!\left(
    \frac{\Delta\kappa_y^2}{\epsilon^2}
    \max\left\{1,
      \frac{\sigma^2\kappa_y^6}{\epsilon^4}
    \right\}
  \right).
  \label{eq:main-kappa-y-form}
\end{equation}
\end{theorem}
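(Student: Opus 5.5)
We will embed the probability-$p$ zero-chain of \citet{arjevani2023lower} (built on the Carmon et al.\ chain $\bar F_T(\bm z)=-\Psi(1)\Phi(z_1)+\sum_{i=2}^T[\Psi(-z_{i-1})\Phi(-z_i)-\Psi(z_{i-1})\Phi(z_i)]$) into a bilevel pair so that $\Phi(\bm x)=\lambda\,\bar F_T(\bm U^\top \bm x/\beta)$ for a rotation $\bm U\in\operatorname{St}(d,T)$ and scalings $\lambda,\beta$. We do not use the full chain on the upper level. The term that carries the incoming coordinate is separated by a smooth progress indicator, in the spirit of the soft-progress gate $\Theta_i$ in \citet{arjevani2023lower}, and moved into the lower level. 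Concretely, the lower level gets a two-dimensional block $g_2(\bm x,y_1,y_2)=\tfrac12 \bm y^\top \bm H\bm y-\kappa^{-1}y_1\,h(\bm x)$, where $\bm H$ has eigenvalues in $[(2\kappa)^{-1},O(1)]$ and is chosen so that $(\bm H^{-1})_{21}=\Theta(\kappa)$. Here $h$ is the incoming-coordinate term. Then $y_2^*(\bm x)=(\bm H^{-1})_{21}\kappa^{-1}h(\bm x)=\Theta(1)\,h(\bm x)$.

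The upper level is built as follows. It contains $\bm y_2$ linearly, so that $\Phi$ recovers $h$ with an $O(1)$ coefficient, plus the deterministic remainder of the chain and a compensation term cancelling the extra $y_1$ contributions. With this choice $\Phi$ equals the scaled zero-chain exactly. The remaining $d$ coordinates of $\bm y$ are an identity quadratic, which makes the dimension $d+2$. The stochastic lower-level gradient multiplies the $\kappa^{-1}h$ part by $z/p$ with $z\sim\mathrm{Bernoulli}(p)$. This gives variance $O(\kappa^{-2}\lambda^2\beta^{-2}(1-p)/p)$, while the upper-level oracle is exact. Unbiasedness is immediate. Because of the indicator, a failed Bernoulli sample reveals nothing about coordinate $\operatorname{prog}+1$, so the pair is a probability-$p$ zero-chain in the sense of \citet{arjevani2023lower}.

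Next we verify membership in $\mathcal F$. The quantities $L_f,L_g,\rho$ are $O(1)$ because $\lambda/\beta^2=O(1)$ after absorbing constants. $C_f$ depends on $\Delta$ through $\lambda/\beta\cdot\sqrt T$-type bounds, and this is the source of $\bar C_f(\Delta)$. Strong convexity holds with modulus $(2\kappa)^{-1}$, and $\kappa\le\kappa_y\le14\kappa$ follows from the explicit spectrum of $\bm H$. The parameters are then fixed by standard chain facts: $\|\nabla \bar F_T(\bm z)\|>1$ whenever $\operatorname{prog}_{1/2}(\bm z)<T$, and $\bar F_T(0)-\inf\bar F_T\le 12T$. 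Choosing $\lambda/\beta\asymp\epsilon$ forces $\|\nabla\Phi\|>2\epsilon$ before full progress. The smoothness of the hypergradient scales like $\kappa^2\lambda/\beta^2$, since differentiating $y^*$ adds factors of $\kappa$. Normalizing this gives $\beta\asymp\kappa^2\epsilon$, hence $\lambda\asymp\kappa^2\epsilon^2$ and $T\asymp\Delta/\lambda$. We would need to calibrate this so that $T=\Theta(\Delta\kappa^2\epsilon^{-2})$, making sure the $\kappa$-dependence of the scalings lands on $\kappa^2$; I expect the exact exponent bookkeeping here to be delicate. The variance constraint gives $p^{-1}\asymp\max\{1,\sigma^2\kappa^6\epsilon^{-4}\}$, obtained by substituting $\lambda/\beta$ and the $\kappa^{-1}$ lower-level coefficient together with the $\kappa$ growth of the Lipschitz scaling.

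For arbitrary adaptive randomized algorithms, we draw $\bm U$ uniformly from $\operatorname{St}(d,T)$ with $d\gtrsim T^2\log(T/\text{fail})\cdot(\text{norm bound})$ and replace queries by a soft projection, as in \citet{arjevani2023lower}, so that all iterates have bounded norm. The random-rotation lemma then shows that with high probability every query has $|\langle \bm u_j,\bm x_t\rangle|<1/2$ for undiscovered $j$. Consequently, progress increases only on Bernoulli successes. A Chernoff bound shows that after $N\le T/(2p)$ calls progress stays below $T$ with probability at least $1/2$. On that event $\|\nabla\Phi(\widehat{\bm x})\|>2\epsilon$, so in expectation it exceeds $\tfrac32\epsilon$, and the second-moment bound follows by Jensen. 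The main obstacle is the rotation step: the gate and the compensation term involve $\bm x$ through $\bm U$, and the lower-level oracle must still leak no information about $\bm u_{\operatorname{prog}+1}$ on failures, even through the $\bm y$-coordinates. We would handle this by letting $h$ depend only on $\bm U^\top\rho(\bm x)$ and checking that the response is a measurable function of $\bm u_1,\ldots,\bm u_{\operatorname{prog}}$ alone, which gives the zero-chain property.
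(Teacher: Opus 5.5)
The probabilistic part of your plan (Haar frame, soft projection, Bernoulli counting, Jensen) matches the paper. The construction you describe, however, cannot deliver the claimed rate, and the two ingredients it lacks are exactly the source of $\kappa^8\epsilon^{-6}$.

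\emph{The $\kappa^2$ in $T$.} You place the chain directly on the upper variable, $\Phi(\bm x)=\lambda\bar F_T(\bm U^\top\bm x/\beta)$, with the deterministic remainder inside $f$ and the other $d$ lower coordinates a dummy quadratic. Then $\nabla^2_{xx}f$ contains the chain Hessian, so $L_f=O(1)$ forces $\lambda/\beta^2=O(1)$. Combined with $\lambda/\beta\asymp\epsilon$ and $\lambda T\lesssim\Delta$, this caps $T\lesssim\Delta\epsilon^{-2}$, leaving no room for $\kappa^2$. Your calibration goes the wrong way: normalizing a ``hypergradient smoothness'' $\kappa^2\lambda/\beta^2$ gives $\lambda\asymp\kappa^2\epsilon^2$ and hence $T\asymp\Delta\kappa^{-2}\epsilon^{-2}$. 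That normalization is not needed anyway, since $\mathcal F$ constrains $\nabla f,\nabla g,\nabla^2 g,\nabla_y f$, not $\nabla\Phi$.

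The missing idea is amplification through the lower level. Put the chain on a lower block $\bm z\in\mathbb R^d$ with $g\ni\frac1{2\kappa}\lVert\bm z\rVert^2-\langle\bm x,\bm z\rangle$. Then $\bm z^*(\bm x)=\kappa\bm x$ and $\Phi(\bm x)=H_{\eta,T}(\bm U^\top\rho_R(\kappa\bm x))+h_R(\kappa\bm x)$. Now $f$ and $g$ stay $O(1)$-smooth while $\lVert\nabla\Phi\rVert>\kappa\eta/2$, so $\eta=4\epsilon/\kappa$ and $T\asymp\Delta\kappa^2\epsilon^{-2}$. This is also why $\bar C_f$ depends on $\Delta$: $\nabla_{\bm z}f$ carries a chain gradient of size $\eta\sqrt T$. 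Because the incoming term then depends on a lower variable, the compensation $\frac{d_\kappa}{2}\vartheta_{\kappa,\bm U}^2$ must sit in $g$, not in $f$. That way, minimizing the $\bm v$-block contributes exactly zero and does not move $\bm z^*$.

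\emph{The $\epsilon^{-4}$ in $p^{-1}$.} Your variance bound $O(\kappa^{-2}\lambda^2\beta^{-2}(1-p)/p)$ is the squared chain-gradient scale divided by $\kappa^2$, i.e.\ of order $\kappa^{-2}\epsilon^2/p$. That gives $p^{-1}\asymp\sigma^2\kappa^2\epsilon^{-2}$ and an overall $\epsilon^{-4}$ bound; the asserted $p^{-1}\asymp\sigma^2\kappa^6\epsilon^{-4}$ does not follow. Moreover, $-\kappa^{-1}y_1h(\bm x)$ is not admissible as written. Its $\bm x$-Hessian and the $\bm x$-part of the noise grow linearly in $\lvert y_1\rvert$, so $\nabla g$ is not globally Lipschitz and the variance is unbounded at arbitrary queries.

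The paper fixes both problems by coupling through $-\vartheta_{\kappa,\bm U}(\bm z)\psi_\eta(v_1)$, with $\psi_\eta$ cut off at the coordinate scale $\eta$ and still linear near $v_1^*=O(\eta^2)$. Both noise components, $(d_\kappa\vartheta-\psi_\eta)D\vartheta$ and $\vartheta\psi_\eta'$, are then $O(\eta^2/\kappa)$: the value scale rather than the gradient scale. The hypergradient still sees $\Theta(\kappa\eta)$ through the Hessian coupling $\psi_\eta'D\vartheta=O(\eta/\kappa)$, amplified by $(\bm M_\kappa^{-1})_{21}=s_\kappa$ and by $\bm z^*=\kappa\bm x$. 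This yields $p^{-1}\asymp\sigma^2\kappa^2\eta^{-4}\asymp\sigma^2\kappa^6\epsilon^{-4}$.

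Two smaller points:
\begin{itemize}
\item Probability $1/2$ of unfinished progress times $2\epsilon$ gives only $\epsilon$, not $\frac32\epsilon$. The combined Bernoulli and rotation failure probability must be at most $1/4$.
\item With a soft projection you also need the pseudo-Huber term $h_R$. Otherwise $D\rho_R$ degenerates for large $\lVert\bm z\rVert$ and the chain gradient alone no longer lower-bounds $\lVert\nabla\Phi\rVert$.
\end{itemize}
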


By Theorem \ref{thm:main}, the noise-dominated term is $
\Omega\!\left(\Delta \sigma^2 \kappa_y^8 \epsilon^{-6}\right)$.
To our knowledge, this is the first $\epsilon^{-6}$ complexity lower
bound for smooth stochastic NC-SC bilevel optimization under the
standard stochastic first-order oracle, showing that the
$\epsilon$-dependence of the existing upper bounds is optimal
\citep{chen2025near, gu2026sgha}. When $\sigma=0$, the same construction
yields a deterministic lower bound
\(
\Omega\!\left(\Delta \kappa_y^2 \epsilon^{-2}\right)
\)
for randomized algorithms. This is slightly weaker in its dependence
on $\kappa_y$ than the deterministic lower bound of
\citet{chen2025condition}, which establishes the bound $\Omega\!\left(\Delta \kappa_y^{2.5} \epsilon^{-2}\right)$.

We emphasize that the theorem is high-dimensional. Once
the parameters and the oracle-call budget \(N\) are fixed, the ambient
dimension \(d\) is chosen sufficiently large, uniformly over all
algorithms in
\(\mathcal A_N^{\mathcal{SFO}}(d,d+2)\). A sufficient choice of \(d\)
is given in \eqref{eq:dimension-choice}. Thus, the theorem does not
assert an $\epsilon^{-6}$ lower bound in any fixed finite dimension.

The proof has an analytic part and a probabilistic part.  The analytic part shows that the hypergradient is large whenever the coordinate progress is below $T$. The probabilistic part shows that, below the stated call budget, the coordinate progress remains below $T$ with constant probability. The hard instance is first defined relative to a fixed orthonormal frame. $\eta$ sets the scale of the rescaled zero-chain: each chain coordinate contributes $O(\eta^2)$ to the initial gap, while the corresponding gradient scale is $O(\eta)$.  $T$ is the number of chain coordinates, and $p$ is the success probability of the fresh Bernoulli variable that permits dependence on the next relevant
coordinate.  The random rotation hides the fixed frame and is used only in the probabilistic progress argument.

We choose $T=\Theta(\Delta/\eta^2)$, which uses the initial-gap budget
up to numerical constants.  The stochastic lower-level oracle
randomizes a compensated perturbation inside a two-dimensional
strongly convex quadratic block, with gradient norm $O(\eta^2/\kappa)$.  After Bernoulli importance
weighting, its contribution to the oracle variance is bounded by
$O(\eta^4/(p\kappa^2))$.  We therefore choose $p$ as $p^{-1}=\Theta\!\left(\max\{1,\sigma^2\kappa^2\eta^{-4}\}\right)$, and further choose $\eta=\Theta(\epsilon/\kappa)$ so that the hypergradient at
any point with coordinate progress below $T$ exceeds $\epsilon$.
Combining these three relations with the $T/p$ query bound for the
probability-$p$ zero-chain gives the
$\kappa^8\epsilon^{-6}$ term in the noise-dominated regime.  The
random rotation affects the dimension required for the progress
argument but does not change this parameter scaling.

\subsection{Hard Instance Construction}
\label{sec:construction}
In the hard bilevel instance constructed below, the lower variable is denoted by
$\bm{y}=(\bm{z},\bm{v})\in\mathbb{R}^d\times\mathbb{R}^2$.  The
$\bm z$-block yields $\bm z^*(\bm x)=\kappa\bm x$.  The
two-dimensional $\bm v$-block scales the gradient contribution of
the compensated term by
$\Theta(\kappa^{-1})$, while preserving
the corresponding term in the hyper-objective after the lower problem
is minimized.

Throughout this section, every symbol of the form $C_{\bullet}$
denotes a fixed numerical constant independent of
$T,d,\bm U,\eta,\kappa,\Delta,\sigma$, and $\epsilon$.  The class bound
$\bar C_f(\Delta)$ is allowed to depend on $\Delta$, as stated in
Lemma~\ref{lem:regularity}.

\subsubsection{Scaled Nonconvex Zero-chain}

Following the zero-chain construction of \cite{carmon2020lower}, define
\begin{equation}
  \Psi(t)=
  \begin{cases}
    0,&t\le\frac12,\\[1mm]
    \exp\!\left(1-\dfrac{1}{(2t-1)^2}\right),&t>\frac12,
  \end{cases}
  \qquad
  \Phi_0(t)=\sqrt e\int_{-\infty}^t e^{-s^2/2}\,ds,
  \label{eq:Psi-Phi}
\end{equation}
and
\begin{equation}
  Q(t_1,t_2)=\Psi(-t_1)\Phi_0(-t_2)-\Psi(t_1)\Phi_0(t_2).
  \label{eq:Q-definition}
\end{equation}
For $\bm{a}=(a_1,\ldots,a_T)$, set $a_0=1$ and let
\begin{equation}
  F_T(\bm{a})=\sum_{i=1}^T Q(a_{i-1},a_i).
  \label{eq:unscaled-chain}
\end{equation}
Given $0<\eta\le1$, set $z_0=\eta$ and define
\begin{equation}
  H_{\eta,T}(\bm{z})
  =\eta^2F_T(\bm{z}/\eta)
  =\eta^2\sum_{i=1}^T\mathcal{L}_i^\eta(\bm{z}),
  \qquad
  \mathcal{L}_i^\eta(\bm{z})=Q(z_{i-1}/\eta,z_i/\eta).
  \label{eq:scaled-chain}
\end{equation}

The function $H_{\eta,T}$ has a sequential dependence structure.  Its
$i$th summand depends on coordinates $i-1$ and $i$, and for $i\ge2$ this
summand and all of its derivatives vanish when
$\lvert z_{i-1}\rvert\le\eta/2$.  The quantity
$\operatorname{prog}_\eta(\bm z)$ is the largest index whose
coordinate has magnitude greater than $\eta$.  If
$j=\operatorname{prog}_\eta(\bm z)+1\le T$, then
Lemma~\ref{lem:standard-chain} gives
$\lvert\partial_jH_{\eta,T}(\bm z)\rvert\ge\eta$.
We use standard properties of the smooth nonconvex zero-chain from
\citet{carmon2020lower} and
\citet{arjevani2023lower}; see, e.g., the derivative and zero-chain
bounds around Lemmas 2--3 and 6--7 of \citet{arjevani2023lower}.
We state the scaled form needed here and omit the proof.

\begin{lemma}[
\citep{arjevani2023lower}]
\label{lem:standard-chain}
There are fixed numerical constants
$C_\Delta,C_{{\rm ch},2},C_{{\rm ch},3}>0$ such that, for all
$T\ge1$, $0<\eta\le1$, and $\bm{z}\in\mathbb{R}^T$,
\begin{equation}
    H_{\eta,T}(\bm{0})-\inf_{\bm{q}\in\mathbb{R}^T}H_{\eta,T}(\bm{q})
    \le C_\Delta\eta^2T,
\end{equation}
and
\begin{alignat}{2}
  &\left\lVert
    \nabla H_{\eta,T}(\bm{z})
  \right\rVert_\infty
  \le 23\eta,
  &&\quad
  \left\lVert
    \nabla H_{\eta,T}(\bm{z})
  \right\rVert
  \le 23\eta\sqrt T,
  \label{eq:chain-gradient-bound}
  \\
  &\left\lVert
    D^2H_{\eta,T}(\bm{z})
  \right\rVert_{\mathrm{op}}
  \le C_{{\rm ch},2},
  &&\quad
  \left\lVert
    D^3H_{\eta,T}(\bm{z})
  \right\rVert_{\mathrm{op}}
  \le \frac{C_{{\rm ch},3}}{\eta}.
  \label{eq:chain-higher-bounds}
\end{alignat}
If $j=\operatorname{prog}_\eta(\bm{z})+1\le T$, then
\begin{equation}
  \partial_jH_{\eta,T}(\bm{z})\le-\eta.
  \label{eq:chain-frontier-gradient}
\end{equation}
Finally, if $i\ge2$ and $\left\lvert z_{i-1}\right\rvert \le\eta/2$, then
$\mathcal{L}_i^\eta$ and all its derivatives vanish at $\bm{z}$.
\end{lemma}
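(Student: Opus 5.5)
The lemma is the scaled form of the standard zero-chain facts of \citet{carmon2020lower} and \citet{arjevani2023lower}. The plan is to reduce everything to the unscaled chain $F_T$ through the identity $H_{\eta,T}(\bm z)=\eta^2F_T(\bm z/\eta)$, and then invoke the known unscaled bounds.

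Differentiating the identity $r$ times gives
\[
D^rH_{\eta,T}(\bm z)=\eta^{2-r}D^rF_T(\bm z/\eta).
\]
The first key step is to recall the unscaled bounds for $F_T$:
\begin{itemize}
\item $F_T(\bm 0)-\inf F_T\le 12T$;
\item $\lVert\nabla F_T\rVert_\infty\le 23$, hence $\lVert\nabla F_T\rVert\le 23\sqrt T$;
\item $\lVert D^2F_T\rVert_{\rm op}\le\ell_2$ and $\lVert D^3F_T\rVert_{\rm op}\le\ell_3$ for numerical constants $\ell_2,\ell_3$.
\end{itemize}
Plugging these into the derivative identity gives each claimed bound directly:
\begin{itemize}
\item the gap becomes $\eta^2\cdot 12T$, so $C_\Delta=12$;
\item the gradient bounds become $23\eta$ and $23\eta\sqrt T$;
\item the Hessian bound becomes $\ell_2$, since $\eta^0=1$;
\item the third-derivative bound becomes $\ell_3/\eta$.
\end{itemize}
The infimum over $\bm q$ is scale-invariant because $\bm q\mapsto\bm q/\eta$ is a bijection. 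The convention $z_0=\eta$ matches $a_0=1$ under this rescaling.

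For the frontier statement, note that $\operatorname{prog}_\eta(\bm z)=\operatorname{prog}_1(\bm z/\eta)$. Moreover,
\[
\partial_jH_{\eta,T}(\bm z)=\eta\,\partial_jF_T(\bm z/\eta).
\]
It therefore suffices to show that $\partial_jF_T(\bm a)\le-1$ whenever $j=\operatorname{prog}_1(\bm a)+1\le T$. Here $\lvert a_j\rvert\le1$, and $\lvert a_{j+1}\rvert\le 1$ if $j<T$. The derivative is
\[
\partial_jF_T=\partial_2Q(a_{j-1},a_j)+\partial_1Q(a_j,a_{j+1}).
\]
The second term vanishes because $\Psi'(\pm a_j)=0$ whenever $\lvert a_j\rvert\le1/2$; when $1/2<\lvert a_j\rvert\le1$ it must be tracked together with the first term. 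For the first term, the unscaled analysis of \citet{carmon2020lower} (their Lemma 2 and its stochastic variant in \citet{arjevani2023lower}) gives the bound $-\Psi(-a_{j-1})\Phi_0'(-a_j)-\Psi(a_{j-1})\Phi_0'(a_j)$. When $j-1=\operatorname{prog}_1$, or $j=1$ with $a_0=1$, we have $\lvert a_{j-1}\rvert>1$. Then $\Psi(\lvert a_{j-1}\rvert)>\Psi(1)=1$ and $\Phi_0'(\pm a_j)\ge\sqrt e\,e^{-1/2}=1$, so this term is at most $-1$. The main obstacle is the careful bookkeeping of the $\partial_1Q(a_j,a_{j+1})$ term in the range $1/2<\lvert a_j\rvert\le1$. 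For that step I would cite the existing lemma rather than redo the case analysis, which is why the statement omits the proof.

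Finally, the vanishing property is immediate. Since $\Psi$ and all its derivatives vanish on $(-\infty,1/2]$, both $\Psi(\pm z_{i-1}/\eta)$ vanish to all orders whenever $\lvert z_{i-1}\rvert\le\eta/2$. Hence $\mathcal L_i^\eta$ and all its derivatives vanish at $\bm z$.
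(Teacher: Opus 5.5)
Your argument is correct and is essentially the paper's own: the paper states this lemma as a citation of the unscaled zero-chain facts of Carmon et al.\ and Arjevani et al., transfers them via $D^rH_{\eta,T}(\bm z)=\eta^{2-r}D^rF_T(\bm z/\eta)$, and omits the proof. Two small corrections on the frontier step. First, the term you flag as the main obstacle is harmless: $\partial_1Q(t_1,t_2)=-\Psi'(-t_1)\Phi_0(-t_2)-\Psi'(t_1)\Phi_0(t_2)\le0$ everywhere because $\Psi'\ge0$ and $\Phi_0>0$, so no case analysis is needed. Second, for $j=1$ you have $|a_0|=1$, not $|a_0|>1$; the bound still holds, since $\Psi(1)=1$ and $\Phi_0'(a_1)\ge1$ give $\partial_1F_T\le-1$.
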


By \eqref{eq:scaled-chain}, for every $r\ge0$,
\begin{equation}
  D^rH_{\eta,T}(\bm{z})
  =\eta^{2-r}D^rF_T(\bm{z}/\eta).
  \label{eq:chain-scaling-identity}
\end{equation}
Thus each summand contributes $O(\eta^2)$ to the initial gap; whenever
$\operatorname{prog}_\eta(\bm z)<T$, the derivative in coordinate
$\operatorname{prog}_\eta(\bm z)+1$ has magnitude $\Theta(\eta)$; and
the Hessian remains $O(1)$.

\subsubsection{Smooth Decomposition of the Zero-chain}
\label{subsec:extractor}
For the stochastic construction, we use soft indicators of the tail
coordinates $z_i,\ldots,z_T$ to isolate the incoming-coordinate summand for
randomization. This decomposition adapts the smooth probability-$p$
zero-chain construction of
\citet[Sections~3.3 and~5.1]{arjevani2023lower} by rescaling it with parameter \(\eta\).
Let
\[
  \chi_+(s)=
  \begin{cases}
    0,&s\le0,\\
    e^{-1/s},&s>0,
  \end{cases}
\]
and, for $a<b$, define the flat transition
\begin{equation}
  \gamma_{a,b}(t)
  =\frac{\chi_+(t-a)}
  {\chi_+(t-a)+\chi_+(b-t)}.
  \label{eq:flat-transition}
\end{equation}
$\gamma_{a,b}(t)$ is $C^\infty$, equals zero on $(-\infty,a]$, equals one on
$[b,\infty)$, and satisfies
$\gamma_{a,b}^{(r)}(a)=\gamma_{a,b}^{(r)}(b)=0$
for every integer $r\ge1$. Define
\begin{alignat}{2}
  &\Gamma(t)
    =\gamma_{1/4,1/2}(t),
  &&\quad
    \upsilon(t)
    =\Gamma\!\left(\left\lvert t\right\rvert\right),
  \\
  &\bm{V}_i^\eta(\bm{z})
    =
    \bigl(
      \upsilon(z_i/\eta),
      \ldots,
      \upsilon(z_T/\eta)
    \bigr),
  &&\quad
    h_i^\eta(\bm{z})
    =
    \Gamma\!\left(
      1-\left\lVert \bm{V}_i^\eta(\bm{z})\right\rVert
    \right).
  \label{eq:extractor-gates}
\end{alignat}
Indeed, if $\bm{x}=\bm{z}/\eta$, then $h_i^\eta(\bm{z})$ has the same
tail-norm form as the smooth progress indicator $\Theta_i(\bm{x})$ in
\citet[Eq.~(19)]{arjevani2023lower}, up to the particular choice of
the one-dimensional transition function.  The thresholds $\eta/4$
and $\eta/2$ define a fixed transition region.  A tail coordinate
with magnitude greater than $\eta/2$ makes every corresponding soft
indicator vanish locally, while the flatness of the chain summands
eliminates terms lying more than one coordinate beyond the current
progress index.

Finally, define
\begin{equation}
  b_{\eta,T}(\bm{z})
  =\eta^2\sum_{i=1}^T h_i^\eta(\bm{z})\mathcal{L}_i^\eta(\bm{z}),
  \qquad
  A_{\eta,T}(\bm{z})=H_{\eta,T}(\bm{z})-b_{\eta,T}(\bm{z}).
  \label{eq:frontier-decomposition}
\end{equation}
For $\xi\sim\operatorname{Ber}(p)$, randomizing only $b_{\eta,T}$
in the decomposition $H_{\eta,T}=A_{\eta,T}+b_{\eta,T}$ gives
\[
  A_{\eta,T}(\bm z)+\frac{\xi}{p}b_{\eta,T}(\bm z)
  =
  H_{\eta,T}(\bm z)
  +\left(\frac{\xi}{p}-1\right)b_{\eta,T}(\bm z).
\]
This has the same importance-weighted form as the randomized zero-chain in \citet[Eq.~(31)]{arjevani2023lower}.
In our bilevel construction, however, we do not randomize the chain function directly. Instead, $A_{\eta,T}$ enters the deterministic
upper-level term, while $b_{\eta,T}$ is used to construct the compensated lower-level block, which is multiplied in its entirety by $\xi/p$.

Although $b_{\eta,T}$ is written as a sum of $T$ terms, at any point
at most one summand can have a nonzero derivative of order at most
three.  The next lemma gives this derivative bound together with
the required coordinate-dependence identities.  Its proof is given in
Appendix~\ref{app:extractor-proof}.

\begin{lemma}
\label{lem:extractor}
For $r=0,1,2,3$,
\begin{equation}
  \left\lVert D^rb_{\eta,T}(\bm{z})\right\rVert _{\mathrm{op}}
  \le C_{\rm ext}\eta^{2-r},
  \label{eq:b-derivative-bounds}
\end{equation}
where $C_{\rm ext}$ is a fixed numerical constant. Let
\[
  m=\operatorname{prog}_{\eta/2}(\bm{z}),
  \qquad
  k=\operatorname{prog}_{\eta/4}(\bm{z}).
\]
Then $m\le k$.  If $m<T$, only the $(m+1)$--st summand in
\eqref{eq:frontier-decomposition} can have a nonzero derivative of
order at most three at $\bm{z}$; if $m=T$, the value and derivatives
through order three of every summand vanish at $\bm z$.
Moreover,
\begin{align}
  &A_{\eta,T}(\bm{z})=A_{\eta,T}(\bm{P}_k\bm{z}),
  &
  &b_{\eta,T}(\bm{z})=b_{\eta,T}(\bm{P}_{k+1}\bm{z}),
  \label{eq:extractor-cylinder-values}
 \\
  &D^rA_{\eta,T}(\bm{z})
  =D^rA_{\eta,T}(\bm{P}_k\bm{z})\circ(\bm{P}_k,\ldots,\bm{P}_k),
  & &1\le r\le3,
  \label{eq:extractor-cylinder-A}\\
  &D^rb_{\eta,T}(\bm{z})
  =D^rb_{\eta,T}(\bm{P}_{k+1}\bm{z})
    \circ(\bm{P}_{k+1},\ldots,\bm{P}_{k+1}),
  & &1\le r\le3.\label{eq:extractor-cylinder-b}
\end{align}
The identities remain valid when a coordinate is exactly at a
threshold $\eta/4$ or $\eta/2$.
\end{lemma}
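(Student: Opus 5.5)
The proof reduces everything to rescaling: I would set $\bm{x}=\bm{z}/\eta$ and write
\[
b_{\eta,T}(\bm{z})=\eta^2\sum_i h_i^\eta(\bm{z})\mathcal L_i^\eta(\bm{z})=\eta^2\,\tilde b(\bm{z}/\eta),
\]
where $\tilde b(\bm{x})=\sum_i\Gamma\bigl(1-\lVert(\upsilon(x_i),\ldots,\upsilon(x_T))\rVert\bigr)Q(x_{i-1},x_i)$ is an $\eta$-independent function. Then $D^rb_{\eta,T}(\bm{z})=\eta^{2-r}D^r\tilde b(\bm{z}/\eta)$, exactly as in \eqref{eq:chain-scaling-identity}. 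The derivative bound \eqref{eq:b-derivative-bounds} therefore reduces to a dimension-free bound on $\lVert D^r\tilde b\rVert_{\rm op}$ for $r\le3$. This bound will follow once the localization statement is established, because then only one summand contributes.

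\textbf{Step 1 (localization).} Fix $\bm{z}$ and let $m=\operatorname{prog}_{\eta/2}(\bm{z})$ and $k=\operatorname{prog}_{\eta/4}(\bm{z})$. Since $\lvert z_j\rvert>\eta/2$ implies $\lvert z_j\rvert>\eta/4$, we get $m\le k$. I will split the summands into two ranges.

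For $i\le m$: the coordinate $z_m$ satisfies $\lvert z_m\rvert/\eta>1/2$, so $\upsilon(z_m/\eta)=1$. This holds on a neighborhood of $\bm{z}$ (strict inequality, continuity), so $\lVert\bm V_i^\eta\rVert\ge1$ there and $h_i^\eta=\Gamma(\le 0)=0$ identically near $\bm{z}$. All derivatives of these summands therefore vanish.

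For $i\ge m+2$: we have $\lvert z_{i-1}\rvert\le\eta/2$, and Lemma~\ref{lem:standard-chain} says $\mathcal L_i^\eta$ and all its derivatives vanish at $\bm{z}$. The product rule then kills every derivative of $h_i^\eta\mathcal L_i^\eta$ up to order three, since each Leibniz term contains some derivative of $\mathcal L_i^\eta$ evaluated at $\bm{z}$.

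This leaves only the $(m+1)$-st summand, and when $m=T$ nothing remains. A genuine obstacle here is the smoothness of $h_i^\eta$ at points where $\bm V_i^\eta=0$, since the Euclidean norm is not smooth at the origin. I would resolve it by noting that $\Gamma(1-s)=1$ for $s\le 1/2$, so $h_i^\eta$ is locally constant near such points. Similarly, $\upsilon=\Gamma(\lvert\cdot\rvert)$ is smooth because $\Gamma$ is flat near $0$.

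\textbf{Step 2 (bounds).} For the single surviving summand, I would use the product and chain rules. $Q$ has derivatives of order at most three bounded by absolute constants, as in Carmon et al. The gate $\Gamma(1-\lVert\bm V\rVert)$ is only nonconstant where $\lVert\bm V\rVert\in[1/2,3/4]$. There the norm has bounded derivatives, and $\bm V$ is a coordinatewise map with bounded derivatives. For multilinear operator norms, the diagonal structure of $D^r\bm V$ keeps the bound dimension-free. This gives a numerical constant $C_{\rm ext}$ for $\tilde b$, and the scaling from the start transfers it to $b_{\eta,T}$.

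\textbf{Step 3 (cylinder identities).} For $b_{\eta,T}$: by Step 1 only summand $m+1\le k+1$ is active. I will show that $\bm{z}$ and $\bm P_{k+1}\bm{z}$ have the same $m$, and that the active summand agrees on the two points along with its derivatives restricted to the first $k+1$ coordinates. The coordinates beyond $k$ are at most $\eta/4$, where $\upsilon=0$ on a neighborhood. Hence the tail norm, the gate, and $\mathcal L_{m+1}^\eta$, which depends only on $z_m,z_{m+1}$ with $m+1\le k+1$, are unaffected by zeroing coordinates $j>k+1$. Their derivatives in those directions also vanish, which gives \eqref{eq:extractor-cylinder-b}.

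For $A_{\eta,T}$, I would write $A=\eta^2\sum_i(1-h_i^\eta)\mathcal L_i^\eta$ and argue directly:
\begin{itemize}
\item Summands with $i\ge k+2$ vanish with all derivatives, because $\lvert z_{i-1}\rvert\le\eta/4$.
\item For $i=k+1$, the tail $\bm V_{k+1}^\eta$ is identically zero near $\bm{z}$, so $h_{k+1}^\eta=1$ locally and $(1-h)\mathcal L$ is killed.
\item Summands with $i\le k$ depend on coordinates up to $k$ together with tail gates that are locally zero beyond $k$.
\end{itemize}
This gives \eqref{eq:extractor-cylinder-values} and \eqref{eq:extractor-cylinder-A}.

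Threshold cases, with a coordinate exactly at $\eta/4$ or $\eta/2$, are handled by the flatness $\gamma^{(r)}(a)=\gamma^{(r)}(b)=0$ and by the flatness of $\Psi$ at $1/2$. These make the relevant factors vanish to all orders on one side and match on the other.

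I expect the main obstacle to be this neighborhood and threshold bookkeeping, especially at $i=k+1$ and for the derivative (not just value) identities.
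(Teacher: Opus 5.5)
Your proposal is correct. The localization and the derivative bound match the paper's Steps 1--3; the rescaling $b_{\eta,T}(\bm z)=\eta^2\tilde b(\bm z/\eta)$ only repackages the paper's tracking of the $\eta^{-r}$ factors in $D^r\bm V_i^\eta$ and $D^r\mathcal L_i^\eta$. You prove the truncation identities by a different route, though.

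\emph{The paper's route.} It splits into the cases $m=k$ and $m<k$ and writes explicit local formulas: $b_{\eta,T}=\eta^2\mathcal L_{k+1}^\eta$ and $A_{\eta,T}=\eta^2\sum_{i\le k}\mathcal L_i^\eta$ if $m=k$; $H_{\eta,T}=\eta^2\sum_{i\le m+1}\mathcal L_i^\eta$ and $b_{\eta,T}=\eta^2h_{m+1}^\eta\mathcal L_{m+1}^\eta$ if $m<k$. It then transports these along the segments $\bm P_k\bm z+t(\bm I-\bm P_k)\bm z$ and $\bm P_{k+1}\bm z+t(\bm I-\bm P_{k+1})\bm z$, on which both progress indices stay fixed.

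\emph{Your route.} You compare jets at the two endpoints directly. For $b_{\eta,T}$ you use the invariance of $m$ under $\bm P_{k+1}$ together with the single active summand. For $A_{\eta,T}$ you argue summand by summand through $A_{\eta,T}=\eta^2\sum_i(1-h_i^\eta)\mathcal L_i^\eta$.

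\emph{Comparison.} Your version needs no case split and no path. It also sidesteps the fact that the paper's $m<k$ formulas are identities of jets, not of functions, when some $\lvert z_j\rvert=\eta/2$ with $m<j\le k$. In exchange, the paper's route exposes more structure; for instance, when $m<k$ even $b_{\eta,T}$ depends only on the first $k$ coordinates.

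\emph{Two claims to fix when writing it out.} The statements ``$\upsilon=0$ on a neighborhood'' and ``$\bm V_{k+1}^\eta$ is identically zero near $\bm z$'' are both false if some $\lvert z_j\rvert=\eta/4$ exactly. Replace them with the mechanism that actually works:
\begin{itemize}
\item For each zeroed coordinate $j$ one has $\upsilon(z_j/\eta)=0=\upsilon(0)$. So, with the tail held fixed, the partial functions of the leading coordinates coincide at $\bm z$ and at the truncated point.
\item Every derivative involving a tail direction carries a factor $\upsilon^{(s)}(z_j/\eta)$ with $s\ge1$, which vanishes by flatness.
\item $h_{k+1}^\eta\equiv1$ near both $\bm z$ and $\bm P_k\bm z$ by continuity, since $\bm V_{k+1}^\eta$ vanishes at both points.
\end{itemize}
The last point must be checked at $\bm P_k\bm z$ as well as at $\bm z$, because at $\bm P_k\bm z$ the factor $\mathcal L_{k+1}^\eta=Q(z_k/\eta,0)$ need not vanish.
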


Equations~\eqref{eq:extractor-cylinder-values}--\eqref{eq:extractor-cylinder-b}
show that $A_{\eta,T}$ and its first three derivatives depend only on
the first $k$ coordinates, while $b_{\eta,T}$ may also depend on
coordinate $k+1$.  These coordinate-dependence identities are used in the
random-rotation argument.

\subsubsection{Bilevel Objectives and Stochastic Oracle}

Define an even, compactly supported cutoff function
\begin{equation}
  \omega(t)=1-\gamma_{1,2}(\lvert t\rvert),
  \qquad
  \omega(t)=
  \begin{cases}
    1,&|t|\le1,\\
    0,&|t|\ge2.
  \end{cases}
  \label{eq:omega-definition}
\end{equation}
We will later use the scaled cutoff function $\psi_\eta(v)=v\omega(v/\eta)$. Then $\psi_\eta(v)=v$ on
$[-\eta,\eta]$ and $\psi_\eta(v)=0$ outside
$[-2\eta,2\eta]$. Since
$\varphi(t):=t\omega(t)$ is a fixed smooth compactly
supported function, the numerical constant
$C_\psi\coloneqq\max_{0\le r\le3}\lVert\varphi^{(r)}\rVert_\infty$
is finite. The identity $\psi_\eta(v)=\eta\varphi(v/\eta)$ gives
\begin{equation}
  \left\lVert \psi_\eta^{(r)}\right\rVert _\infty
  \le C_\psi\eta^{1-r},\quad 0\le r\le3.
  \label{eq:psi-derivative-scale}
\end{equation}

To obtain a stronger dependence of the stochastic lower bound on the
condition number $\kappa_y$, we introduce a $2\times2$ matrix that will
serve as a quadratic block in the hard instance. Fix $\kappa\ge2$ and
define
\begin{equation}
  \bm{M}_\kappa
  \coloneqq
  \frac12
  \begin{pmatrix}
    1+\kappa^{-1} & -(1-\kappa^{-1})\\
    -(1-\kappa^{-1}) & 1+\kappa^{-1}
  \end{pmatrix}.
  \label{eq:M-kappa}
\end{equation}
Let $d_\kappa \coloneqq \frac{\kappa+1}{2}$ and $
  s_\kappa \coloneqq \frac{\kappa-1}{2}$.
Then
\begin{equation}
  \bm{M}_\kappa^{-1}
  =
  \begin{pmatrix}
    d_\kappa & s_\kappa\\
    s_\kappa & d_\kappa
  \end{pmatrix},
  \qquad
  \lambda_{\min}(\bm{M}_\kappa)=\kappa^{-1},
  \qquad
  \lambda_{\max}(\bm{M}_\kappa)=1.
  \label{eq:amplifier-spectrum}
\end{equation}
For $\kappa\ge2$, these quantities satisfy
\begin{equation}
  s_\kappa\ge\frac\kappa4,
  \qquad
  d_\kappa\le\frac{3\kappa}{4},
  \qquad
  1<\frac{d_\kappa}{s_\kappa}\le3.
  \label{eq:amplifier-elementary-bounds}
\end{equation}

Fix an orthonormal frame
$\bm{U}=(\bm{u}_1,\ldots,\bm{u}_T)$ in $\mathbb{R}^d$.  
A Haar-random choice
of $\bm U$ is introduced only in Section~\ref{sec:rotation-lifting}.  Let $R=230\eta \sqrt T$, and define the soft projection, following~\cite{carmon2020lower} and~\cite{arjevani2023lower}, by
\begin{equation}
  \rho_R(\bm{z})=\frac{\bm{z}}{\sqrt{1+\left\lVert \bm{z}\right\rVert ^2/R^2}}.
  \label{eq:soft-projection}
\end{equation}
Define the projected chain coordinates
 $ \bm q_{\bm U}(\bm z)
  \coloneqq
  \bm U^\top \rho_R(\bm z)$.
Applying the deterministic and incoming-coordinate components in
\eqref{eq:frontier-decomposition} to these projected coordinates, define
\begin{equation}
  a_{\bm U}(\bm z)
  \coloneqq
  A_{\eta,T}(\bm q_{\bm U}(\bm z)),
  \qquad
  \theta_{\bm U}(\bm z)
  \coloneqq
  b_{\eta,T}(\bm q_{\bm U}(\bm z)).
  \label{eq:rotated-pieces}
\end{equation}

We then define three components of the hard instance: a pseudo-Huber term, an incoming-coordinate term scaling
$\theta_{\bm U}(\bm z)$ by $1/s_\kappa$, and a compensation term:
\begin{align}
  &h_R(\bm{z}) 
  = \frac{1}{4} R^2\left(
    \sqrt{1+\frac{\left\lVert \bm{z}\right\rVert ^2}{R^2}}-1
  \right),
  \label{eq:pseudo-huber}\\
  &\vartheta_{\kappa,\bm{U}}(\bm{z})
  =\frac{\theta_{\bm{U}}(\bm{z})}{s_\kappa},
  \label{eq:attenuated-frontier}\\
  &\mathcal{R}_{\kappa,\bm{U}}(\bm{z},\bm{v})
  =\frac{d_\kappa}{2}\vartheta_{\kappa,\bm{U}}(\bm{z})^2
    -\vartheta_{\kappa,\bm{U}}(\bm{z})\psi_\eta(v_1),
  \qquad
  \bm{v}=(v_1,v_2)\in\mathbb R^2.
  \label{eq:compensated-block}
\end{align}

Finally, we define the upper- and lower-level objectives of the hard instance as follows:
\begin{align}
  f_{\bm{U}}^\kappa(\bm{x},\bm{z},\bm{v})
  &=a_{\bm{U}}(\bm{z})+v_2+h_R(\bm{z}),
  \label{eq:hard-f}\\
  g_{\bm{U}}^\kappa(\bm{x},\bm{z},\bm{v})
  &=\frac1{2\kappa}\left\lVert \bm{z}\right\rVert ^2-\left\langle \bm{x},\bm{z}\right\rangle 
    +\frac12\bm{v}^\top\bm{M}_\kappa\bm{v}
    +\mathcal{R}_{\kappa,\bm{U}}(\bm{z},\bm{v}).
  \label{eq:hard-g}
\end{align}
Note that the upper-level variable $\bm{x}$ appears only in the lower-level objective. We introduce stochasticity by randomizing only the compensated block. Specifically, for
$\xi\sim\operatorname{Ber}(p)$ with $p\in(0,1]$, set
\begin{equation}
  \widehat f_{\bm{U}}^\kappa=f_{\bm{U}}^\kappa,
  \qquad
  \widehat g_{\bm{U}}^\kappa(\bm{x},\bm{z},\bm{v};\xi)
  =\frac1{2\kappa}\left\lVert \bm{z}\right\rVert ^2-\left\langle \bm{x},\bm{z}\right\rangle 
    +\frac12\bm{v}^\top\bm{M}_\kappa\bm{v}
    +\frac\xi p\mathcal{R}_{\kappa,\bm{U}}(\bm{z},\bm{v}).
  \label{eq:sample-hard-instance}
\end{equation}
The $\mathcal{SFO}$ returns
$(\nabla\widehat f_{\bm{U}}^\kappa,\nabla\widehat g_{\bm{U}}^\kappa)$.

To explain the two auxiliary coordinates, consider the region in
which $\psi_\eta(v_1)=v_1$.
Lemma~\ref{lem:population-solution} shows that the population lower-level minimizer lies in this region. Hence, the $\bm{v}$-subproblem is
\[
\min_{\bm{v}\in\mathbb{R}^2}
\left\{
\frac{1}{2}\bm{v}^\top \bm{M}_\kappa \bm{v}
+
\frac{d_\kappa}{2}\vartheta_{\kappa,\bm{U}}(\bm{z})^2
-
\vartheta_{\kappa,\bm{U}}(\bm{z}) v_1
\right\},
\]
whose first-order condition is
\[
  \bm M_\kappa\bm v=
  \begin{pmatrix}\vartheta_{\kappa,\bm{U}}(\bm{z})\\0\end{pmatrix}.
\]
Since $(\bm M_\kappa^{-1})_{21}=s_\kappa$, the second coordinate of
the solution is $v_2^*=s_\kappa\vartheta_{\kappa,\bm{U}}(\bm{z})=\theta_{\bm U}(\bm{z})$.  Since the
upper objective contains $v_2$, substitution of the lower solution
recovers $\theta_{\bm U}(\bm{z})$ at its original scale.  Moreover, since
$(\bm M_\kappa^{-1})_{11}=d_\kappa$,
\[
  \min_{\bm v}
  \left\{
    \frac12\bm v^\top\bm M_\kappa\bm v
    -\vartheta_{\kappa,\bm{U}}(\bm{z}) v_1
    +\frac{d_\kappa}{2}\vartheta_{\kappa,\bm{U}}(\bm{z})^2
  \right\}
  =0.
\]
Thus minimizing over $\bm v$ adds no $\bm z$-dependent value to the
 lower-level objective, and
$\bm z^*(\bm x)=\kappa\bm x$ is unchanged. Both terms in $\mathcal R_{\kappa,\bm U}$ are multiplied by the same Bernoulli weight; otherwise the response for $\xi=0$ could still
depend on the incoming coordinate through the deterministic
compensation term.

\subsection{Properties of the Hard Instance}
\label{sec:analytic}

The next lemma gives the derivative bounds obtained after composition with the soft projection $\rho_R$. Its proof is given in
Appendix~\ref{app:composition-proof}.

\begin{lemma}
\label{lem:composition-scales}
Define
\begin{align}
  &C_A
  =\max\left\{
      23+C_{\rm ext},
      C_{{\rm ch},2}+C_{\rm ext},
      C_{{\rm ch},3}+C_{\rm ext}
    \right\},
  \\
  &C_\theta
  =C_{\rm ext}\left(1+\frac{18}{230}
      +\frac{36}{230^2}\right),
  \qquad
  C_a
  =C_A\left(1+\frac{18}{230}
      +\frac{36}{230^2}\right).
\end{align}
Uniformly in $T,d,\bm{U}$, the soft projection satisfies
\begin{equation}
  \left\lVert D\rho_R\right\rVert \le1,
  \qquad
  \left\lVert D^2\rho_R\right\rVert _{\mathrm{op}}\le\frac{6}{R},
  \qquad
  \left\lVert D^3\rho_R\right\rVert _{\mathrm{op}}\le\frac{36}{R^2}.
  \label{eq:rho-higher-derivatives}
\end{equation}
The rotated pieces satisfy
\begin{alignat}{3}
  &\left\lvert \theta_{\bm{U}}(\bm{z})\right\rvert
    \le C_\theta\eta^2,
  &&\quad
    \left\lVert D^r\theta_{\bm{U}}(\bm{z})\right\rVert_{\mathrm{op}}
    \le C_\theta\eta^{2-r},
  &&\quad
    1\le r\le3,
  \label{eq:theta-composition-scales}
  \\
  &\left\lVert Da_{\bm{U}}(\bm{z})\right\rVert
    \le C_a\eta\sqrt T,
  &&\quad
    \left\lVert D^2a_{\bm{U}}(\bm{z})\right\rVert_{\mathrm{op}}
    \le C_a,
  &&\quad
    \left\lVert D^3a_{\bm{U}}(\bm{z})\right\rVert_{\mathrm{op}}
    \le \frac{C_a}{\eta}.
  \label{eq:a-composition-scales}
\end{alignat}
The pseudo-Huber term satisfies
\begin{equation}
  h_R\ge0,
  \qquad h_R(\bm{0})=0,
  \qquad \left\lVert \nabla h_R(\bm{z})\right\rVert \le \frac{1}{4} R,
  \qquad 0\preceq\nabla^2h_R(\bm{z})\preceq \frac{1}{4} \bm{I}. 
  \label{eq:huber-properties}
\end{equation}
\end{lemma}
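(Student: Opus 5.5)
We prove the four groups of bounds in order: the soft projection first, then the chain pieces by the chain rule, then the pseudo-Huber term. The soft projection and pseudo-Huber bounds are routine scalar/radial calculus. The only real work is bookkeeping the chain-rule terms so that the factors of $R$ combine with $\eta$ into the constants $C_\theta$ and $C_a$.

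\emph{Soft projection.} We write $\rho_R(\bm z)=R\,\rho_1(\bm z/R)$ with $\rho_1(\bm w)=\bm w/\sqrt{1+\lVert\bm w\rVert^2}$. By scaling, $D^r\rho_R(\bm z)=R^{1-r}D^r\rho_1(\bm z/R)$, so it suffices to show $\lVert D\rho_1\rVert\le1$, $\lVert D^2\rho_1\rVert_{\mathrm{op}}\le 6$ and $\lVert D^3\rho_1\rVert_{\mathrm{op}}\le36$. With $s=(1+\lVert\bm w\rVert^2)^{-1/2}$ we have
\[
D\rho_1(\bm w)=s\bm I-s^3\bm w\bm w^\top .
\]
Its eigenvalues are $s$ and $s^3$, both in $(0,1]$. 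Differentiating again gives sums of terms of the form $s^{3}\bm w$, $s^5\bm w\bm w^\top\bm w$, and so on, contracted against unit vectors. Each such term is bounded by a monomial $s^{a}\lVert\bm w\rVert^{b}$ with $a\ge b+1$, hence by $1$. Counting the terms (three in $D^2$, at most $36$ in $D^3$) gives the stated constants. This reproduces the soft-projection estimate of \citet{arjevani2023lower}.

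\emph{Rotated pieces.} We have $\bm q_{\bm U}=\bm U^\top\rho_R$ and $\lVert\bm U^\top\rVert\le1$. Applying Faà di Bruno to $\theta_{\bm U}=b_{\eta,T}\circ\bm q_{\bm U}$ gives
\[
D^3\theta_{\bm U}[\bm u,\bm u,\bm u]=D^3b[\dot q,\dot q,\dot q]+3D^2b[\ddot q,\dot q]+Db[\dddot q],
\]
where $\dot q=\bm U^\top D\rho_R[\bm u]$ and $\ddot q,\dddot q$ are the corresponding second and third derivatives. Lemma~\ref{lem:extractor} gives $\lVert D^r b\rVert_{\mathrm{op}}\le C_{\rm ext}\eta^{2-r}$, and the soft-projection bounds give $\lVert\dot q\rVert\le1$, $\lVert\ddot q\rVert\le6/R$, $\lVert\dddot q\rVert\le36/R^2$. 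Therefore
\[
\lVert D^3\theta_{\bm U}\rVert_{\mathrm{op}}\le C_{\rm ext}\bigl(\eta^{-1}+18/R+36\eta/R^2\bigr).
\]
Since $R=230\eta\sqrt T\ge230\eta$, this is at most $C_{\rm ext}\eta^{-1}\bigl(1+18/230+36/230^2\bigr)=C_\theta\eta^{-1}$. The orders $r=0,1,2$ are bounded in the same way, using the fewer terms that appear; the value bound needs no chain rule at all.

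For $a_{\bm U}=A_{\eta,T}\circ\bm q_{\bm U}$, we first bound $A=H-b$ using Lemma~\ref{lem:standard-chain} together with Lemma~\ref{lem:extractor}. This gives
\[
\lVert DA\rVert\le 23\eta\sqrt T+C_{\rm ext}\eta\le(23+C_{\rm ext})\eta\sqrt T,\qquad \lVert D^2A\rVert_{\mathrm{op}}\le C_{{\rm ch},2}+C_{\rm ext},\qquad \lVert D^3A\rVert_{\mathrm{op}}\le (C_{{\rm ch},3}+C_{\rm ext})/\eta,
\]
and each of these is at most $C_A$ times the corresponding $\eta$-scale. The gradient bound for $a_{\bm U}$ then follows from $\lVert\dot q\rVert\le1$.

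\emph{Main obstacle: the $a_{\bm U}$ chain rule.} The difficulty is the cross term $Db[\cdot]$ with $\lVert DA\rVert$ of size $\eta\sqrt T$ rather than $\eta$. In $D^2a_{\bm U}$ it contributes $C_A\eta\sqrt T\cdot 6/R=6C_A/230$. In $D^3a_{\bm U}$ it contributes $C_A\eta\sqrt T\cdot36/R^2=36C_A/(230^2\eta\sqrt T)\le 36C_A/(230^2\eta)$. In $D^3a_{\bm U}$ there is also the mixed term $3\cdot C_A\cdot 6/R\le 18C_A/(230\eta)$. Collecting terms gives
\[
\lVert D^3a_{\bm U}\rVert_{\mathrm{op}}\le \frac{C_A}{\eta}\Bigl(1+\frac{18}{230}+\frac{36}{230^2}\Bigr)=\frac{C_a}{\eta}.
\]
The factor $\sqrt T$ cancels precisely because $R$ carries $\sqrt T$. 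The bound $\lVert D^2a_{\bm U}\rVert_{\mathrm{op}}\le C_a$ follows similarly.

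\emph{Pseudo-Huber term.} We have $\nabla h_R(\bm z)=\tfrac14\rho_R(\bm z)$, whose norm is below $R/4$. Its Hessian is $\tfrac14 D\rho_R$, which is symmetric with eigenvalues in $(0,1]$ by the computation above. Finally, $h_R(\bm 0)=0$ and $h_R\ge0$ are immediate from the definition.
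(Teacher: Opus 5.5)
Your proposal is correct and follows the paper's proof essentially step for step: the Jacobian $s\bm I-s^3\bm w\bm w^\top$ with eigenvalues $s$ and $s^3$, term-by-term bounds on $D^2\rho_R$ and $D^3\rho_R$, and the same chain-rule bookkeeping for $\theta_{\bm U}$ and $a_{\bm U}$. Your rescaling $\rho_R=R\rho_1(\cdot/R)$ is only a repackaging of the paper's use of $\alpha\le1$ and $\alpha\lVert\bm z\rVert\le R$. In the chain-rule step, $R=230\eta\sqrt T$ absorbs the $\sqrt T$ and yields exactly the factor $1+18/230+36/230^2$.

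Two small points need tidying. First, your term counts should include coefficient multiplicities: for example, $3s^5\lVert\bm w\rVert^3$ counts three times, which gives $1+1+3+1=6$ for $D^2\rho_1$ and $12+24=36$ for $D^3\rho_1$. Second, the diagonal Fa\`a di Bruno identity should be written in polarized form, because the operator norms here are suprema over independent unit directions; alternatively, cite that symmetric forms on a Hilbert space attain their norm on the diagonal.
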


Since $s_\kappa=\Theta(\kappa)$,
$\vartheta_{\kappa,\bm U}=\theta_{\bm U}/s_\kappa$ and its derivatives
contain a factor of order $\kappa^{-1}$.  The next lemma gives the
corresponding global bounds for $\mathcal R_{\kappa,\bm U}$. Its proof is given in
Appendix~\ref{app:regularity-proof}.

\begin{lemma} 
\label{lem:amplifier-derivatives}
Define
\begin{alignat}{3}
  &C_{\rm grad}=8C_\theta(2C_\theta+C_\psi),
  &&\qquad
  C_{\rm Hess}=16C_\theta(2C_\theta+C_\psi),
  &&\qquad
  C_{\rm third}=32C_\theta(2C_\theta+C_\psi).
  \label{eq:amplifier-derivative-constants}
\end{alignat}
For every $\kappa\ge2$, $0<\eta\le1$, $\bm z\in\mathbb R^d$, and
$\bm v\in\mathbb R^2$,
\begin{alignat}{3}
  &\left\lvert\vartheta_{\kappa,\bm U}(\bm z)\right\rvert
  \le\frac{4C_\theta\eta^2}{\kappa},
  &&\quad
  \left\lVert D^r\vartheta_{\kappa,\bm U}(\bm z)\right\rVert_{\rm op}
  \le\frac{4C_\theta\eta^{2-r}}{\kappa},
  &&\quad
  1\le r\le3,
  \label{eq:attenuated-derivative-scales} \\
  &\left\lVert\nabla_{(z,v)}
    \mathcal R_{\kappa,\bm U}(\bm z,\bm v)\right\rVert
  \le C_{\rm grad}\frac{\eta^2}{\kappa},
  &&\quad
  \left\lVert D^2_{(z,v)}
    \mathcal R_{\kappa,\bm U}(\bm z,\bm v)\right\rVert_{\rm op}
  \le C_{\rm Hess}\frac{\eta}{\kappa},
  \\
  &
  \left\lVert D^3_{(z,v)}
    \mathcal R_{\kappa,\bm U}(\bm z,\bm v)\right\rVert_{\rm op}
  \le\frac{C_{\rm third}}{\kappa}.
  \label{eq:amplifier-derivative-scales}
\end{alignat}
\end{lemma}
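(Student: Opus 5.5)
The bounds on $\vartheta_{\kappa,\bm U}$ follow at once from Lemma~\ref{lem:composition-scales}. For the bounds on $\mathcal R_{\kappa,\bm U}$, I plan to expand its derivatives with the product rule. Each resulting term is then a product of factors whose sizes are already known. Every term carries exactly one net factor $\kappa^{-1}$, because $d_\kappa\le 3\kappa/4$ multiplies two factors of $\vartheta$ (or of its derivatives), each of order $\kappa^{-1}$. The $\eta$-powers in each term have a fixed degree.

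\textbf{Step 1 ($\vartheta$).} Since $\vartheta_{\kappa,\bm U}=\theta_{\bm U}/s_\kappa$ and $s_\kappa\ge\kappa/4$ by \eqref{eq:amplifier-elementary-bounds}, \eqref{eq:theta-composition-scales} gives
\[
\lvert\vartheta\rvert\le 4C_\theta\eta^2/\kappa
\quad\text{and}\quad
\lVert D^r\vartheta\rVert_{\rm op}\le 4C_\theta\eta^{2-r}/\kappa \quad (1\le r\le 3).
\]
This is \eqref{eq:attenuated-derivative-scales}.

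\textbf{Step 2 (gradient).} Write $\mathcal R=\tfrac{d_\kappa}{2}\vartheta(\bm z)^2-\vartheta(\bm z)\psi_\eta(v_1)$. Its gradient has two blocks:
\[
\nabla_z\mathcal R=(d_\kappa\vartheta-\psi_\eta(v_1))\nabla\vartheta,
\qquad
\nabla_v\mathcal R=-\vartheta\,\psi_\eta'(v_1)\bm e_1 .
\]
I will bound the full gradient norm by the sum of the two block norms, using $d_\kappa\le 3\kappa/4$, $\lVert\psi_\eta^{(r)}\rVert_\infty\le C_\psi\eta^{1-r}$ from \eqref{eq:psi-derivative-scale}, and Step 1. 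This gives
\[
\tfrac34\kappa\cdot\tfrac{16C_\theta^2\eta^3}{\kappa^2}+C_\psi\eta\cdot\tfrac{4C_\theta\eta}{\kappa}+\tfrac{4C_\theta\eta^2}{\kappa}\cdot C_\psi
=\frac{12C_\theta^2\eta^3+8C_\theta C_\psi\eta^2}{\kappa}.
\]
With $\eta\le1$, this is at most $8C_\theta(2C_\theta+C_\psi)\eta^2/\kappa$.

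\textbf{Step 3 (Hessian and third derivative).} I will apply the multilinear Leibniz rule to $\vartheta^2$ and to $\vartheta(\bm z)\psi_\eta(v_1)$. The latter is a product of a function of $\bm z$ and a function of $v_1$, so each mixed derivative factors into $D^a\vartheta\otimes\psi_\eta^{(b)}$. Its operator norm is at most $\lVert D^a\vartheta\rVert_{\rm op}\,\lvert\psi_\eta^{(b)}\rvert$, where the total order is $a+b=r$.

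For $r=2$, the terms are:
\[
d_\kappa\bigl(\nabla\vartheta\otimes\nabla\vartheta+\vartheta D^2\vartheta\bigr),\quad \psi_\eta D^2\vartheta,\quad \text{two cross terms }\psi_\eta'\,\nabla\vartheta\otimes\bm e_1,\quad \vartheta\psi_\eta''\,\bm e_1\otimes\bm e_1 .
\]
Their bounds are $24C_\theta^2\eta^2/\kappa$, $4C_\theta C_\psi\eta/\kappa$, $8C_\theta C_\psi\eta/\kappa$ and $4C_\theta C_\psi\eta/\kappa$. Using $\eta\le1$ the total is at most $16C_\theta(2C_\theta+C_\psi)\eta/\kappa$.

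For $r=3$, the $d_\kappa$-terms are $d_\kappa(3\nabla\vartheta\otimes D^2\vartheta+\vartheta D^3\vartheta)$ (symmetrized). The cross terms are $\psi_\eta^{(b)}D^{3-b}\vartheta$ with binomial multiplicities $1,3,3,1$ for $b=0,\dots,3$. Each term scales as $\eta^0/\kappa$ up to nonnegative powers of $\eta$; for example,
\[
\vartheta\psi_\eta'''\lesssim (\eta^2/\kappa)\,\eta^{-2}=1/\kappa .
\]
The $d_\kappa$-terms give $\le 48C_\theta^2\eta/\kappa$, and the cross terms give $\le 32C_\theta C_\psi/\kappa$. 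Using $\eta\le1$, the total is at most $C_{\rm third}/\kappa$.

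\textbf{Main obstacle.} The only delicate point is the constant bookkeeping. I must check that the multiplicities from the symmetrized Leibniz rule, together with the use of $\eta\le1$ to absorb higher powers of $\eta$, fit under the stated constants $8,16,32$. I also need that the operator norm of a sum of block tensors is bounded by the sum of the block norms. I expect both to go through with the crude triangle-inequality estimates above.
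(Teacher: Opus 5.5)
Your proposal is correct and follows essentially the same route as the paper: you get the $\vartheta_{\kappa,\bm U}$ bounds from $s_\kappa\ge\kappa/4$, then expand $\mathcal R_{\kappa,\bm U}$ by the product rule and count terms with the triangle inequality, using $d_\kappa\lesssim\kappa$, \eqref{eq:psi-derivative-scale}, and $\eta\le1$. The only difference is bookkeeping: the paper keeps $d_\kappa\vartheta_{\kappa,\bm U}-\psi_\eta$ as one factor and bounds block norms (diagonal blocks plus twice the off-diagonal ones), whereas you bound every Leibniz term separately; your totals $12C_\theta^2+8C_\theta C_\psi$, $24C_\theta^2+16C_\theta C_\psi$, and $48C_\theta^2+32C_\theta C_\psi$ do fit under $C_{\rm grad}$, $C_{\rm Hess}$, and $C_{\rm third}$.
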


\subsubsection{Lower-level Solution and Hyper-objective}
The next lemma solves the lower-level problem and gives the
resulting hyper-objective.  In particular, the main lower solution is
$\bm{z}^*(\bm{x})=\kappa \bm{x}$, and the hyper-objective contains
$\theta_{\bm U}$ at its original scale. Its proof is given in
Appendix~\ref{app:population-solution-proof}.
\begin{lemma}
\label{lem:population-solution}
Define
\begin{equation}
  \eta_0
  =\min\left\{1,\frac1{3C_\theta},
    \frac1{8C_\theta C_\psi}\right\}.
  \label{eq:eta-zero}
\end{equation}
If $0<\eta\le\eta_0$, then
\begin{equation}
  \mathbb{E}_\xi\widehat g_{\bm{U}}^\kappa=g_{\bm{U}}^\kappa,
  \qquad
  \bm{z}_{\bm{U}}^*(\bm{x})=\kappa \bm{x},
  \qquad
  \bm v_{\bm U}^*(\bm x)
  =\bm M_\kappa^{-1}
  \begin{pmatrix}
    \vartheta_{\kappa,\bm U}(\kappa\bm x)\\0
  \end{pmatrix}.
  \label{eq:exact-lower-solution}
\end{equation}
In particular,
\begin{equation}
  v_{1,\bm U}^*(\bm x)
  =\frac{d_\kappa}{s_\kappa}\theta_{\bm U}(\kappa\bm x),
  \qquad
  v_{2,\bm U}^*(\bm x)=\theta_{\bm U}(\kappa\bm x).
  \label{eq:exact-auxiliary-solution}
\end{equation}
Consequently, the hyper-objective is
\begin{equation}
  \Phi_{\bm{U}}^\kappa(\bm{x})
  =H_{\eta,T}\!\left(\bm{U}^\top\rho_R(\kappa \bm{x})\right)
    +h_R(\kappa \bm{x}).
  \label{eq:exact-hyperobjective}
\end{equation}
\end{lemma}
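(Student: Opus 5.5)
The proof has three parts: unbiasedness, the lower-level solution, and substitution into $f$. Unbiasedness is immediate. Since $\mathbb E_\xi[\xi/p]=1$ and every other term of $\widehat g_{\bm U}^\kappa$ in \eqref{eq:sample-hard-instance} is deterministic, linearity of expectation gives $\mathbb E_\xi\widehat g_{\bm U}^\kappa=g_{\bm U}^\kappa$.

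For the lower-level solution I will not solve the coupled first-order system directly. Instead I will verify a candidate and use uniqueness. First, $g_{\bm U}^\kappa(\bm x,\cdot)$ is strongly convex for every $\bm x$, and hence has a unique minimizer. The quadratic part has Hessian $\operatorname{diag}(\kappa^{-1}\bm I,\bm M_\kappa)\succeq\kappa^{-1}\bm I$. By Lemma~\ref{lem:amplifier-derivatives}, $\lVert D^2\mathcal R_{\kappa,\bm U}\rVert_{\rm op}\le C_{\rm Hess}\eta/\kappa$, which is at most $\frac{1}{2\kappa}$ once $\eta\le\eta_0$. Strictly speaking this needs $\eta_0$ to control $C_{\rm Hess}=16C_\theta(2C_\theta+C_\psi)$, and I would check whether the stated $\eta_0$ suffices; if it does not, I would fall back on a direct verification that the candidate is a stationary point of a convex function, which also suffices.

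The candidate is $\bm z=\kappa\bm x$ and $\bm v=\bm M_\kappa^{-1}(\vartheta,0)^\top$, where $\vartheta=\vartheta_{\kappa,\bm U}(\kappa\bm x)$. I first check that it lies in the linear region of $\psi_\eta$. By Lemma~\ref{lem:amplifier-derivatives},
\[
\lvert v_1\rvert=d_\kappa\lvert\vartheta\rvert\le \tfrac{3\kappa}{4}\cdot\tfrac{4C_\theta\eta^2}{\kappa}=3C_\theta\eta^2\le\eta ,
\]
using $\eta\le 1/(3C_\theta)$. Hence $\psi_\eta(v_1)=v_1$ and $\psi_\eta'(v_1)=1$ near $v_1$. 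The $\bm v$-gradient is then $\bm M_\kappa\bm v-(\vartheta,0)^\top=0$.

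For the $\bm z$-gradient, the $\bm z$-dependence of $\mathcal R$ is
\[
\nabla_{\bm z}\mathcal R=\bigl(d_\kappa\vartheta-\psi_\eta(v_1)\bigr)\nabla\vartheta=(d_\kappa\vartheta-v_1)\nabla\vartheta=0,
\]
because $v_1=d_\kappa\vartheta$. The remaining terms give $\kappa^{-1}\bm z-\bm x=0$ at $\bm z=\kappa\bm x$. The candidate is therefore stationary, and by strong convexity it is the unique minimizer. This gives \eqref{eq:exact-lower-solution}, and \eqref{eq:exact-auxiliary-solution} follows by reading off the entries of $\bm M_\kappa^{-1}$ together with $\vartheta=\theta_{\bm U}/s_\kappa$.

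Substituting into $f_{\bm U}^\kappa$ gives
\[
\Phi_{\bm U}^\kappa(\bm x)=a_{\bm U}(\kappa\bm x)+\theta_{\bm U}(\kappa\bm x)+h_R(\kappa\bm x).
\]
Since $A_{\eta,T}+b_{\eta,T}=H_{\eta,T}$ by \eqref{eq:frontier-decomposition}, the first two terms combine to $H_{\eta,T}(\bm U^\top\rho_R(\kappa\bm x))$, which is \eqref{eq:exact-hyperobjective}.

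The main obstacle is the strong-convexity step, since it requires the $\eta_0$ threshold to dominate $C_{\rm Hess}$. If that fails, I will argue uniqueness differently. For fixed $\bm z$, the function is strongly convex in $\bm v$ as long as $\vartheta\psi_\eta''$ is small relative to $\kappa^{-1}$, and the bound $\lvert\vartheta\psi''_\eta\rvert\le 4C_\theta\eta^2\kappa^{-1}C_\psi\eta^{-1}\le\frac{1}{2\kappa}$ holds precisely when $\eta\le 1/(8C_\theta C_\psi)$, which matches the third entry of $\eta_0$. I would then treat the $\bm z$-coupling by a joint Hessian bound of the same type.
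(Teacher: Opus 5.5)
Your unbiasedness step, the check $\lvert v_1\rvert\le 3C_\theta\eta^2\le\eta$, the stationarity computation and the final substitution $A_{\eta,T}+b_{\eta,T}=H_{\eta,T}$ are all correct. The gap is the step from stationarity to unique global minimality, which does not go through under the lemma's hypothesis $\eta\le\eta_0$.

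\begin{itemize}
\item Joint strong convexity via $\lVert D^2_{(z,v)}\mathcal R_{\kappa,\bm U}\rVert_{\rm op}\le C_{\rm Hess}\eta/\kappa\le(2\kappa)^{-1}$ needs $\eta\le 1/(2C_{\rm Hess})=1/(64C_\theta^2+32C_\theta C_\psi)$. This is strictly below $1/(8C_\theta C_\psi)$, and below $1/(3C_\theta)$ since $C_\psi\ge\varphi'(0)=1$, so it is not a consequence of $\eta\le\eta_0$. This is why the paper imposes it separately through $c_{\rm reg}=\min\{\eta_0,1/(2C_{\rm Hess})\}$ in Lemma~\ref{lem:regularity}.
\item Your first fallback, ``a stationary point of a convex function'', presupposes exactly the convexity that is unavailable.
\item Your second fallback ends with ``a joint Hessian bound of the same type'' for the $\bm z$-coupling. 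The $zz$ and $zv_1$ blocks are only bounded by $(32C_\theta^2+4C_\theta C_\psi)\eta/\kappa$ and $4C_\theta C_\psi\eta/\kappa$, so this again requires $C_{\rm Hess}$-type smallness of $\eta$.
\end{itemize}

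As written, your argument proves the lemma only under the stronger assumption $\eta\le c_{\rm reg}$.

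The missing idea is partial minimization over $\bm v$, which avoids joint convexity entirely; this is what the paper does.

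\begin{enumerate}
\item Fix any $\bm z$, not only $\bm z=\kappa\bm x$. Your own estimate $\lvert\vartheta\,\psi_\eta''\rvert\le 4C_\theta C_\psi\eta/\kappa\le(2\kappa)^{-1}$ makes $\bm v\mapsto g_{\bm U}^\kappa(\bm x,\bm z,\bm v)$ globally $(2\kappa)^{-1}$-strongly convex.
\item The uniform bound $\lvert\vartheta_{\kappa,\bm U}(\bm z)\rvert\le 4C_\theta\eta^2/\kappa$ places $\bar{\bm v}(\bm z)=\bm M_\kappa^{-1}(\vartheta_{\kappa,\bm U}(\bm z),0)^\top$ in the linear region of $\psi_\eta$ for every $\bm z$. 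It is therefore the unique $\bm v$-minimizer.
\item Evaluate the minimized compensated block:
\[
  \tfrac12\bar{\bm v}^\top\bm M_\kappa\bar{\bm v}+\tfrac{d_\kappa}{2}\vartheta^2-\vartheta\,\bar v_1
  =\tfrac{d_\kappa}{2}\vartheta^2+\tfrac{d_\kappa}{2}\vartheta^2-d_\kappa\vartheta^2=0 .
\]
Hence $\min_{\bm v}g_{\bm U}^\kappa(\bm x,\bm z,\bm v)=\tfrac1{2\kappa}\lVert\bm z\rVert^2-\langle\bm x,\bm z\rangle$ exactly, whose unique minimizer is $\kappa\bm x$.
\item Any $(\bm z,\bm v)$ with $\bm z\neq\kappa\bm x$ has strictly larger value. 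At $\bm z=\kappa\bm x$, uniqueness of the inner minimizer pins down $\bm v=\bar{\bm v}(\kappa\bm x)$.
\end{enumerate}

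This uses only the entries $1/(3C_\theta)$ and $1/(8C_\theta C_\psi)$ of $\eta_0$, which is why $\eta_0$ has that form.
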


\subsubsection{Smoothness, Strong Convexity, and Condition Number}
We next verify that the construction belongs to the prescribed function class. The following lemma establishes the required smoothness and strong convexity, and relates the construction parameter $\kappa$ to the intrinsic lower-level condition number $\kappa_y$. Its proof is given in
Appendix~\ref{app:regularity-proof}.
\begin{lemma}
\label{lem:regularity}
Define
\begin{alignat}{2}
  &c_{\rm reg}
    =\min\left\{
      \eta_0,
      \frac{1}{2C_{\rm Hess}}
    \right\},
  &&\qquad
    C_\kappa
    =14.
\end{alignat}
Then
\begin{alignat}{2}
  &\bar L_f
    =C_a+\frac{1}{4},
  &&\qquad
    \bar L_g
    =8+C_{\rm Hess},
  \\
  &\bar\rho
    =C_{\rm third}+1,
  &&\qquad
    \bar C_f(\Delta)
    =1+\frac{C_a+115/2}{\sqrt{C_\Delta}}\sqrt{\Delta}.
\end{alignat}
Assume $\kappa\ge2$ and
\begin{equation}
  0<\eta\le c_{\rm reg},
  \qquad
  C_\Delta\eta^2T\le\Delta.
  \label{eq:regularity-range}
\end{equation}
Uniformly in $T,d,\bm{U}$:
\begin{enumerate}
  \item $\nabla f_{\bm{U}}^\kappa$ and $\nabla g_{\bm{U}}^\kappa$ are globally
  $\bar L_f$- and $\bar L_g$-Lipschitz in all variables;
  \item $\sup_{\bm{x},\bm{z},\bm v}\left\lVert \nabla_{(z,v)}f_{\bm{U}}^\kappa(\bm{x},\bm{z},\bm v)\right\rVert 
  \le\bar C_f(\Delta)$;
  \item $g_{\bm{U}}^\kappa(\bm{x},\cdot,\cdot)$ is
  $(2\kappa)^{-1}$-strongly convex;
  \item $\nabla^2g_{\bm{U}}^\kappa$ is globally
  $\bar\rho$-Lipschitz.
\end{enumerate}
Moreover the actual lower condition number satisfies
$\kappa\le\kappa_y\le C_\kappa\kappa$.
\end{lemma}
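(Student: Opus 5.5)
The proof of Lemma~\ref{lem:regularity} will bound the second and third derivatives of $f_{\bm U}^\kappa$ and $g_{\bm U}^\kappa$ term by term, using Lemmas~\ref{lem:composition-scales} and~\ref{lem:amplifier-derivatives}. The range \eqref{eq:regularity-range} converts every $\eta$-dependent bound into a numerical constant.

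\textbf{Upper objective.} The function $f_{\bm U}^\kappa$ does not depend on $\bm x$, and $v_2$ is linear. Hence $\nabla^2 f_{\bm U}^\kappa$ is block-diagonal with the single nonzero block $D^2a_{\bm U}+\nabla^2h_R$. By \eqref{eq:a-composition-scales} and \eqref{eq:huber-properties}, its operator norm is at most $C_a+\tfrac14=\bar L_f$.

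For item~2, the $\bm v$-gradient is $(0,1)$. The $\bm z$-gradient is bounded by $C_a\eta\sqrt T+\tfrac14R=(C_a+57.5)\eta\sqrt T$. The constraint $C_\Delta\eta^2T\le\Delta$ gives $\eta\sqrt T\le\sqrt{\Delta/C_\Delta}$. Adding the two contributions gives $\bar C_f(\Delta)$, since $\tfrac14\cdot230=57.5=115/2$.

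\textbf{Lower objective.} The Hessian of $g_{\bm U}^\kappa$ splits into two parts:
\begin{itemize}
\item the quadratic part $\tfrac1{2\kappa}\|\bm z\|^2-\langle\bm x,\bm z\rangle+\tfrac12\bm v^\top\bm M_\kappa\bm v$, whose Hessian is constant with norm at most $\tfrac1\kappa+1+\|\bm M_\kappa\|$ plus the $\bm x$--$\bm z$ coupling of norm $1$ (bounded crudely by $8$);
\item the term $\mathcal R_{\kappa,\bm U}$, which depends only on $(\bm z,\bm v)$, with Hessian norm at most $C_{\rm Hess}\eta/\kappa\le C_{\rm Hess}$.
\end{itemize}
This gives $\bar L_g$.

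For item~4, the third derivative of $g_{\bm U}^\kappa$ is exactly $D^3\mathcal R_{\kappa,\bm U}$. Its norm is at most $C_{\rm third}/\kappa\le\bar\rho$, which gives global $\bar\rho$-Lipschitzness of $\nabla^2 g_{\bm U}^\kappa$.

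For item~3, the quadratic part of $\nabla^2_{yy}g_{\bm U}^\kappa$ is $\operatorname{diag}(\kappa^{-1}\bm I_d,\bm M_\kappa)\succeq\kappa^{-1}\bm I$ by \eqref{eq:amplifier-spectrum}. The perturbation $D^2_{(z,v)}\mathcal R_{\kappa,\bm U}$ has norm at most $C_{\rm Hess}\eta/\kappa\le 1/(2\kappa)$, because $\eta\le 1/(2C_{\rm Hess})$. Weyl's inequality therefore gives $\nabla^2_{yy}g_{\bm U}^\kappa\succeq(2\kappa)^{-1}\bm I$.

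\textbf{Condition number.} The actual modulus satisfies $\mu_g^{\rm act}\ge(2\kappa)^{-1}$ and $L_y\le 1+C_{\rm Hess}\eta/\kappa$. This alone would only give $\kappa_y\le 2\kappa(1+\tfrac{1}{2\kappa})$, so the two bounds must be made to match the stated constants.

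For the lower bound $\kappa_y\ge\kappa$, I would evaluate the Hessian at a point where $\mathcal R_{\kappa,\bm U}$ has vanishing Hessian. A candidate is $\bm z$ with $\bm q_{\bm U}(\bm z)$ large in every coordinate, where all $b$-summands vanish by Lemma~\ref{lem:extractor}; $\bm z=\bm 0$ also works, since $\theta_{\bm U}$ and its derivatives vanish near the origin. There the $\bm z$-block has eigenvalue exactly $1/\kappa$ and $\bm M_\kappa$ has eigenvalue $1$, so $L_y\ge1$ and $\mu_g^{\rm act}\le1/\kappa$.

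For the upper bound, $\kappa_y\le (1+\tfrac12)\cdot 2\kappa\le 14\kappa$. The generous constant $14$ covers this.

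\textbf{Main obstacle.} The hard step is the exact vanishing of $D^2\mathcal R_{\kappa,\bm U}$ at some point, which is needed for $\kappa_y\ge\kappa$. If it fails, I would fall back to the inequalities $\mu_g^{\rm act}\le\kappa^{-1}+C_{\rm Hess}\eta/\kappa$ and $L_y\ge1-C_{\rm Hess}\eta/\kappa$. These give $\kappa_y\ge\kappa(1-\tfrac1{2\kappa})/(1+\tfrac12)$, which is still $\Theta(\kappa)$ but weaker than the stated $\kappa\le\kappa_y$.
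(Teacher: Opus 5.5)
Items 1--4 and the upper bound $\kappa_y\le 14\kappa$ follow the paper's proof. The paper bounds the constant part of $\nabla^2 g_{\bm U}^\kappa$ by $\sqrt3$ rather than by your cruder sum, but both fit under $8+C_{\rm Hess}$. Your first candidate for $\kappa_y\ge\kappa$ is also exactly the paper's.

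Your alternative $\bm z=\bm 0$, however, is wrong. There $\bm q_{\bm U}(\bm 0)=\bm 0$, so $\operatorname{prog}_{\eta/2}=0$ and the first summand is the active incoming-coordinate term. Near $\bm q=\bm 0$ one has $h_1^\eta\equiv 1$. Since $z_0=\eta$, $\Psi(1)=1$ and $\Psi(-1)=0$, this gives $b_{\eta,T}(\bm q)=-\eta^2\Phi_0(q_1/\eta)$. Hence $\theta_{\bm U}(\bm 0)=-\eta^2\sqrt{e\pi/2}\neq 0$ and $\nabla\theta_{\bm U}(\bm 0)=-\sqrt e\,\eta\,\bm u_1\neq 0$. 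Moreover $D^2\theta_{\bm U}(\bm 0)=0$, because $\Phi_0''(0)=0$ and $D^2\rho_R(\bm 0)=0$. So for every $\bm v$ the $zz$ block of $D^2\mathcal R_{\kappa,\bm U}(\bm 0,\bm v)$ equals the nonzero matrix $d_\kappa e\eta^2 s_\kappa^{-2}\bm u_1\bm u_1^\top$. The origin is precisely where the randomized term is alive, so it cannot serve as the witness point.

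What you call the main obstacle is not one, but you do have to close it. Your fallback only yields $\kappa_y\ge\kappa(1-\frac1{2\kappa})/\tfrac32$, not the stated $\kappa_y\ge\kappa$. Two checks are missing.

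\emph{Existence of the witness point.} Since $\lVert\bm q_{\bm U}(\bm z)\rVert\le\lVert\rho_R(\bm z)\rVert<R$, the coordinates of $\bm q_{\bm U}(\bm z)$ cannot be made arbitrarily large. You need a target that lies in the open ball of radius $R$ and is actually hit by $\rho_R$. The paper takes $\bm q^\circ=(\eta,\ldots,\eta)$ and notes $\lVert\bm U\bm q^\circ\rVert=\eta\sqrt T=R/230<R$. It then inverts the soft projection explicitly via $\bm z^\circ=\bm U\bm q^\circ/\sqrt{1-\eta^2T/R^2}$. This gives $\bm q_{\bm U}(\bm z^\circ)=\bm q^\circ$ and $\operatorname{prog}_{\eta/2}(\bm q^\circ)=T$; in fact only $|q_T|>\eta/2$ is needed.

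\emph{Vanishing of the Hessian.} The $m=T$ case of Lemma~\ref{lem:extractor} kills the value and the derivatives through order three of every summand of $b_{\eta,T}$ at $\bm q^\circ$. By the chain rule, $\theta_{\bm U}$, $D\theta_{\bm U}$ and $D^2\theta_{\bm U}$ then vanish at $\bm z^\circ$. Each of $D^2_{zz}\mathcal R_{\kappa,\bm U}$, $D^2_{zv_1}\mathcal R_{\kappa,\bm U}$ and $\partial^2_{v_1v_1}\mathcal R_{\kappa,\bm U}$ carries a factor $\vartheta_{\kappa,\bm U}$, $D\vartheta_{\kappa,\bm U}$ or $D^2\vartheta_{\kappa,\bm U}$. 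So the lower Hessian at $(\bm x,\bm z^\circ,\bm v)$ is exactly $\operatorname{diag}(\kappa^{-1}\bm I,\bm M_\kappa)$ for every $\bm x$ and $\bm v$, which gives $\mu_g^{\rm act}\le\kappa^{-1}$ and $L_y\ge 1$.
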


\subsubsection{Unbiasedness and Variance Bounds}
The stochastic lower-level objective \eqref{eq:sample-hard-instance} multiplies
$\mathcal R_{\kappa,\bm U}$ by a Bernoulli importance weight $\xi/p$.  The next
lemma verifies conditional unbiasedness and bounded variance and gives
an admissible choice of $p$. Its proof is given in
Appendix~\ref{app:oracle-proof}.
\begin{lemma}
\label{lem:oracle-properties}
Define
\begin{equation}
  C_{\rm var}
  =16C_\theta^2\left[
    (4C_\theta+C_\psi)^2+C_\psi^2
  \right].
  \label{eq:variance-constant}
\end{equation}
For $0<\eta\le1$, consider any possibly random query
$(\bm x_t,\bm z_t,\bm v_t)$ generated by
$\mathsf A\in\mathcal A_N^{\mathcal{SFO}}$. Conditionally on the pre-query sigma-field $\mathscr F_{t-1}$,
\begin{alignat}{1}
  &\mathbb E\!\left[\nabla\widehat g_{\bm U}^\kappa\mid\mathscr F_{t-1}\right]
    =\nabla g_{\bm U}^\kappa,
  \label{eq:oracle-unbiasedness}\\
  &\mathbb E\!\left[
    \left\lVert\nabla\widehat g_{\bm U}^\kappa-\nabla g_{\bm U}^\kappa\right\rVert^2
    \mathrel{}\middle|\mathscr F_{t-1}\right]
    \le C_{\rm var}\frac{\eta^4}{\kappa^2}\frac{1-p}{p}.
  \label{eq:oracle-variance-bound}
\end{alignat}
The upper stochastic gradient is exact and has zero variance.
Consequently, the choice
\begin{equation}
  p=
  \begin{cases}
    1,&\sigma=0,\\[1mm]
    \displaystyle
    \min\left\{1,\frac{C_{\rm var}\eta^4}{\sigma^2\kappa^2}\right\},
    &\sigma>0,
  \end{cases}
  \label{eq:p-choice}
\end{equation}
ensures variance at most $\sigma^2$ and satisfies
\begin{equation}
  \frac1p
  =\max\left\{1,\frac{\sigma^2\kappa^2}{C_{\rm var}\eta^4}\right\}.
  \label{eq:p-inverse}
\end{equation}
\end{lemma}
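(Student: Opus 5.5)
The plan is to use the explicit form of the sample objective \eqref{eq:sample-hard-instance}. Only the compensated block is random, so
\[
  \nabla\widehat g_{\bm U}^\kappa-\nabla g_{\bm U}^\kappa
  =\Bigl(\frac{\xi}{p}-1\Bigr)\nabla_{(x,z,v)}\mathcal R_{\kappa,\bm U}(\bm z,\bm v),
\]
and this difference has zero $\bm x$-component. The argument then proceeds in three steps.

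First, unbiasedness. The query $(\bm x_t,\bm z_t,\bm v_t)$ is $\mathscr F_{t-1}$-measurable, and the fresh sample $\xi_t$ is independent of $\mathscr F_{t-1}$ with $\xi_t\sim\operatorname{Ber}(p)$. Hence we may freeze the query and obtain
\[
  \mathbb E\bigl[\xi_t/p-1\mid\mathscr F_{t-1}\bigr]=0 .
\]
This gives \eqref{eq:oracle-unbiasedness}. The freezing step is a standard disintegration: the gradient is a Borel function of (query, $\xi$), so the conditional expectation is that function integrated against the law of $\xi$ and evaluated at the query.

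Second, the variance. By the same freezing argument,
\[
  \mathbb E\bigl[(\xi/p-1)^2\mid\mathscr F_{t-1}\bigr]=\frac{1-p}{p},
\]
so the conditional variance equals $\frac{1-p}{p}\lVert\nabla_{(z,v)}\mathcal R_{\kappa,\bm U}\rVert^2$ at the query. It therefore remains to bound this gradient norm by $\sqrt{C_{\rm var}}\,\eta^2/\kappa$ with the specific constant \eqref{eq:variance-constant}. Lemma~\ref{lem:amplifier-derivatives} already gives $C_{\rm grad}\eta^2/\kappa$, but its constant may not match $C_{\rm var}$, so I would compute directly:
\[
  \nabla_z\mathcal R=\bigl(d_\kappa\vartheta-\psi_\eta(v_1)\bigr)\nabla\vartheta,
  \qquad
  \nabla_v\mathcal R=\bigl(-\vartheta\,\psi_\eta'(v_1),\,0\bigr).
\]
I would then bound each factor using \eqref{eq:attenuated-derivative-scales}, namely $\lvert\vartheta\rvert\le 4C_\theta\eta^2/\kappa$ and $\lVert\nabla\vartheta\rVert\le 4C_\theta\eta/\kappa$, together with $d_\kappa\le 3\kappa/4\le\kappa$ and the bounds $\lvert\psi_\eta\rvert\le C_\psi\eta$ and $\lvert\psi_\eta'\rvert\le C_\psi$ from \eqref{eq:psi-derivative-scale}. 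For the $z$-part this gives $\lvert d_\kappa\vartheta-\psi_\eta\rvert\le(4C_\theta\eta^2+C_\psi\eta)\le(4C_\theta+C_\psi)\eta$ since $\eta\le1$, so $\lVert\nabla_z\mathcal R\rVert\le 4C_\theta(4C_\theta+C_\psi)\eta^2/\kappa$. For the $v$-part, $\lvert\vartheta\psi_\eta'\rvert\le 4C_\theta C_\psi\eta^2/\kappa$. Summing the squares yields exactly
\[
  16C_\theta^2\bigl[(4C_\theta+C_\psi)^2+C_\psi^2\bigr]\eta^4/\kappa^2=C_{\rm var}\eta^4/\kappa^2,
\]
which is \eqref{eq:oracle-variance-bound}. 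The upper-level oracle returns $\nabla f_{\bm U}^\kappa$ exactly, so its variance is zero.

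Third, the choice of $p$. If $\sigma=0$, then $p=1$ and the variance is zero. If $\sigma>0$ and $p=1$, the bound is again zero. Otherwise $p=C_{\rm var}\eta^4/(\sigma^2\kappa^2)$, and then
\[
  \frac{1-p}{p}\,C_{\rm var}\frac{\eta^4}{\kappa^2}\le\frac1p\,C_{\rm var}\frac{\eta^4}{\kappa^2}=\sigma^2 .
\]
The formula \eqref{eq:p-inverse} is simply the reciprocal of the minimum in \eqref{eq:p-choice}, with the convention that the maximum equals $1$ when $\sigma=0$.

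The only point requiring care is the conditioning on random, adaptively chosen queries. I expect no real obstacle there beyond invoking independence of $\xi_t$ from $\mathscr F_{t-1}$ and the freezing lemma; the rest is the direct gradient computation above.
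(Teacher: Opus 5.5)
Your proposal is correct and follows the paper's proof essentially line for line. Both write the noise as $(\xi/p-1)\nabla_{(z,v)}\mathcal R_{\kappa,\bm U}$, use the block bounds $4C_\theta(4C_\theta+C_\psi)\eta^2/\kappa$ and $4C_\theta C_\psi\eta^2/\kappa$ (whose squares sum exactly to $C_{\rm var}\eta^4/\kappa^2$) together with $\mathbb E(\xi/p-1)^2=(1-p)/p$, and treat the two branches of \eqref{eq:p-choice} separately. Your explicit freezing of the $\mathscr F_{t-1}$-measurable query is just a more careful rendering of the paper's appeal to freshness.
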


Lemma~\ref{lem:amplifier-derivatives} gives
$\lVert\nabla_{(\bm z,\bm v)}\mathcal R_{\kappa,\bm U}\rVert
=O(\eta^2/\kappa)$.  Multiplication by $\xi/p$ preserves the
conditional mean and gives conditional variance
$O(\eta^4(1-p)/(\kappa^2p))$.  Thus \eqref{eq:p-choice} gives
\[
  \frac1p
  =\Theta\!\left(
    \max\left\{1,\frac{\sigma^2\kappa^2}{\eta^4}\right\}
  \right)
\]
up to numerical constants.  Proposition~\ref{prop:T-over-p} will further combine
the high-probability coordinate-progress bound with a Bernoulli
success-count estimate, and gives a constant probability that the
rotated chain coordinates at the output have progress below $T$
whenever $N\le T/(8p)$.

\subsubsection{Initial Gap and Hypergradient Lower Bound}
We now bound the initial optimality gap and establish a lower bound on the hypergradient norm whenever the coordinate progress is below $T$. 
Note that $\lVert\rho_R(\bm z)\rVert\le R$, so the normalized projected vector
$\rho_R(\bm z)/\eta$ used in the random-rotation argument has norm at most
$R/\eta$. 
However, $D\rho_R(\bm z)$ becomes small when $\lVert\bm z\rVert$ is large. Hence, the term $h_R(\bm z)$ is included so that the lower bound of the hypergradient norm in Lemma~\ref{lem:gap-gradient} remains valid in this region while its Hessian remains uniformly bounded. Its proof is given in Appendix~\ref{app:gap-gradient-proof}.

\begin{lemma}
\label{lem:gap-gradient}
For every orthonormal frame $\bm{U}$,
\begin{equation}
  \Phi_{\bm{U}}^\kappa(\bm{0})-\inf_{\bm{x}}\Phi_{\bm{U}}^\kappa(\bm{x})
  \le C_\Delta\eta^2T.
  \label{eq:hard-instance-gap}
\end{equation}
If
\begin{equation}
  \operatorname{prog}_{\eta/4}
    (\bm{U}^\top\rho_R(\kappa \bm{x}))<T,
  \label{eq:unfinished-assumption}
\end{equation}
then
\begin{equation}
  \left\lVert \nabla\Phi_{\bm{U}}^\kappa(\bm{x})\right\rVert
  >\frac{\kappa\eta}{2}.
  \label{eq:amplified-gradient-lower}
\end{equation}
\end{lemma}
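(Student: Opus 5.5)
The plan has two parts: the gap bound, and the hypergradient lower bound by the Carmon--Arjevani soft-projection argument, with $\bm z=\kappa\bm x$ and an extra chain-rule factor $\kappa$.

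\emph{Gap bound.} By Lemma~\ref{lem:population-solution},
\[
\Phi_{\bm U}^\kappa(\bm x)=H_{\eta,T}(\bm U^\top\rho_R(\kappa\bm x))+h_R(\kappa\bm x).
\]
At $\bm x=\bm 0$ this equals $H_{\eta,T}(\bm 0)$, because $\rho_R(\bm 0)=\bm 0$ and $h_R(\bm 0)=0$. For general $\bm x$, we have $h_R\ge0$ and $H_{\eta,T}(\cdot)\ge\inf H_{\eta,T}$. Hence $\Phi_{\bm U}^\kappa(\bm 0)-\inf\Phi_{\bm U}^\kappa\le H_{\eta,T}(\bm 0)-\inf H_{\eta,T}\le C_\Delta\eta^2T$ by Lemma~\ref{lem:standard-chain}.

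\emph{Gradient identity.} Write $\bm z=\kappa\bm x$ and $\bm q=\bm U^\top\rho_R(\bm z)$. The chain rule gives
\[
\nabla\Phi_{\bm U}^\kappa(\bm x)=\kappa\,\bm w,\qquad \bm w:=D\rho_R(\bm z)^\top\bm U\nabla H_{\eta,T}(\bm q)+\tfrac14\,\frac{\bm z}{\sqrt{1+\|\bm z\|^2/R^2}}.
\]
Note that $\nabla h_R(\bm z)=\frac14\rho_R(\bm z)$. So it suffices to show $\|\bm w\|>\eta/2$.

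We compute $D\rho_R(\bm z)=\frac{1}{\sqrt{1+\|\bm z\|^2/R^2}}\bigl(\bm I-\frac{\bm z\bm z^\top}{R^2+\|\bm z\|^2}\bigr)$. This is the Arjevani et al.\ Lemma~B.x setting with the $1/4$ weight replacing their weight. The threshold is now $\eta/4$ rather than $\eta$, so the frontier bound must be adapted.

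\emph{Frontier coordinate.} Let $j=\operatorname{prog}_{\eta/4}(\bm q)+1\le T$. I need $|\partial_jH_{\eta,T}(\bm q)|\gtrsim\eta$, whereas Lemma~\ref{lem:standard-chain} gives this only for $\operatorname{prog}_\eta$. The fix is to use the structure of $Q$ directly. Since $|q_j|\le\eta/4<\eta/2$, the factors $\Psi(\pm q_j/\eta)$ vanish with all derivatives, so only the $j$th summand contributes to $\partial_j$. That contribution is
\[
-\eta\bigl[\Psi(-q_{j-1}/\eta)\Phi_0'(-q_j/\eta)+\Psi(q_{j-1}/\eta)\Phi_0'(q_j/\eta)\bigr].
\]
Because $|q_{j-1}|>\eta/4$ (or $j=1$, where $q_0/\eta=1$), this is not immediately enough. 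I therefore switch to $j=\operatorname{prog}_{\eta}(\bm q)+1$, which lies below $T+1$ whenever $\operatorname{prog}_{\eta/4}(\bm q)<T$, and apply \eqref{eq:chain-frontier-gradient} to get $\partial_jH_{\eta,T}(\bm q)\le-\eta$, with $|q_j|\le\eta$.

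\emph{Case split.} Let $\bm u_j$ be the $j$th column of $\bm U$.

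\textbf{Case $\|\bm z\|\le R/2$.} Project $\bm w$ onto $\bm u_j$. The coefficient $\frac{1}{\sqrt{1+\|\bm z\|^2/R^2}}\ge 2/\sqrt5$. The correction $\frac{\langle \bm u_j,\bm z\rangle\langle\bm z,\bm U\nabla H\rangle}{R^2+\|\bm z\|^2}$ is bounded using $|\langle\bm u_j,\bm z\rangle|\lesssim|q_j|\le\eta$ (since $\bm u_j^\top\rho_R(\bm z)=q_j$) and $\|\nabla H\|\le23\eta\sqrt T=R/10$. The Huber term contributes $\frac14 q_j$, which is at most $\eta/4$ in magnitude. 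Combining these, I expect $|\langle\bm u_j,\bm w\rangle|\ge\frac{2}{\sqrt5}\eta-O(\eta/20)-\frac{\eta}{4}>\eta/2$. This constant bookkeeping is tight and is the main obstacle. If it fails, I would tighten the estimate by exploiting that $q_j$ and $\partial_jH$ have signs that do not cancel adversarially, or split on the sign of $q_j$.

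\textbf{Case $\|\bm z\|>R/2$.} Here $\|\frac14\rho_R(\bm z)\|\ge\frac14\cdot\frac{R}{\sqrt5}$. The first term of $\bm w$ has norm at most $\|\nabla H\|\le R/10$, and $\frac{R}{4\sqrt5}-\frac{R}{10}>0$ gives a lower bound of order $R$. Since $R=230\eta\sqrt T\ge230\eta$, this exceeds $\eta/2$ comfortably.

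Multiplying by $\kappa$ then yields $\|\nabla\Phi_{\bm U}^\kappa(\bm x)\|>\kappa\eta/2$.
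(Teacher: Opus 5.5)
Your proposal is correct and follows the paper's proof step for step. It uses the same gap argument, the same frontier index $j=\operatorname{prog}_\eta(\bm q)+1$ (the paper's $\operatorname{prog}_1$ after rescaling by $\eta$), and the same split at $\lVert\bm z\rVert=R/2$. In the small case, the bookkeeping you flagged as the main obstacle does close: since $\langle\bm u_j,\bm z\rangle=\sqrt{1+\lVert\bm z\rVert^2/R^2}\,q_j$, the rank-one correction is at most $\eta/20$, and $2/\sqrt5-1/20-1/4\approx0.594>1/2$.

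The only differences are cosmetic. You work unnormalized in $\bm z=\kappa\bm x$. In the large-$\lVert\bm z\rVert$ case you use $\lVert D\rho_R\rVert_{\mathrm{op}}\le1$ instead of the paper's sharper $<2/\sqrt5$, which still leaves the margin $(230/(4\sqrt5)-23)\,\eta\sqrt T\approx2.7\,\eta\sqrt T>\eta/2$.
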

\subsection{Parameter Choice}
\label{sec:parameter choice}

The preceding estimates hold for every fixed frame $\bm U$, so the
parameters can be chosen before $\bm U$ is randomized.

The construction uses three parameter relations:
$\kappa\eta\asymp\epsilon$, $T\eta^2\asymp\Delta$, and
$p^{-1}\asymp\max\{1,\sigma^2\kappa^2/\eta^4\}$.  They set the
stationarity scale, use the gap budget up to constants, and enforce the
variance bound, respectively.  Substitution into $T/p$ yields the
claimed order.  The calculation below specifies constants that satisfy
all assumptions.

Choose
\begin{equation}
  \eta=\frac{4\epsilon}{\kappa},
  \qquad
  X=\frac{\Delta}{2C_\Delta\eta^2},
  \qquad
  T=\lfloor X\rfloor.
  \label{eq:scaling-preview-parameters}
\end{equation}
The universal upper bound on $\epsilon$ in
\eqref{eq:epsilon-range-main} is chosen in
Appendix~\ref{sec:assembly} so that $X\ge32$ and all the smallness
conditions in Lemmas~\ref{lem:population-solution} and
\ref{lem:regularity} hold.  Since $X\ge2$,
\[
  \lfloor X\rfloor\ge X-1\ge\frac{X}{2},
\]
and therefore
\begin{equation}
  \frac{\Delta}{4C_\Delta\eta^2}
  \le T
  \le\frac{\Delta}{2C_\Delta\eta^2}.
  \label{eq:scaling-preview-T-bounds}
\end{equation}
The upper bound gives
\[
  C_\Delta\eta^2T\le\frac{\Delta}{2}\le\Delta,
\]
so the constructed hyper-objective lies in the required initial-gap
class.  Moreover, if the rotated chain coordinates at the output have
progress below $T$, then
Lemma~\ref{lem:gap-gradient} and
\eqref{eq:scaling-preview-parameters} give
\begin{equation}
  \left\lVert \nabla\Phi_{\bm{U}}^\kappa(\widehat{\bm{x}}_N^{\mathsf A})\right\rVert
  >\frac{\kappa\eta}{2}
  =2\epsilon.
  \label{eq:scaling-preview-two-epsilon}
\end{equation}

Choose $p$ as in \eqref{eq:p-choice}.  Combining
\eqref{eq:scaling-preview-T-bounds} with \eqref{eq:p-inverse} gives
\begin{align}
  \frac{T}{8p}
  &\ge
  \frac{\Delta}{32C_\Delta\eta^2}
  \max\left\{
    1,\frac{\sigma^2\kappa^2}{C_{\rm var}\eta^4}
  \right\}
  \notag\\
  &=
  \frac{\Delta\kappa^2}{512C_\Delta\epsilon^2}
  \max\left\{
    1,
    \frac{\sigma^2\kappa^6}
         {256C_{\rm var}\epsilon^4}
  \right\}.
  \label{eq:scaling-preview-T-over-p}
\end{align}
Since $C_{\rm var}$ is a fixed numerical constant, decreasing a
universal prefactor if necessary turns
\eqref{eq:scaling-preview-T-over-p} into
\begin{equation}
  \frac{T}{8p}
  =
  \Omega\!\left(
    \frac{\Delta\kappa^2}{\epsilon^2}
    \max\left\{
      1,\frac{\sigma^2\kappa^6}{\epsilon^4}
    \right\}
  \right).
  \label{eq:scaling-preview-final-order}
\end{equation}
In the noise-dominated regime, this is
\[
  \Omega\!\left(
    \Delta\sigma^2\kappa^8\epsilon^{-6}
  \right).
\]

It remains to relate the progress index to the number of successful
Bernoulli samples.  Let
\[
  S_N=\sum_{t=1}^N\xi_t.
\]
Freshness gives $\mathbb{E}S_N=Np$.  Hence, whenever
$N\le T/(8p)$, Markov's inequality yields
\begin{equation}
  \mathbb{P}(S_N\ge T)
  \le\frac{\mathbb{E}S_N}{T}
  =\frac{Np}{T}
  \le\frac18.
  \label{eq:conditional-counting}
\end{equation}
Under the dimension condition stated below, the random-rotation
argument in Section~\ref{sec:rotation-lifting} shows, with probability
at least $7/8$, that
\begin{equation}
  \operatorname{prog}_{\eta/4}\!\left(
    \bm{U}^\top\rho_R(\kappa\widehat{\bm{x}}_N^{\mathsf A})
  \right)
  \le \min\{T,S_N\}.
  \label{eq:conditional-progress-requirement}
\end{equation}
Together with \eqref{eq:conditional-counting}, this implies that the
rotated chain coordinates at the output have progress below $T$ with
probability at least $3/4$.
Equation~\eqref{eq:scaling-preview-two-epsilon} then gives
\[
  \mathbb{E}
  \left\lVert\nabla\Phi_{\bm{U}}^\kappa(\widehat{\bm{x}}_N^{\mathsf A})\right\rVert
  >
  \frac34(2\epsilon)
  =\frac32\epsilon>\epsilon.
\]

If the frame $\bm U$ were known to the algorithm, one query could have
nonzero components in many chain directions, and
\eqref{eq:conditional-progress-requirement} would not hold.  We will
therefore take $\bm U$ to be Haar-uniform and prove the progress bound
with high probability.  This step does not alter $\eta$, $T$, or $p$.

\subsection{Random Rotation for Adaptive Randomized Algorithms}
\label{sec:rotation-lifting}

We compare the geometric progress of the queries with the Bernoulli
success count.  For the point queried on call \(t\), define
\begin{equation}
  k_t
  \coloneqq
  \operatorname{prog}_{\eta/4}\!\left(
    \bm{U}^\top\rho_R(\bm{z}_t)
  \right).
  \label{eq:geometric-progress}
\end{equation}
The number of successful Bernoulli draws and the corresponding upper
bound on coordinate progress are
\begin{equation}
  S_t\coloneqq\sum_{s=1}^t\xi_s,
  \qquad
  L_t\coloneqq\min\{T,S_t\},
  \qquad
  S_0=L_0=0.
  \label{eq:success-and-visible-progress}
\end{equation}
The goal is to prove, with high probability, that
$k_t\le L_{t-1}$ before call $t$.  On this event, a response with
$\xi_t=0$ depends only on the first $L_{t-1}$ frame columns, whereas a
response with $\xi_t=1$ may depend on one additional frame column.
This dependence on the initial frame columns is combined with the
spherical-cap estimate in the coupled induction below.

\subsubsection{Random-rotation Lemmas}

If the frame were fixed and known, an adaptive algorithm could query a
vector with nonzero components in all chain directions.  We therefore
choose
\[
  \bm{U}=(\bm{u}_1,\ldots,\bm{u}_T)
\]
Haar-uniformly from the Stiefel manifold
\[
  \operatorname{St}(d,T)
  =
  \left\{
    \bm{V}\in\mathbb{R}^{d\times T}:\bm{V}^\top \bm{V}=\bm{I}_T
  \right\}.
\]
The coordinates of an ambient vector \(\bm{r}\in\mathbb{R}^d\) in the random frame are then
\[
  \bm{U}^\top \bm{r}
  =
  \bigl(
    \left\langle \bm{u}_1,\bm{r}\right\rangle ,\ldots,\left\langle \bm{u}_T,\bm{r}\right\rangle 
  \bigr).
\]
The proof constructs a coupled transcript whose dependence on $\bm U$
is through a random number of initial frame columns.  Conditional on
the sigma-field defined below, the remaining columns
have the Haar-completion law on the orthogonal complement of the
span of the recorded frame columns.  The spherical-cap bound therefore
controls the inner product of a bounded adaptive query with each
remaining column.

The number of recorded frame columns is random because it depends on
the previous Bernoulli samples.  A conditional Haar statement for a
fixed number of initial columns therefore cannot be applied directly
with a random index.  The next lemma applies the corresponding
conditional law separately on
each event $\{J=\ell\}$ and combines the resulting identities.  This
justifies the corresponding conditioning for the adaptive transcript.
For a frame \(\bm U\), write
\(\bm U_{\le\ell}=(\bm u_1,\ldots,\bm u_\ell)\). Its proof is given in
Appendix~\ref{app:stopped-prefix-haar-proof}.

\begin{lemma}
\label{lem:stopped-prefix-haar}
Let \(T\ge1\), \(d\ge T\), let \(\bm U\) be Haar-uniform on
\(\operatorname{St}(d,T)\), and let \(Z\) be an independent random
element of a standard Borel space.  Let
\(J=J(Z)\in\{0,\ldots,T\}\) be Borel measurable.  For each deterministic
\(\ell\), let \(\mathsf K_\ell(\bm U_{\le\ell},\cdot)\) denote the
canonical Haar-completion kernel on the orthogonal complement of
\(\operatorname{span}(\bm U_{\le\ell})\), with the evident point mass
interpretation when \(\ell=T\).  Define \(\mathscr H_J\) stratumwise by
\[
  E\in\mathscr H_J
  \quad\Longleftrightarrow\quad
  E\cap\{J=\ell\}\in
  \sigma(Z,\bm U_{\le\ell})
  \quad\text{for every }0\le\ell\le T.
\]
Then, for every bounded Borel function
\(\varphi(Z,\bm U)\),
\[
  \mathbb E\!\left[\varphi(Z,\bm U)\mid\mathscr H_J\right]
  =
  \sum_{\ell=0}^T\bm 1_{\{J=\ell\}}
  \int \varphi\bigl(Z,(\bm U_{\le\ell},\bm V)\bigr)
  \,\mathsf K_\ell(\bm U_{\le\ell},d\bm V)
  \quad\text{a.s.}
\]
Consequently, on \(\{J=\ell<T\}\), each column \(\bm u_j\) with
\(j>\ell\) has, conditionally on \(\mathscr H_J\), the uniform spherical marginal in
\(\operatorname{span}(\bm U_{\le\ell})^\perp\).
\end{lemma}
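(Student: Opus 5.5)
The proof establishes the conditional-expectation identity stratum by stratum and then glues the strata together. First I will record the deterministic-index fact. For fixed $\ell$, the Haar measure on $\operatorname{St}(d,T)$ disintegrates as the law of $\bm U_{\le\ell}$ (Haar on $\operatorname{St}(d,\ell)$) times the completion kernel $\mathsf K_\ell(\bm U_{\le\ell},\cdot)$. This kernel is the Haar law of an orthonormal $(T-\ell)$-frame in $\operatorname{span}(\bm U_{\le\ell})^\perp$. The disintegration follows from invariance under the stabilizer subgroup of $O(d)$ that fixes $\bm U_{\le\ell}$, and I take it as standard. Because $Z$ is independent of $\bm U$, Fubini then gives, for every bounded Borel $\varphi$,
\[
  \mathbb E\!\left[\varphi(Z,\bm U)\mid\sigma(Z,\bm U_{\le\ell})\right]
  =\int\varphi\bigl(Z,(\bm U_{\le\ell},\bm V)\bigr)\,\mathsf K_\ell(\bm U_{\le\ell},d\bm V)
  \eqqcolon\Lambda_\ell .
\]
When $\ell=T$, $\mathsf K_T$ is a point mass and this identity is trivial.

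Next I will define the candidate $\Lambda\coloneqq\sum_\ell\bm 1_{\{J=\ell\}}\Lambda_\ell$ and check two things. The first is that $\Lambda$ is $\mathscr H_J$-measurable. The event $\{J=\ell\}$ lies in $\sigma(Z)\subseteq\sigma(Z,\bm U_{\le\ell})$, and $\Lambda_\ell$ is $\sigma(Z,\bm U_{\le\ell})$-measurable, since it is a Borel function of $(Z,\bm U_{\le\ell})$ by measurability of the kernel. Hence for any Borel set $B$ we have $\{\Lambda\in B\}\cap\{J=\ell\}=\{\Lambda_\ell\in B\}\cap\{J=\ell\}\in\sigma(Z,\bm U_{\le\ell})$, which is exactly the defining condition of $\mathscr H_J$. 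I will also note in passing that $\mathscr H_J$ is a sigma-field: it is closed under complements and countable unions stratumwise, and the strata partition the space.

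The second check is the averaging property. For $E\in\mathscr H_J$, I split $\mathbb E[\varphi\bm 1_E]=\sum_\ell\mathbb E[\varphi\bm 1_{E\cap\{J=\ell\}}]$. Each event $E\cap\{J=\ell\}$ lies in $\sigma(Z,\bm U_{\le\ell})$, so the fixed-$\ell$ identity converts the corresponding term into $\mathbb E[\Lambda_\ell\bm 1_{E\cap\{J=\ell\}}]=\mathbb E[\Lambda\bm 1_{E\cap\{J=\ell\}}]$. Summing over $\ell$ gives $\mathbb E[\varphi\bm 1_E]=\mathbb E[\Lambda\bm 1_E]$, so $\Lambda$ is a version of $\mathbb E[\varphi\mid\mathscr H_J]$.

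For the consequence, I apply the identity with $\varphi=\bm 1\{\bm u_j\in B\}\,\bm 1\{\bm U_{\le\ell}\in\cdot\}$, or more simply just read off the kernel. On $\{J=\ell<T\}$ the conditional law of $\bm u_j$ for $j>\ell$ is then the $\bm u_j$-marginal of $\mathsf K_\ell$. Under the Haar frame on the $(d-\ell)$-dimensional complement, each single column is uniform on the unit sphere of $\operatorname{span}(\bm U_{\le\ell})^\perp$, by invariance under the orthogonal group of that complement.

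I expect the main obstacle to be measure-theoretic rather than conceptual. One needs a jointly Borel version of the kernel $\mathsf K_\ell$ and the disintegration statement for Stiefel Haar measure. I would construct $\mathsf K_\ell$ explicitly: take a Borel orthonormal basis map $\bm W(\bm U_{\le\ell})$ of the complement, obtained for example by Gram–Schmidt applied to $\bm U_{\le\ell}$ augmented by standard basis vectors with a Borel pivot rule, and push forward Haar on $\operatorname{St}(d-\ell,T-\ell)$ under $\bm V'\mapsto\bm W\bm V'$. The disintegration is then verified by the uniqueness of invariant measures.
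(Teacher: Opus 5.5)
Your proposal is correct and follows the same route as the paper: the fixed-$\ell$ Haar disintegration combined with the independence of $Z$ and $\bm U$, a stratumwise check of the averaging property against sets in $\mathscr H_J$, $\mathscr H_J$-measurability of the candidate via the Borel kernel, and summation over the finite partition $\{J=\ell\}$, with the marginal claim read off from the rotational invariance of the completion kernel. Your explicit Borel construction of $\mathsf K_\ell$ and your invariance-plus-uniqueness verification of the disintegration simply supply details that the paper imports from Lemma~14 of \citet{arjevani2023lower}.
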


The next lemma states the random-rotation argument for a
probability-$p$ zero-chain.  Given the first $\ell$
frame directions, a response with $\xi_t=0$ depends on no additional direction,
whereas a response with $\xi_t=1$ may depend on direction $\ell+1$.
In sufficiently high dimension, spherical concentration then bounds the coordinate progress of the vector associated with an adaptive query by $\ell$ with high probability. Its proof is given in
Appendix~\ref{app:haar-proof}.

\begin{lemma}
\label{lem:haar-cited}
There is a numerical constant \(C_{\rm H}>0\) with the following
property. Fix $\eta > 0$ and $R > \eta$. Let \(T\ge1\), \(d\ge T\), and let
\(\bm{U}=(\bm{u}_1,\ldots,\bm{u}_T)\) be Haar-uniform on
\(\operatorname{St}(d,T)\).  Consider an arbitrary randomized
adaptive procedure with random seed \(\omega_{\rm alg}\) that makes
\(N\) calls.  Let \(\mathcal T_{t-1}\in\mathsf T_{t-1}\) denote its
actual transcript before call \(t\), and define its pre-query
information by
\[
  \mathscr F_{t-1}
  =
  \sigma\bigl(
    \omega_{\rm alg},\mathcal T_{t-1}
  \bigr).
\]
Let \(\mathsf D\) be a standard Borel space
containing the public query data used by the oracle.  Before call
\(t\), the procedure chooses a query descriptor \(\bm{a}_t\in\mathsf D\)
and an associated vector \(\bm{r}_t\in\mathbb{R}^d\), both measurable
with respect to \(\mathscr F_{t-1}\), such that
\begin{equation}
  \left\lVert \bm{r}_t\right\rVert \le \frac{R}{\eta}.
  \label{eq:haar-probe-bound}
\end{equation}
Let \(p\in(0,1]\), and let $\xi_1,\ldots,\xi_N$ be i.i.d.\
$\operatorname{Ber}(p)$ variables such that
$\bm U$, $\omega_{\rm alg}$, and $(\xi_1,\ldots,\xi_N)$ are mutually
independent.  At call $t$, the oracle uses $\xi_t$.  In particular,
$\xi_t$ is independent of the actual pre-query sigma-field
$\mathscr F_{t-1}$.
Let \(\bm{Y}_t\) denote the oracle response, taking values in a standard
Borel response space \(\mathsf Y\).  We impose the following
response-dependence condition.  For every
\(t\in\{1,\ldots,N\}\), integer
\(\ell\in\{0,\ldots,T\}\), and bit \(b\in\{0,1\}\), there is a
Borel map
\begin{equation}
  M_{t,\ell}^{(b)}
  :
  \mathsf T_{t-1}\times\mathsf D\times\mathbb{R}^d
  \times\operatorname{St}(d,\ell_b)
  \longrightarrow
  \mathsf Y,
  \qquad
  \ell_b=\min\{T,\ell+b\},
  \label{eq:formal-prefix-map}
\end{equation}
where \(\mathsf T_{t-1}\) is the transcript space before call \(t\),
such that, on the event
\begin{equation}
  \left\{
    \xi_t=b,\quad
    \operatorname{prog}_{1/4}(\bm{U}^\top \bm{r}_t)\le\ell
  \right\},
  \label{eq:formal-prefix-event}
\end{equation}
the actual response satisfies
\begin{equation}
  \bm{Y}_t
  =
  M_{t,\ell}^{(b)}
  \bigl(
    \mathcal{T}_{t-1},\bm{a}_t,\bm{r}_t,
    \bm{u}_1,\ldots,\bm{u}_{\ell_b}
  \bigr).
  \label{eq:formal-prefix-identity}
\end{equation}
Thus a failed call has a representation using only the first
\(\ell\) columns, while a successful call has a representation using
only the first \(\min\{T,\ell+1\}\) columns.  The requirement is the
existence of the uniform Borel maps in
\eqref{eq:formal-prefix-map}; a pointwise statement that happens to
mention no later column is not sufficient.
After the \(N\) calls, append an additional vector \(\bm{r}_{N+1}\)
determined by the procedure's output and satisfying
\(\left\lVert \bm{r}_{N+1}\right\rVert \le\frac{R}{\eta}\), but return
no oracle response to this vector.  Put \(n=N+1\).  If
\(\delta\in(0,1)\) and 
\begin{equation}
  d
  \ge
  T+
  C_{\rm H}(\frac{R}{\eta})^2 n
  \log\left(\frac{2n^2T}{\delta}\right),
  \label{eq:haar-dimension-general}
\end{equation}
then, with probability at least \(1-\delta\), simultaneously,
\begin{align}
  &\operatorname{prog}_{1/4}(\bm{U}^\top \bm{r}_t)
  \le L_{t-1},
  \quad 1\le t\le N,
  \label{eq:haar-query-conclusion}\\
  &\operatorname{prog}_{1/4}(\bm{U}^\top \bm{r}_{N+1})
  \le L_N.
  \label{eq:haar-output-conclusion}
\end{align}
The probability is joint over \(\bm{U}\), the procedure's private
randomness, and the fresh Bernoulli variables.
\end{lemma}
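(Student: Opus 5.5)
The plan is a coupled induction over calls $t=1,\dots,n$ with $n=N+1$. Define the good event $G_t=\{\operatorname{prog}_{1/4}(\bm U^\top\bm r_s)\le L_{s-1}\ \text{for all } s\le t\}$, where $L_N$ is used for $s=N+1$. The goal is to bound $\mathbb P(G_{t-1}\setminus G_t)\le \delta/n$ and then sum over $t$.

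The first step is a surrogate transcript. Construct a transcript $\widetilde{\mathcal T}_t$ recursively: the queries come from the algorithm's maps applied to $(\omega_{\rm alg},\widetilde{\mathcal T}_{t-1})$, and the responses are
\[
\widetilde{\bm Y}_t=M^{(\xi_t)}_{t,L_{t-1}}\bigl(\widetilde{\mathcal T}_{t-1},\widetilde{\bm a}_t,\widetilde{\bm r}_t,\bm u_1,\dots,\bm u_{\min\{T,L_{t-1}+\xi_t\}}\bigr).
\]
Since $L_t=\min\{T,L_{t-1}+\xi_t\}$, the surrogate transcript up to time $t$ is a Borel function of $Z_t=(\omega_{\rm alg},\xi_1,\dots,\xi_t)$ and $\bm U_{\le L_t}$ alone. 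By \eqref{eq:formal-prefix-identity} and induction, on $G_{t-1}$ the surrogate and actual transcripts coincide up to time $t-1$, so $\widetilde{\bm r}_t=\bm r_t$. It is therefore enough to control the surrogate probe $\widetilde{\bm r}_t$. This probe is measurable with respect to the stratified $\sigma$-field $\mathscr H_{J}$ with $J=L_{t-1}$ and $Z=Z_{t-1}$. One subtlety: the surrogate depends on $\bm U_{\le L_{t-1}}$ through random prefix lengths at earlier times, but $L_s\le L_{t-1}$ is monotone, so on $\{L_{t-1}=\ell\}$ everything lies in $\sigma(Z,\bm U_{\le\ell})$. This is exactly what Lemma~\ref{lem:stopped-prefix-haar} handles.

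The second step applies Lemma~\ref{lem:stopped-prefix-haar}. Conditionally on $\mathscr H_J$ and on $\{J=\ell<T\}$, each $\bm u_j$ with $j>\ell$ is uniform on the unit sphere of $\operatorname{span}(\bm U_{\le\ell})^\perp$, whose dimension is $d-\ell\ge d-T$. Write $\bm w$ for the projection of $\widetilde{\bm r}_t$ onto that complement; then $\langle\bm u_j,\widetilde{\bm r}_t\rangle=\langle\bm u_j,\bm w\rangle$ with $\|\bm w\|\le R/\eta$. The spherical-cap bound gives
\[
\mathbb P\bigl(|\langle\bm u_j,\widetilde{\bm r}_t\rangle|>1/4\mid\mathscr H_J\bigr)\le 2\exp\Bigl(-\frac{(d-T)}{32(R/\eta)^2}\Bigr).
\]
A union bound over $j\in\{\ell+1,\dots,T\}$ yields a conditional failure probability of at most $2T\exp(-c(d-T)\eta^2/R^2)$. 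Under \eqref{eq:haar-dimension-general}, with $C_{\rm H}$ large, this is at most $\delta/n^2\le\delta/n$. Taking expectations and summing over the $n$ steps (the last probe $\bm r_{N+1}$ is treated identically with $J=L_N$) gives failure probability at most $\delta$. The case $\ell=T$ is trivial, since progress is always at most $T$.

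The main obstacle is making the surrogate coupling rigorous. Measurability of $\widetilde{\mathcal T}_t$ in $(Z_t,\bm U_{\le L_t})$ has to hold uniformly, which is why the Borel maps $M$ are required. The independence structure must also be checked: $\xi_t$ is independent of $(\bm U,\omega_{\rm alg},\xi_{<t})$, so $Z$ is independent of $\bm U$ as Lemma~\ref{lem:stopped-prefix-haar} requires. Finally, the conclusion must transfer from the surrogate probes back to the actual probes on the good event. I would first verify, by induction on $t$, the identity $\mathcal T_{t-1}=\widetilde{\mathcal T}_{t-1}$ on $G_{t-1}$, and only then apply the cap estimate.
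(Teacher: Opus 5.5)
Your proposal is correct and follows the paper's overall plan. Both use a surrogate transcript driven by $M^{(\xi_t)}_{t,L_{t-1}}$, stratumwise measurability in $(\omega_{\rm alg},\xi_1,\dots,\xi_t,\bm U_{\le L_t})$ via monotonicity of $L_t$, Lemma~\ref{lem:stopped-prefix-haar} with $J=L_{t-1}$, a spherical-cap bound, and transfer to the actual process by the agreement induction. Your telescoping over $G_{t-1}\setminus G_t$ is only a rearrangement of the paper's global union bound over coupled failures.

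The genuine difference is the cap step.
\begin{itemize}
\item The paper orthonormalizes the residuals $(\bm I_d-\bm\Pi_{\mathcal U_t})\bm r_1,\dots,(\bm I_d-\bm\Pi_{\mathcal U_t})\bm r_t$ of all probes so far. It applies the cap bound to every basis vector at the smaller threshold $\tau=\eta/(4R\sqrt n)$, recovers $|\langle\bm u_j,\bm r_t\rangle|\le 1/4$ by Cauchy--Schwarz, and union-bounds over $n^2T$ events. This is what puts the factor $n$ in front of $(R/\eta)^2$ in \eqref{eq:haar-dimension-general}.
\item You apply the cap bound directly to the residual of the single probe $\widetilde{\bm r}_t$, at threshold $\eta/(4R)$. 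This is legitimate for the same reason the paper's step is: conditioning on $\mathscr H_J$ fixes $\widetilde{\bm r}_t$, and hence its residual, just as it fixes the paper's basis vectors. Once the coupled transcript is in place, the expansion over earlier probes is unnecessary.
\end{itemize}

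What your route buys is a shorter argument and a weaker dimension requirement: $d\ge T+C(R/\eta)^2\log(2nT/\delta)$ already suffices. In the application this means $d\gtrsim T\log(NT)$ rather than $d\gtrsim TN\log(N^2T)$. Since the stated hypothesis is stronger, taking $C_{\rm H}$ large yields your per-step bound $\delta/n^2$, and hence the lemma as written.
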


The proof uses the adaptive random-rotation argument of
\citet[Appendix~B.1.2, especially Lemmas~13--14]
{arjevani2023lower}.
Lemma~14 gives the required conditional rotational invariance, and the
proof of Lemma~13 combines it with spherical concentration.
Lemma~\ref{lem:stopped-prefix-haar} extends the conditional law from a
fixed number of initial columns to the random number used here.  It
remains to verify the required dependence of the complete bilevel
oracle response on the initial frame columns.

For the present hard instance, the objects in
Lemma~\ref{lem:haar-cited} are defined as follows.  At call \(t\), the
public query descriptor is
\[
  \bm{a}_t=(\bm{x}_t,\bm{z}_t,\bm v_t)
  \in
  \mathsf D
  \coloneqq
  \mathbb{R}^d\times\mathbb{R}^d\times\mathbb{R}^2,
\]
and the associated vector is
\[
  \bm{r}_t=\frac{\rho_R(\bm{z}_t)}{\eta}.
\]
The complete oracle response is
\[
  \bm{Y}_t
  =
  \left(
    \nabla\widehat f_{\bm{U}}^\kappa(\bm{x}_t,\bm{z}_t,\bm v_t),
    \nabla\widehat g_{\bm{U}}^\kappa
      (\bm{x}_t,\bm{z}_t,\bm v_t;\xi_t)
  \right)
  \in
  \mathsf Y
  \coloneqq
  \mathbb{R}^{2d+2}\times\mathbb{R}^{2d+2},
\]
where
\(\widehat f_{\bm{U}}^\kappa=f_{\bm{U}}^\kappa\).
The original transcript \(\mathcal T_{t-1}\) consists only of the
query-response pairs observed before call \(t\).  The algorithm's
complete pre-query information is
\[
  \mathscr F_{t-1}
  =
  \sigma\bigl(
    \omega_{\rm alg},\mathcal T_{t-1}
  \bigr),
\]
as in \eqref{eq:prequery-filtration}. 
Since $R=230\eta\sqrt T$, and $\|\rho_R(\bm z_t)\|\le R$, the associated vector satisfies
\[
  \|\bm r_t\|
  =
  \frac{\|\rho_R(\bm z_t)\|}{\eta}
  \le
  \frac{R}{\eta}
  =
  230\sqrt T,
\]
so the norm condition in Lemma~\ref{lem:haar-cited} is satisfied.
Moreover,
\[
  \operatorname{prog}_{1/4}(\bm{U}^\top \bm{r}_t)
  =
  \operatorname{prog}_{\eta/4}
  \bigl(\bm{U}^\top\rho_R(\bm{z}_t)\bigr).
\]
At call \(t\), the number of initial frame columns is
\(\ell=L_{t-1}\), and the bit \(b\) is the realized Bernoulli sample
\(\xi_t\).  Lemma~\ref{lem:bilevel-interface} below constructs the
maps \(M_{t,\ell}^{(0)}\) and \(M_{t,\ell}^{(1)}\): the map for
$\xi_t=0$ uses only \(\bm{u}_1,\ldots,\bm{u}_\ell\), whereas the map
for $\xi_t=1$ uses only
\(\bm{u}_1,\ldots,\bm{u}_{\min\{T,\ell+1\}}\).

Finally, if \(\widehat{\bm{x}}_N^{\mathsf A}\) is the algorithm's output, the
additional output vector is
\[
  \bm{r}_{N+1}
  =
  \frac{\rho_R(\kappa\widehat{\bm{x}}_N^{\mathsf A})}{\eta}.
\]
The factor \(\kappa\widehat{\bm{x}}_N^{\mathsf A}\) appears because the lower-level
solution satisfies
\(\bm{z}^*(\widehat{\bm{x}}_N^{\mathsf A})=\kappa\widehat{\bm{x}}_N^{\mathsf A}\), and the hyper-objective evaluates the chain at this lower solution.

\subsubsection{\texorpdfstring{Probability--$p$}{Probability-p} Zero-chain Property}

In the following lemma, we verify that the complete joint oracle response satisfies the
response-dependence condition of Lemma~\ref{lem:haar-cited}, including
at the threshold points of the soft indicators. Its proof is given in
Appendix~\ref{app:interface-proof}.

\begin{lemma}
\label{lem:bilevel-interface}
Fix a query \((\bm{x},\bm{z},\bm v)\), and write
\begin{equation}
  \bm{q}=\bm{q}_{\bm{U}}(\bm{z})=\bm{U}^\top\rho_R(\bm{z}).
  \label{eq:q-at-query}
\end{equation}
For every fixed \(\ell\in\{0,\ldots,T\}\), there are
Borel maps \(M_\ell^{(0)}\) and \(M_\ell^{(1)}\), depending only on
the public query and respectively on
\[
  \bm{u}_1,\ldots,\bm{u}_\ell
  \quad\text{and}\quad
  \bm{u}_1,\ldots,\bm{u}_{\min\{T,\ell+1\}},
\]
such that, whenever
\begin{equation}
  \operatorname{prog}_{\eta/4}(\bm{q})\le\ell,
  \label{eq:bilevel-prefix-premise}
\end{equation}
the joint upper-lower oracle response with sample bit
\(\xi=b\) is exactly \(M_\ell^{(b)}\).  In particular, these maps do
not take the random progress index
\(\operatorname{prog}_{\eta/4}(\bm{q})\) as an argument.  They verify the
response-dependence condition
\eqref{eq:formal-prefix-map}--\eqref{eq:formal-prefix-identity}.
\end{lemma}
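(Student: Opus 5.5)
The plan is to write out the full joint response explicitly and show that each of its pieces depends on $\bm U$ only through the first $k$ or $k+1$ columns. The pieces are $\nabla f_{\bm U}^\kappa$ and $\nabla \widehat g_{\bm U}^\kappa(\cdot;\xi)$. We use $k\coloneqq\operatorname{prog}_{\eta/4}(\bm q)\le\ell$ and the cylinder identities of Lemma~\ref{lem:extractor}. Write $\bm J(\bm z)=D\rho_R(\bm z)$, which is public because it depends only on $\bm z$. By the chain rule,
\[
\nabla_z a_{\bm U}(\bm z)=\bm J(\bm z)^\top\bm U\,\nabla A_{\eta,T}(\bm q),\qquad
\nabla_z \theta_{\bm U}(\bm z)=\bm J(\bm z)^\top\bm U\,\nabla b_{\eta,T}(\bm q).
\]
The response is then assembled from public quantities together with $a_{\bm U},\theta_{\bm U}$ and these two gradients:
\[
\nabla f=\bigl(\bm 0,\ \nabla_z a_{\bm U}+\nabla h_R,\ (0,1)\bigr),
\]
\[
\nabla_z\widehat g=\tfrac1\kappa\bm z-\bm x+\tfrac{\xi}{p}\bigl(d_\kappa\vartheta-\psi_\eta(v_1)\bigr)\tfrac{\nabla_z\theta_{\bm U}}{s_\kappa},
\]
\[
\nabla_v\widehat g=\bm M_\kappa\bm v-\tfrac{\xi}{p}\vartheta\,\psi_\eta'(v_1)\bm e_1,\qquad \nabla_x\widehat g=-\bm z .
\]

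The key step is to replace $\bm q$ by a truncated coordinate vector that is computable from a prefix of columns. Since $k\le\ell$, coordinates $k+1,\dots,\ell$ of $\bm q$ have magnitude at most $\eta/4$. Define $\bm q^{(\ell)}\coloneqq\bm P_\ell\bm q=(\langle\bm u_i,\rho_R(\bm z)\rangle)_{i\le\ell}$, which is a Borel function of $\bm U_{\le\ell}$ and $\bm z$. Its progress at threshold $\eta/4$ is still $k$. Apply \eqref{eq:extractor-cylinder-values}--\eqref{eq:extractor-cylinder-A} at both $\bm q$ and $\bm P_\ell\bm q$; since $\bm P_k\bm P_\ell=\bm P_k$, this gives $A_{\eta,T}(\bm q)=A_{\eta,T}(\bm P_\ell\bm q)$ and $\nabla A_{\eta,T}(\bm q)=\bm P_k\nabla A_{\eta,T}(\bm P_\ell\bm q)$. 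Since $k\le\ell$ the last expression also equals $\bm P_\ell\nabla A_{\eta,T}(\bm P_\ell\bm q)$, and $\bm U\bm P_\ell\bm w=\bm U_{\le\ell}\bm w_{\le\ell}$. Hence $\nabla_z a_{\bm U}$ is a Borel function of $(\bm z,\bm U_{\le\ell})$. This gives $\nabla f$ for both bits, and $\nabla\widehat g$ for $b=0$, because the $\mathcal R$ terms are then multiplied by zero. So $M_\ell^{(0)}$ is defined by evaluating this formula with the truncated vector, for every input, so that it is globally Borel. It does not depend on $k$, and it agrees with the actual response on the premise.

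For $b=1$ we use $\ell'=\min\{T,\ell+1\}$ and $\bm P_{\ell'}\bm q$, which is computable from $\bm U_{\le\ell'}$. By \eqref{eq:extractor-cylinder-values} and \eqref{eq:extractor-cylinder-b} applied at both points, $b_{\eta,T}(\bm q)=b_{\eta,T}(\bm P_{\ell'}\bm q)$, because $\bm P_{k+1}\bm P_{\ell'}=\bm P_{k+1}$ when $k+1\le\ell'$. Likewise $\nabla b_{\eta,T}(\bm q)=\bm P_{\ell'}\nabla b_{\eta,T}(\bm P_{\ell'}\bm q)$. This makes $\theta_{\bm U}$, $\vartheta$ and $\nabla_z\theta_{\bm U}$ functions of $(\bm z,\bm U_{\le\ell'})$, and hence so is the whole response. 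The case $\ell=T$ is trivial because $\bm P_T=\bm I$.

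The main obstacle is checking that the cylinder identities really apply at the truncated point. This needs $\operatorname{prog}_{\eta/4}(\bm P_\ell\bm q)=k$ exactly, including at threshold values, so that Lemma~\ref{lem:extractor} may be invoked at both $\bm q$ and $\bm P_\ell\bm q$ with the same $k$. Truncation only zeroes coordinates with index greater than $\ell\ge k$, all of which have magnitude at most $\eta/4$ by definition of $k$, so the largest index exceeding $\eta/4$ is unchanged. We then invoke the lemma's remark that the identities hold at the thresholds. Borel measurability of the maps follows because they are compositions of continuous maps.
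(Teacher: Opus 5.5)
Your proposal is correct and follows the paper's proof essentially step for step. You truncate $\bm q$ to $\bm P_\ell\bm q$ (resp.\ $\bm P_{\min\{T,\ell+1\}}\bm q$), check that the $\eta/4$-progress index is unchanged so the cylinder identities of Lemma~\ref{lem:extractor} apply at both points, and conclude that the upper gradient and the failed lower gradient use only $\bm u_1,\ldots,\bm u_\ell$, while $\vartheta_{\kappa,\bm U}$ and its gradient use at most one additional column. The only differences from the paper are cosmetic: it writes $D\rho_R(\bm z)^\top\bm u_j$ out explicitly and states that the formal maps $M_{t,\ell}^{(b)}$ ignore the transcript and vector arguments.
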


\subsubsection{Lower Bound on Oracle Calls}

Combining Lemmas~\ref{lem:haar-cited} and Lemma~\ref{lem:bilevel-interface}
with the Bernoulli success-count estimate~\eqref{eq:conditional-counting}
gives the following progress estimate. Its proof is given in
Appendix~\ref{app:progress-proof}.

\begin{proposition}
\label{prop:T-over-p}
Fix $
  \mathsf A
  \in
  \mathcal A_N^{\mathcal{SFO}}(d,d+2)$,
and let \(\widehat{\bm{x}}_N^{\mathsf A}\) be its output after \(N\) calls
to the oracle \eqref{eq:sample-hard-instance}.  Set $S_N=\sum_{t=1}^N\xi_t$.
If
\begin{equation}
  N\le\frac{T}{8p}
  \label{eq:N-progress-condition}
\end{equation}
and
\begin{equation}
  d
  \ge
  T+
  C_{\rm H}(230^2T)(N+1)\log(16(N+1)^2T),
  \label{eq:dimension-choice}
\end{equation}
then, jointly over \(\bm{U}\), the algorithm's private randomness, and the
fresh oracle samples,
\begin{equation}
  \mathbb{P}\left(
    \operatorname{prog}_{\eta/4}\!\left(
      \bm{U}^\top\rho_R(\kappa\widehat{\bm{x}}_N^{\mathsf A})
    \right)<T
  \right)
  \ge\frac34.
  \label{eq:unfinished-probability}
\end{equation}
\end{proposition}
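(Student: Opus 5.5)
The proof is an application of Lemma~\ref{lem:haar-cited} with $\delta=1/8$, whose hypotheses are supplied by Lemma~\ref{lem:bilevel-interface}. That output-progress bound is then combined with the Markov estimate \eqref{eq:conditional-counting} through a union bound.

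\textbf{Setting up the instantiation.} I will set the descriptor to $\bm a_t=(\bm x_t,\bm z_t,\bm v_t)$, the vector to $\bm r_t=\rho_R(\bm z_t)/\eta$, and the final vector to $\bm r_{N+1}=\rho_R(\kappa\widehat{\bm x}_N^{\mathsf A})/\eta$. All of these are measurable with respect to $\mathscr F_{t-1}$, respectively with respect to $\sigma(\omega_{\rm alg},\mathcal T_N)$, because the query and output maps are Borel. They have norm at most $R/\eta=230\sqrt T$ since $\lVert\rho_R\rVert\le R$. The oracle draws $\xi_t\sim\operatorname{Ber}(p)$ i.i.d.\ and independently of $\bm U$ and $\omega_{\rm alg}$, which is exactly the independence structure the lemma requires.

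\textbf{Response-dependence condition.} Lemma~\ref{lem:bilevel-interface} gives, for each fixed $\ell$ and $b$, Borel maps $M^{(b)}_\ell$ of the public query and the first $\ell_b$ columns. On the event $\{\xi_t=b,\ \operatorname{prog}_{\eta/4}(\bm U^\top\rho_R(\bm z_t))\le\ell\}$, these maps equal the full response $(\nabla f^\kappa_{\bm U},\nabla\widehat g^\kappa_{\bm U})$. I define $M^{(b)}_{t,\ell}$ by ignoring the transcript argument, and note the identity $\operatorname{prog}_{1/4}(\bm U^\top\bm r_t)=\operatorname{prog}_{\eta/4}(\bm U^\top\rho_R(\bm z_t))$. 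With this, \eqref{eq:formal-prefix-map}--\eqref{eq:formal-prefix-identity} hold.

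\textbf{Dimension condition.} With $n=N+1$ and $\delta=1/8$, the requirement \eqref{eq:haar-dimension-general} reads
\[
d\ge T+C_{\rm H}(230^2T)(N+1)\log\bigl(16(N+1)^2T\bigr),
\]
which is exactly \eqref{eq:dimension-choice}.

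\textbf{Output progress and counting.} Lemma~\ref{lem:haar-cited} now gives, with probability at least $7/8$ jointly over $\bm U$, $\omega_{\rm alg}$ and $(\xi_t)$, the bound \eqref{eq:haar-output-conclusion}:
\[
\operatorname{prog}_{\eta/4}\bigl(\bm U^\top\rho_R(\kappa\widehat{\bm x}^{\mathsf A}_N)\bigr)\le L_N=\min\{T,S_N\}.
\]
For the count, $\mathbb E S_N=Np$ because the $\xi_t$ are i.i.d.\ $\operatorname{Ber}(p)$, whatever the algorithm does. So if $N\le T/(8p)$, Markov's inequality gives $\mathbb P(S_N\ge T)\le 1/8$.

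On the intersection of the two good events we have $\operatorname{prog}\le S_N<T$. A union bound then gives probability at least $1-1/8-1/8=3/4$, which is \eqref{eq:unfinished-probability}.

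\textbf{Main obstacle.} The delicate step is checking that Lemma~\ref{lem:haar-cited} applies to the output vector. This needs $\bm r_{N+1}$ to be determined by $(\omega_{\rm alg},\mathcal T_N)$, with no dependence on the oracle beyond the transcript. It also needs the probability statement to be joint rather than conditional on $\bm U$. Both hold from the definitions, so the remaining work is bookkeeping; the substantive content lives in the two cited lemmas.
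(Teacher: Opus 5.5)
Your proposal is correct and follows the paper's proof essentially step for step. Like the paper, you instantiate Lemma~\ref{lem:haar-cited} with $\bm r_t=\rho_R(\bm z_t)/\eta$, $\bm r_{N+1}=\rho_R(\kappa\widehat{\bm x}_N^{\mathsf A})/\eta$, $\delta=1/8$ and the response maps of Lemma~\ref{lem:bilevel-interface}, then intersect with the Markov event $\{S_N<T\}$ via a union bound; the paper also explicitly records the side conditions $R=230\eta\sqrt T>\eta$ (from $T\ge1$), $0<p\le1$ and $d\ge T$, which you leave implicit but are immediate.
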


\begin{remark}
The dimension condition \eqref{eq:dimension-choice} ensures, through
spherical concentration, that an adaptive query has a small projection
onto each remaining frame column with high probability.  It does not
add a factor to the oracle lower bound.  After the call budget $N$ is
fixed, the proof chooses a sufficiently large finite dimension $d$.
The conclusion then holds for every
$\mathsf A\in\mathcal A_N^{\mathcal{SFO}}(d,d+2)$, although the final
deterministic frame and hence the hard instance may depend on
$\mathsf A$.  The oracle complexity is determined by $T/p$; the
dimension condition is used only in the random-rotation argument.
\end{remark}
Proposition~\ref{prop:T-over-p} implies that, for
$N\le T/(8p)$, the rotated chain coordinates at the output have
progress below $T$ with probability at least $3/4$.
Lemma~\ref{lem:gap-gradient} then gives a hypergradient
above the target accuracy on this event.  Appendix~\ref{sec:assembly}
combines these estimates and proves Theorem~\ref{thm:main}.

\section{Conclusion and Future Work}
\label{sec:conclusion}

We established the stochastic first-order lower bound
\[
  \Omega\!\left(
    \frac{\Delta\kappa_y^2}{\epsilon^2}
    \max\left\{
      1,
      \frac{\sigma^2\kappa_y^6}{\epsilon^4}
    \right\}
  \right)
\]
for smooth NC-SC bilevel optimization over arbitrary adaptive randomized first-order algorithms. In the noise-dominated regime, this becomes
\(\Omega(\Delta\sigma^2\kappa_y^8\epsilon^{-6})\), showing that the
\(\epsilon^{-6}\) dependence of the best-known first-order upper
bounds \citep{chen2025near, gu2026sgha} is information-theoretically unavoidable under the bounded-variance
\(\mathcal{SFO}\) model. The optimal dependence on the condition number
remains open, however, in view of the upper bound
$
  \tilde{\mathcal O}
  \!\left(
    \bar\kappa_y^{11}\epsilon^{-6}
  \right)
$
of \citet{chen2025near}. Another interesting direction is to establish
lower bounds in a fixed ambient dimension, rather than allowing the
dimension to grow with the target accuracy.


\section*{AI Disclosure}


The hard-instance construction and its presentation were refined through iterative collaboration with OpenAI's GPT-5.6 Sol. The authors supplied the model with an
independently developed proof draft targeting an $\epsilon^{-6}$ lower bound, motivated by their unpublished upper-bound result showing that $\epsilon^{-4}$ complexity bound is attainable when the lower-level objective is quadratic or satisfies an averaged stochastic smoothness condition, but not necessarily for the general problem class. The authors independently
checked all mathematical arguments and take full responsibility for the
content of the paper. An accompanying Lean formalization, developed with assistance from
Codex and available at
\url{https://github.com/Wu-Qilong/Lower-Bounds-for-Nonconvex-Strongly-Convex-Bilevel-Optimization},
provides machine-checked verification of the lower-bound argument.

\bibliography{main}

\clearpage
\appendix

\section{Proof of Lemma~\ref{lem:extractor}}
\label{app:extractor-proof}

\begin{proof}
We divide the proof into four steps.

\emph{Step 1: the soft indicators are smooth and have dimension-free
derivative bounds.}
The absolute value in $\upsilon(t)=\Gamma(\left\lvert t\right\rvert )$ causes no
nonsmoothness at zero because $\Gamma$ is identically zero on a
neighborhood of zero.  Hence $\upsilon\in C^\infty(\mathbb{R})$.  Define
\begin{equation}
  K_\upsilon
  =\max_{0\le r\le3}\left\lVert \upsilon^{(r)}\right\rVert _\infty.
  \label{eq:K-upsilon}
\end{equation}
This is a finite numerical constant.
For $1\le r\le3$, directions $\bm{u}_1,\ldots,\bm{u}_r$, and a coordinate
$j\ge i$,
\[
  \bigl[D^rV_i^\eta(\bm{z})[\bm{u}_1,\ldots,\bm{u}_r]\bigr]_j
  =\eta^{-r}\upsilon^{(r)}(z_j/\eta)
    \prod_{\ell=1}^r(\bm{u}_\ell)_j.
\]
Consequently,
\begin{align}
  \left\lVert D^rV_i^\eta(\bm{z})[\bm{u}_1,\ldots,\bm{u}_r]\right\rVert ^2
  &\le K_\upsilon^2\eta^{-2r}
    \sum_{j=i}^T\prod_{\ell=1}^r\left\lvert (\bm{u}_\ell)_j\right\rvert ^2
  \notag\\
  &\le K_\upsilon^2\eta^{-2r}
    \prod_{\ell=1}^r\sum_{j=i}^T\left\lvert (\bm{u}_\ell)_j\right\rvert ^2
  \notag\\
  &\le K_\upsilon^2\eta^{-2r}
    \prod_{\ell=1}^r\left\lVert \bm{u}_\ell\right\rVert ^2.
  \label{eq:Vi-derivative-detail}
\end{align}

For each Euclidean dimension $n$, define
$\chi_n(\bm{\nu})=\Gamma(1-\left\lVert \bm{\nu}\right\rVert )$ on $\mathbb{R}^n$.  If
$\left\lVert \bm{\nu}\right\rVert \le1/2$, then
$1-\left\lVert \bm{\nu}\right\rVert \ge1/2$ and $\chi_n(\bm{\nu})=1$.  If $\left\lVert \bm{\nu}\right\rVert \ge3/4$, then
$1-\left\lVert \bm{\nu}\right\rVert \le1/4$ and $\chi_n(\bm{\nu})=0$.  On the fixed annulus
$1/2<\left\lVert \bm{\nu}\right\rVert <3/4$, the norm is smooth and its first three derivatives
have dimension-free operator bounds.  Near the origin, where the norm
itself is not differentiable, $\chi_n$ is constant.  Thus every
$\chi_n$ is globally $C^\infty$.  For completeness, on this annulus
put $q=\left\lVert \bm{\nu}\right\rVert $ and $\bm{e}=\bm{\nu}/q$.  Then
\[
  Dq[\bm{h}]=\left\langle \bm{e},\bm{h}\right\rangle ,
  \qquad
  D^2q=\frac{\bm{I}-\bm{e}\bm{e}^\top}{q}.
\]
Hence $\left\lVert Dq\right\rVert \le1$ and $\left\lVert D^2q\right\rVert _{\mathrm{op}}\le2$.  Moreover
$D\bm{e}[\bm{h}]=(\bm{I}-\bm{e}\bm{e}^\top)\bm{h}/q$, so $\left\lVert D\bm{e}\right\rVert _{\mathrm{op}}\le2$; differentiating
$D^2q=(\bm{I}-\bm{e}\bm{e}^\top)/q$ once more gives
$\left\lVert D^3q\right\rVert _{\mathrm{op}}\le12$.  Applying the chain rule to the fixed
one-dimensional function $t\mapsto\Gamma(1-t)$ now proves that the
first three derivatives of $\chi_n$ are bounded independently of the
dimension.  Therefore the following constant is finite:
\begin{equation}
  K_\chi
  =\max\left\{1,
    \max_{1\le r\le3}\sup_{n\ge1}\sup_{\bm{\nu}\in\mathbb{R}^n}
       \left\lVert D^r\chi_n(\bm{\nu})\right\rVert _{\mathrm{op}}
  \right\}.
  \label{eq:K-chi}
\end{equation}
Define
\begin{equation}
  C_{\rm gate}=K_\chi(1+K_\upsilon)^3.
  \label{eq:C-gate}
\end{equation}
To see explicitly why this one constant covers all derivative orders,
write $h_i^\eta=\chi_{T-i+1}\circ \bm{V}_i^\eta$.  For unit directions,
the chain rule gives
\begin{align*}
  Dh_i^\eta
    &=D\chi[DV_i^\eta],\\
  D^2h_i^\eta
    &=D^2\chi[DV_i^\eta,DV_i^\eta]
      +D\chi[D^2\bm{V}_i^\eta],\\
  D^3h_i^\eta
    &=D^3\chi[DV_i^\eta,DV_i^\eta,DV_i^\eta]
      +3D^2\chi[D^2\bm{V}_i^\eta,DV_i^\eta]
      +D\chi[D^3\bm{V}_i^\eta].
\end{align*}
Equation~\eqref{eq:Vi-derivative-detail} implies
$\left\lVert D^rV_i^\eta\right\rVert _{\mathrm{op}}\le K_\upsilon\eta^{-r}$.
Consequently the respective coefficients are bounded by
$K_\chi K_\upsilon$,
$K_\chi(K_\upsilon^2+K_\upsilon)$, and
$K_\chi(K_\upsilon^3+3K_\upsilon^2+K_\upsilon)$, each of which is
at most $C_{\rm gate}$.  Together with $0\le h_i^\eta\le1$, this gives
\begin{equation}
  \left\lVert D^rh_i^\eta(\bm{z})\right\rVert _{\mathrm{op}}\le C_{\rm gate}\eta^{-r},
  \qquad 0\le r\le3.
  \label{eq:hi-derivative-detail}
\end{equation}

\emph{Step 2: at most one summand has a nonzero value or a nonzero
derivative of order at most three.}
Assume first that $m<T$.  If $i\le m$, the vector
$\bm{V}_i^\eta(\bm{z})$ contains its $m$--th coordinate.  Because
$\left\lvert z_m\right\rvert >\eta/2$, this coordinate equals one.  Hence
$\left\lVert \bm{V}_i^\eta(\bm{z})\right\rVert \ge1$ and $h_i^\eta$ is identically zero in a
neighborhood of $\bm{z}$.  Therefore the value and all derivatives of
$h_i^\eta\mathcal{L}_i^\eta$ vanish at $\bm{z}$.

If $i\ge m+2$, then $i-1\ge m+1$.  The definition of $m$ implies
$\left\lvert z_{i-1}\right\rvert \le\eta/2$.  The flatness statement in
Lemma~\ref{lem:standard-chain} then shows that the value and all
derivatives of $\mathcal{L}_i^\eta$ vanish at $\bm{z}$, so the same is true
of the product.  The only index not covered by these two cases is
$i=m+1$.  If $m=T$, the first case applies to every $i\le T$ and all
summands have values and derivatives that vanish at $\bm{z}$.

\emph{Step 3: derivative bounds for $b_{\eta,T}$.}
The derivatives of the fixed two-variable function $Q$ through order
three are bounded.  Define
\begin{equation}
  C_{\rm link}
  =\max_{0\le r\le3}
    \left\{2^{r/2}\sup_{(a,b)\in\mathbb{R}^2}
       \left\lVert D^rQ(a,b)\right\rVert _{\mathrm{op}}\right\},
  \qquad
  C_{\rm ext}=8C_{\rm gate}C_{\rm link}.
  \label{eq:extractor-constant}
\end{equation}
The linear map
$\bm{z}\mapsto(z_{i-1}/\eta,z_i/\eta)$ has operator norm at most
$\sqrt2/\eta$; hence
\begin{equation}
  \left\lVert D^r\mathcal{L}_i^\eta(\bm{z})\right\rVert _{\mathrm{op}}
  \le C_{\rm link}\eta^{-r},
  \qquad 0\le r\le3.
  \label{eq:link-derivative-fixed}
\end{equation}
At the fixed point $\bm{z}$, Step~2 leaves at most one nonzero summand.
If $m=T$, the value and derivatives through order three of every
summand vanish at $\bm{z}$, and the desired bound is
immediate.  Hence it remains only to consider $m<T$.
Thus the product rule and \eqref{eq:hi-derivative-detail} yield
\begin{align*}
  \left\lVert D^rb_{\eta,T}(\bm{z})\right\rVert _{\mathrm{op}}
  &\le \eta^2\sum_{s=0}^r \binom{r}{s}
    \left\lVert D^sh_{m+1}^\eta(\bm{z})\right\rVert _{\mathrm{op}}
    \left\lVert D^{r-s}\mathcal{L}_{m+1}^\eta(\bm{z})\right\rVert _{\mathrm{op}}\\
  &\le C_{\rm gate}C_{\rm link}\eta^{2-r}
     \sum_{s=0}^r\binom rs\\
  &=2^rC_{\rm gate}C_{\rm link}\eta^{2-r}\\
  &\le C_{\rm ext}\eta^{2-r},
\end{align*}
where $2^r\le8$ for $r\le3$.  

\emph{Step 4: exact derivative-truncation identities.}
Recall that \(m=\operatorname{prog}_{\eta/2}(\bm{z})\) and
\(k=\operatorname{prog}_{\eta/4}(\bm{z})\). Since
\(\eta/2>\eta/4\), we necessarily have \(m\leq k\).

We first assume that \(k<T\). By the definition of \(k\), every
coordinate \(j>k\) satisfies \(|z_j|\leq\eta/4\), and therefore
\[
\upsilon(z_j/\eta)=0,
\qquad j>k.
\]
The derivatives of \(\upsilon(z_j/\eta)\) through order three also
vanish. This remains true when \(|z_j|=\eta/4\), because \(\Gamma\)
is flat at \(1/4\).

Suppose first that \(m=k<T\). If \(k\geq1\), then
\(|z_k|>\eta/2\), so \(\upsilon(z_k/\eta)=1\). Hence coordinate
\(k\) makes every soft indicator \(h_i^\eta\) with \(i\leq k\)
identically zero in a neighborhood of \(\bm z\). When \(k=0\), there
are no such soft indicators. Moreover, the
value and all derivatives of every summand with \(i\geq k+2\) vanish,
since
\[
|z_{i-1}|
\leq \frac{\eta}{4}
\leq \frac{\eta}{2}.
\]
Finally, \(\bm{V}_{k+1}^\eta(\bm{z})=0\), so
\(h_{k+1}^\eta(\bm{z})=\Gamma(1)=1\). It follows that, at \(\bm{z}\) and
throughout the set obtained by fixing the first $k$ coordinates and
varying the coordinates $j>k$ subject to $|z_j|\le\eta/4$,
\begin{equation}
\label{eq:frontier-case-m-equals-k}
b_{\eta,T}(\bm{z})
=
\eta^2\mathcal L_{k+1}^\eta(\bm{z}),
\qquad
A_{\eta,T}(\bm{z})
=
\eta^2\sum_{i=1}^{k}\mathcal L_i^\eta(\bm{z}).
\end{equation}
The sum is interpreted as zero when \(k=0\). The first expression
in \eqref{eq:frontier-case-m-equals-k} depends only on coordinates
through \(k+1\), whereas the second depends only on coordinates
through \(k\).

Now suppose that \(m<k\). The value and all derivatives of every
summand with \(i\geq m+2\) vanish because
\(|z_{i-1}|\leq\eta/2\). Hence
\begin{equation}
\label{eq:frontier-case-m-less-k}
H_{\eta,T}(\bm{z})
=
\eta^2\sum_{i=1}^{m+1}\mathcal L_i^\eta(\bm{z}),
\qquad
b_{\eta,T}(\bm{z})
=
\eta^2h_{m+1}^\eta(\bm{z})\mathcal L_{m+1}^\eta(\bm{z}).
\end{equation}
Since \(m+1\leq k\), the first expression in
\eqref{eq:frontier-case-m-less-k} depends only on the first \(k\)
coordinates. In the second expression, for every \(j>k\),
$\upsilon(z_j/\eta)$ and its derivatives through order three vanish.
Thus the soft indicator \(h_{m+1}^\eta\),
and hence the full product
\(h_{m+1}^\eta\mathcal L_{m+1}^\eta\), depends only on the first
\(k\) coordinates. This is stronger than the claimed dependence
through \(k+1\). Since \(A_{\eta,T}=H_{\eta,T}-b_{\eta,T}\), the
function \(A_{\eta,T}\) also depends only on the first \(k\)
coordinates.

We next verify the coordinate-truncation conclusion explicitly. Define
\[
\bm{z}_A(t)=\bm{P}_k\bm{z}+t(\bm{I}-\bm{P}_k)\bm{z},
\qquad
\bm{z}_b(t)=\bm{P}_{k+1}\bm{z}+t(\bm{I}-\bm{P}_{k+1})\bm{z},
\qquad t\in[0,1].
\]
These paths satisfy
\[
\bm{z}_A(\bm{0})=\bm{P}_k\bm{z},
\qquad \bm{z}_A(\bm{1})=\bm{z},
\qquad
\bm{z}_b(\bm{0})=\bm{P}_{k+1}\bm{z},
\qquad \bm{z}_b(\bm{1})=\bm{z}.
\]
For every \(t\in[0,1]\) and \(j>k\),
\[
|[\bm{z}_A(t)]_j|
=
t|z_j|
\leq\frac{\eta}{4}.
\]
Similarly, for every \(j>k+1\),
\[
|[\bm{z}_b(t)]_j|
=
t|z_j|
\leq\frac{\eta}{4}.
\]
The first \(k\) coordinates remain unchanged along both paths.
Moreover, \(|z_{k+1}|\leq\eta/4\). Consequently, for every
\(t\in[0,1]\),
\[
\operatorname{prog}_{\eta/4}(\bm{z}_A(t))
=
\operatorname{prog}_{\eta/4}(\bm{z}_b(t))
=
k,
\]
and
\[
\operatorname{prog}_{\eta/2}(\bm{z}_A(t))
=
\operatorname{prog}_{\eta/2}(\bm{z}_b(t))
=
m.
\]
Therefore the same case, either \(m=k\) or \(m<k\), and the same
explicit formulas
\eqref{eq:frontier-case-m-equals-k}--%
\eqref{eq:frontier-case-m-less-k}
remain valid along the corresponding paths.

Since the explicit formulas above contain no coordinates after \(k\)
for \(A_{\eta,T}\) and none after \(k+1\) for \(b_{\eta,T}\) throughout
the corresponding regions where the varied coordinates remain in
$[-\eta/4,\eta/4]$, we obtain
\begin{equation}
\label{eq:frontier-value-cylinder}
A_{\eta,T}(\bm{z})
=
A_{\eta,T}(\bm{P}_k\bm{z}),
\qquad
b_{\eta,T}(\bm{z})
=
b_{\eta,T}(\bm{P}_{k+1}\bm{z}).
\end{equation}
More generally, for every \(1\leq r\leq3\), the order-$r$ derivatives
satisfy
\begin{equation}
\label{eq:frontier-A-jet-cylinder}
D^rA_{\eta,T}(\bm{z})
=
D^rA_{\eta,T}(\bm{P}_k\bm{z})
\circ(\bm{P}_k,\ldots,\bm{P}_k),
\end{equation}
and
\begin{equation}
\label{eq:frontier-b-jet-cylinder}
D^rb_{\eta,T}(\bm{z})
=
D^rb_{\eta,T}(\bm{P}_{k+1}\bm{z})
\circ(\bm{P}_{k+1},\ldots,\bm{P}_{k+1}).
\end{equation}

If \(k=T\), the identities
\eqref{eq:frontier-value-cylinder}--%
\eqref{eq:frontier-b-jet-cylinder}
are automatic from the convention \(\bm{P}_s=\bm{I}\) for \(s\geq T\).

It remains to check the two threshold boundaries. At
\(|z_j|=\eta/4\), every derivative entering from the transition side
vanishes because \(\Gamma\) is flat at \(1/4\). At
\(|z_j|=\eta/2\), the function \(\upsilon(z_j/\eta)\) is flat at
one, while the value and all derivatives of the next summand vanish by
Lemma~\ref{lem:standard-chain}. Hence the value and derivative identities
\eqref{eq:frontier-value-cylinder}--%
\eqref{eq:frontier-b-jet-cylinder}
remain valid when a coordinate lies exactly at either threshold.
This proves all the stated identities.
\end{proof}

\section{Proofs for Section \ref{sec:analytic}}
\label{app:analytic-proofs}

\subsection{Proof of Lemma~\ref{lem:composition-scales}}
\label{app:composition-proof}

\begin{proof}
Write
\[
  s(\bm{z})=\sqrt{1+\left\lVert \bm{z}\right\rVert ^2/R^2},
  \qquad
  \alpha(\bm{z})=s(\bm{z})^{-1},
  \qquad
  \bm{r}=\rho_R(\bm{z})=\bm{z}/s(\bm{z}).
\]
Direct differentiation gives
\begin{equation}
  D\rho_R(\bm{z})
  = \alpha \bm{I} - \frac{\alpha^3}{R^2}\bm{z}\bm{z}^{\top}
  = \frac{\bm{I}}{s(\bm{z})}
    -\frac{\bm{r}\bm{r}^\top}{R^2s(\bm{z})}.
  \label{eq:rho-jacobian}
\end{equation}

The two distinct eigenvalues of this symmetric matrix are in $[0,1]$, so
$\left\lVert D\rho_R(\bm{z})\right\rVert _{\mathrm{op}}\le1$.
Indeed, if \(\bm{z}=0\), then \(\alpha(\bm{0})=1\), and
\[
  D\rho_R(\bm{0})=\bm{I}.
\]
Consequently,
\[
  \|D\rho_R(\bm{0})\|_{\mathrm{op}}=\|\bm{I}\|_{\mathrm{op}}=1.
\]
Suppose now that \(\bm{z}\neq0\), and define the unit vector $\bm{e}=\frac{\bm{z}}{\|\bm{z}\|}$. Every vector \(\bm{h}\in\mathbb{R}^d\) has the orthogonal decomposition
\[
  \bm{h}=\bm{h}_\parallel+\bm{h}_\perp,
  \qquad
  \bm{h}_\parallel=\langle \bm{e},\bm{h}\rangle \bm{e},
  \qquad
  \bm{h}_\perp=\bm{h}-\langle \bm{e},\bm{h}\rangle \bm{e},
\]
where
\[
  \bm{h}_\parallel\in\operatorname{span}\{\bm{z}\},
  \qquad
  \langle \bm{z},\bm{h}_\perp\rangle=0.
\]

We first examine the action of \(D\rho_R(\bm{z})\) on the orthogonal
subspace \(\bm{z}^\perp\).  Since
\(\langle \bm{z},\bm{h}_\perp\rangle=0\), 
\begin{align}
  D\rho_R(\bm{z})\bm{h}_\perp
  =
  \alpha(\bm{z})\bm{h}_\perp
  -
  \frac{\alpha(\bm{z})^3}{R^2}
  \bm{z} \bm{z}^\top \bm{h}_\perp
  =
  \alpha(\bm{z})\bm{h}_\perp.
  \label{eq:soft-projection-tangential}
\end{align}
Thus every vector orthogonal to \(\bm{z}\) is an eigenvector with
eigenvalue \(\alpha(\bm{z})\).

Next, consider the radial direction \(\bm{e}\).  Because \(\bm{z}=\|\bm{z}\|\bm{e}\),
we have
\[
  \bm{z}\bm{z}^\top \bm{e}
  =
  \bm{z}\langle \bm{z},\bm{e}\rangle
  =
  \|\bm{z}\|^2\bm{e}.
\]
Therefore,
\begin{align}
  D\rho_R(\bm{z})\bm{e}
  =
  \left(
    \alpha(\bm{z})
    -
    \frac{\alpha(\bm{z})^3\|\bm{z}\|^2}{R^2}
  \right)\bm{e}
  =
  \alpha(\bm{z})
  \left(
    1-\frac{\alpha(\bm{z})^2\|\bm{z}\|^2}{R^2}
  \right)\bm{e}.
  \label{eq:soft-projection-radial-first}
\end{align}
By the definition of \(\alpha(\bm{z})\),
\begin{align}
  1-\frac{\alpha(\bm{z})^2\|\bm{z}\|^2}{R^2}
  &=
  1-
  \frac{\|\bm{z}\|^2/R^2}
       {1+\|\bm{z}\|^2/R^2}
  \notag\\
  &=
  \frac{1}
       {1+\|\bm{z}\|^2/R^2}
  \notag\\
  &=
  \alpha(\bm{z})^2.
\end{align}
Substituting this identity into
\eqref{eq:soft-projection-radial-first} yields
\begin{equation}
  D\rho_R(\bm{z})\bm{e}
  =
  \alpha(\bm{z})^3\bm{e}.
  \label{eq:soft-projection-radial}
\end{equation}
Hence the eigenvalue in the radial direction is
\(\alpha(\bm{z})^3\), whereas the eigenvalue in every tangential direction
is \(\alpha(\bm{z})\).

Since \(D\rho_R(\bm{z})\) is symmetric, its operator norm is the largest
absolute value of its eigenvalues.  Moreover,
\[
  0<\alpha(\bm{z})
  =
  \frac{1}{\sqrt{1+\|\bm{z}\|^2/R^2}}
  \le1.
\]
Consequently,
\[
  0<\alpha(\bm{z})^3\le\alpha(\bm{z})\le1,
\]
and therefore
\begin{equation}
  \|D\rho_R(\bm{z})\|_{\mathrm{op}}
  =
  \max\{\alpha(\bm{z}),\alpha(\bm{z})^3\}
  =
  \alpha(\bm{z})
  \le1.
\end{equation}

We now verify the two higher-order
bounds explicitly.  For directions $\bm{h},\bm{k},\bm{\ell}$,
\begin{align*}
  &D\alpha[\bm{h}]
  =-\frac{\alpha^3}{R^2}\left\langle \bm{z},\bm{h}\right\rangle ,\\
  &D^2\alpha[\bm{h},\bm{k}]
  =\frac{3\alpha^5}{R^4}\left\langle \bm{z},\bm{h}\right\rangle \left\langle \bm{z},\bm{k}\right\rangle 
    -\frac{\alpha^3}{R^2}\left\langle \bm{h},\bm{k}\right\rangle ,\\
  &D^3\alpha[\bm{h},\bm{k},\bm{\ell}]
  =-\frac{15\alpha^7}{R^6}
      \left\langle \bm{z},\bm{h}\right\rangle
      \left\langle \bm{z},\bm{k}\right\rangle
      \left\langle \bm{z},\bm{\ell}\right\rangle
      \\
  &\hspace{2.5cm}
    +\frac{3\alpha^5}{R^4}
      \Bigl(
        \left\langle \bm{h},\bm{k}\right\rangle
        \left\langle \bm{z},\bm{\ell}\right\rangle
        +\left\langle \bm{h},\bm{\ell}\right\rangle
        \left\langle \bm{z},\bm{k}\right\rangle
        +\left\langle \bm{k},\bm{\ell}\right\rangle
        \left\langle \bm{z},\bm{h}\right\rangle
      \Bigr).
\end{align*}
Here and below $\alpha=\alpha(\bm{z})$.  Since $\alpha\le1$ and
$\alpha\left\lVert \bm{z}\right\rVert \le R$, unit directions satisfy
\begin{equation}
  \left\lvert D\alpha[\bm{h}]\right\rvert \le\frac1R,
  \qquad
  \left\lvert D^2\alpha[\bm{h},\bm{k}]\right\rvert \le\frac4{R^2}.
  \label{eq:alpha-first-two-bounds}
\end{equation}
Because $\rho_R(\bm{z})=\alpha(\bm{z})\bm{z}$,
\begin{equation}
  D^2\rho_R[\bm{h},\bm{k}]
  =D\alpha[\bm{h}]\bm{k}+D\alpha[\bm{k}]\bm{h}+D^2\alpha[\bm{h},\bm{k}]\bm{z}.
  \label{eq:rho-second-explicit}
\end{equation}
The last term must be bounded before discarding its radial factors:
\begin{align*}
  \left\lvert D^2\alpha[\bm{h},\bm{k}]\right\rvert \left\lVert \bm{z}\right\rVert 
  &\le \frac{3\alpha^5\left\lVert \bm{z}\right\rVert ^3}{R^4}
       +\frac{\alpha^3\left\lVert \bm{z}\right\rVert }{R^2}\\
  &=\frac{3\alpha^2(\alpha\left\lVert \bm{z}\right\rVert )^3}{R^4}
    +\frac{\alpha^2(\alpha\left\lVert \bm{z}\right\rVert )}{R^2} \\
  &\le\frac4R.
\end{align*}
Combining this with \eqref{eq:alpha-first-two-bounds} in
\eqref{eq:rho-second-explicit} gives
$\left\lVert D^2\rho_R\right\rVert _{\mathrm{op}}\le \frac{6}{R}$.

Similarly,
\begin{equation}
  D^3\rho_R[\bm{h},\bm{k},\bm{\ell}]
  = D^2\alpha[\bm{h},\bm{k}]\bm{\ell}+D^2\alpha[\bm{h},\bm{\ell}]\bm{k}
    +D^2\alpha[\bm{k},\bm{\ell}]\bm{h}+D^3\alpha[\bm{h},\bm{k},\bm{\ell}]\bm{z}.
  \label{eq:rho-third-explicit}
\end{equation}
The first three terms contribute at most $\frac{12}{R^2}$.  For the last
term, the displayed formula for $D^3\alpha$ yields
\begin{align*}
  \left\lvert D^3\alpha[\bm{h},\bm{k},\bm{\ell}]\right\rvert \left\lVert \bm{z}\right\rVert 
  &\le \frac{15\alpha^7\left\lVert \bm{z}\right\rVert ^4}{R^6}
      +\frac{9\alpha^5\left\lVert \bm{z}\right\rVert ^2}{R^4}\\
  &=\frac{15\alpha^3(\alpha\left\lVert \bm{z}\right\rVert )^4}{R^6}
    +\frac{9\alpha^3(\alpha\left\lVert \bm{z}\right\rVert )^2}{R^4}
  \le\frac{24}{R^2}.
\end{align*}
Equation~\eqref{eq:rho-third-explicit} therefore gives
$\left\lVert D^3\rho_R\right\rVert _{\mathrm{op}}\le \frac{36}{R^2}$, completing
\eqref{eq:rho-higher-derivatives}.
Since $\bm{U}$ is an isometry from $\mathbb{R}^T$ into $\mathbb{R}^d$, the same bounds
hold for $\bm{q}_{\bm{U}}=\bm{U}^\top\rho_R$.

For $\theta_{\bm{U}}=b_{\eta,T}\circ \bm{q}_{\bm{U}}$, the first derivative satisfies
\[
  \left\lVert D\theta_{\bm{U}}\right\rVert 
  \le\left\lVert Db_{\eta,T}\right\rVert
      \left\lVert D\bm{q}_{\bm{U}}\right\rVert 
  \le C_{\rm ext}\eta
  \le C_\theta\eta.
\]
For the second derivative, the chain rule gives, for unit directions
$\bm{w}_1,\bm{w}_2$,
\begin{align*}
  D^2\theta_{\bm{U}}[\bm{w}_1,\bm{w}_2]
  ={}&D^2b_{\eta,T}
      [D\bm{q}_{\bm{U}}\bm{w}_1,D\bm{q}_{\bm{U}}\bm{w}_2]
      +Db_{\eta,T}[D^2\bm{q}_{\bm{U}}[\bm{w}_1,\bm{w}_2]].
\end{align*}
Using Lemma~\ref{lem:extractor}, \eqref{eq:rho-higher-derivatives},
$R=230\eta\sqrt T$, and $T\ge1$ gives
\begin{align*}
  \left\lVert D^2\theta_{\bm{U}}\right\rVert _{\mathrm{op}}
  &\le C_{\rm ext}
    +C_{\rm ext}\eta\frac6R\\
  &=C_{\rm ext}\left(1+\frac{6}{230\sqrt T}\right)\\
  &\le C_{\rm ext}\left(1+\frac{18}{230}
      +\frac{36}{230^2}\right) \\
  &=C_\theta.
\end{align*}
For the third derivative, the multivariate chain rule produces one
term of type $D^3b(D\bm{q})^3$, three permutations of type
$D^2b(D^2\bm{q},D\bm{q})$, and one term $Db(D^3\bm{q})$.  Therefore
\begin{align*}
  \left\lVert D^3\theta_{\bm{U}}\right\rVert _{\mathrm{op}}
  &\le \frac{C_{\rm ext}}{\eta}
    +3C_{\rm ext}\frac6R
    +C_{\rm ext}\eta\frac{36}{R^2}\\
  &=\frac{C_{\rm ext}}{\eta}
    \left(1+\frac{18}{230\sqrt T}
      +\frac{36}{230^2T}\right)\\
  &\le\frac{C_\theta}{\eta}.
\end{align*}
The value bound
$\left\lvert \theta_{\bm{U}}\right\rvert \le C_{\rm ext}\eta^2\le C_\theta\eta^2$
follows from the $r=0$ case of Lemma~\ref{lem:extractor}.  This proves
\eqref{eq:theta-composition-scales}.

The proof for $a_{\bm{U}}$ is the same, using $A_{\eta,T}=H_{\eta,T}-b_{\eta,T}$.  Specifically,
\begin{align*}
  &\left\lVert DA_{\eta,T}\right\rVert 
  \le23\eta\sqrt T+C_{\rm ext}\eta
  \le(23+C_{\rm ext})\eta\sqrt T
  \le C_A\eta\sqrt T,\\
  &\left\lVert D^2A_{\eta,T}\right\rVert _{\mathrm{op}}
  \le C_{{\rm ch},2}+C_{\rm ext}\le C_A,\\
  &\left\lVert D^3A_{\eta,T}\right\rVert _{\mathrm{op}}
  \le\frac{C_{{\rm ch},3}+C_{\rm ext}}{\eta}
  \le\frac{C_A}{\eta}.
\end{align*}
The first derivative of $a_{\bm{U}}=A\circ \bm{q}_{\bm{U}}$ is therefore at most
$C_A\eta\sqrt T\le C_a\eta\sqrt T$.  For the next two orders, the
same chain-rule formulas give
\begin{align*}
  \left\lVert D^2a_{\bm{U}}\right\rVert _{\mathrm{op}}
  &\le C_A+C_A\eta\sqrt T\frac6R
  =C_A\left(1+\frac6{230}\right)
  \le C_a,\\
  \left\lVert D^3a_{\bm{U}}\right\rVert _{\mathrm{op}}
  &\le\frac{C_A}{\eta}+3C_A\frac6R
    +C_A\eta\sqrt T\frac{36}{R^2}\\
  &=\frac{C_A}{\eta}\left(
    1+\frac{18}{230\sqrt T}
      +\frac{36}{230^2\sqrt T}\right)\\
  &\le\frac{C_a}{\eta}.
\end{align*}
This proves \eqref{eq:a-composition-scales}.

Finally,
\[
  \nabla h_R(\bm{z})=\frac{\bm{z}}{4s(\bm{z})}=\frac{1}{4}\rho_R(\bm{z}).
\]
Thus $\left\lVert \nabla h_R\right\rVert \le \frac{1}{4} R$.  Differentiating once more
gives $\nabla^2h_R=\frac{1}{4} D\rho_R$, whose eigenvalues lie in
$[0,\frac{1}{4}]$ by \eqref{eq:rho-jacobian}.  Nonnegativity and
$h_R(\bm{0})=0$ follow immediately from \eqref{eq:pseudo-huber}.
\end{proof}

\subsection{Proof of Lemma~\ref{lem:population-solution}}
\label{app:population-solution-proof}

\begin{proof}
The first identity follows from
$\mathbb E[\xi/p]=1$. Fix $\bm z$. Consider the candidate
\begin{equation}
  \bar{\bm v}
  =\bm M_\kappa^{-1}
  \begin{pmatrix}\vartheta_{\kappa,\bm U}\\0\end{pmatrix}
  =
  \begin{pmatrix}
    d_\kappa\vartheta_{\kappa,\bm U}\\s_\kappa\vartheta_{\kappa,\bm U}
  \end{pmatrix}
  =
  \begin{pmatrix}
    (d_\kappa/s_\kappa)\theta_{\bm U}\\\theta_{\bm U}
  \end{pmatrix}.
  \label{eq:population-v-candidate}
\end{equation}
By \eqref{eq:theta-composition-scales},
\eqref{eq:amplifier-elementary-bounds}, and
\eqref{eq:eta-zero},
\begin{equation}
  |\bar v_1|
  \le3C_\theta\eta^2
  \le\eta.
  \label{eq:population-v-cutoff-region}
\end{equation}
Hence $\psi_\eta(\bar v_1)=\bar v_1$ and
$\psi_\eta'(\bar v_1)=1$.

For fixed $\bm z$, the Hessian of the $\bm v$-dependent lower block is
\begin{equation}
  \bm M_\kappa
  -\vartheta_{\kappa,\bm U}\psi_\eta''(v_1)\bm e_1\bm e_1^\top.
  \label{eq:population-v-hessian}
\end{equation}
The bounds $s_\kappa\ge\kappa/4$ and
$\lVert\psi_\eta''\rVert_\infty\le C_\psi/\eta$ give
\begin{equation}
  |\vartheta_{\kappa,\bm U}|\lVert\psi_\eta''\rVert_\infty
  \le\frac{4C_\theta C_\psi\eta}{\kappa}
  \le\frac1{2\kappa}.
  \label{eq:population-v-hessian-perturbation}
\end{equation}
Since $\lambda_{\min}(\bm M_\kappa)=\kappa^{-1}$, the block in
\eqref{eq:population-v-hessian} is globally bounded below by
$(2\kappa)^{-1}\bm I_2$. Thus the $\bm v$-subproblem is globally
strongly convex. Its gradient is
\[
  \bm M_\kappa\bm v
  -\vartheta_{\kappa,\bm U}\psi_\eta'(v_1)\bm e_1.
\]
At \eqref{eq:population-v-candidate}, this gradient equals zero, so
$\bar{\bm v}$ is the unique global minimizer.

At this minimizer,
$\bm M_\kappa\bar{\bm v}=\vartheta_{\kappa,\bm U}\bm e_1$ and
$\bar v_1=d_\kappa\vartheta_{\kappa,\bm U}$. Therefore the minimized block has
value
\begin{align*}
  \frac12\bar{\bm v}^{\top}\bm M_\kappa\bar{\bm v}
  +\frac{d_\kappa}{2}\vartheta_{\kappa,\bm U}^2
  -\vartheta_{\kappa,\bm U}\psi_\eta(\bar v_1)
  =\frac{d_\kappa}{2}\vartheta_{\kappa,\bm U}^2
   +\frac{d_\kappa}{2}\vartheta_{\kappa,\bm U}^2
   -d_\kappa\vartheta_{\kappa,\bm U}^2
  =0.
\end{align*}
After minimizing in $\bm v$, the lower objective is exactly
\[
  \bm z\longmapsto
  \frac1{2\kappa}\lVert\bm z\rVert^2-\langle\bm x,\bm z\rangle.
\]
Its unique minimizer is $\bm z^*(\bm x)=\kappa\bm x$. Together with
\eqref{eq:population-v-candidate}, this proves
\eqref{eq:exact-lower-solution}--\eqref{eq:exact-auxiliary-solution}.

Finally, the upper objective reads the second auxiliary coordinate,
which equals $\theta_{\bm U}(\kappa\bm x)$. Using $H_{\eta,T}=A_{\eta,T}+b_{\eta,T}$,
\begin{align*}
  \Phi_{\bm U}^\kappa(\bm x)
  &=a_{\bm U}(\kappa\bm x)
    +\theta_{\bm U}(\kappa\bm x)+h_R(\kappa\bm x)\\
  &=H_{\eta,T}(\bm q_{\bm U}(\kappa\bm x))+h_R(\kappa\bm x),
\end{align*}
which is \eqref{eq:exact-hyperobjective}.
\end{proof}

\subsection{Proofs of Lemmas~\ref{lem:amplifier-derivatives} and
\ref{lem:regularity}}
\label{app:regularity-proof}

\begin{proof}
We first prove Lemma~\ref{lem:amplifier-derivatives}. Since
$s_\kappa\ge\kappa/4$, \eqref{eq:theta-composition-scales} immediately
gives \eqref{eq:attenuated-derivative-scales}.  The first derivatives
of the compensated block are
\begin{equation}
  D_z\mathcal R_{\kappa,\bm U} =(d\vartheta_{\kappa,\bm U}-\psi_\eta)D\vartheta_{\kappa,\bm U},
  \qquad
  \partial_{v_1}\mathcal R_{\kappa,\bm U}=-\vartheta_{\kappa,\bm U}\psi_\eta',
  \qquad
  \partial_{v_2}\mathcal R_{\kappa,\bm U}=0.
  \label{eq:R-first-derivatives}
\end{equation}
By \eqref{eq:amplifier-elementary-bounds},
\eqref{eq:attenuated-derivative-scales}, and
\eqref{eq:psi-derivative-scale},
\[
  |d\vartheta_{\kappa,\bm U}-\psi_\eta|
  \le4C_\theta\eta^2+C_\psi\eta
  \le(4C_\theta+C_\psi)\eta.
\]
Consequently,
\begin{alignat}{2}
  &\lVert D_z\mathcal R_{\kappa,\bm U}\rVert
  \le4C_\theta(4C_\theta+C_\psi)\frac{\eta^2}{\kappa},
  &&\qquad
  |\partial_{v_1}\mathcal R_{\kappa,\bm U}|
  \le4C_\theta C_\psi\frac{\eta^2}{\kappa}.
  \label{eq:R-gradient-block-bounds}
\end{alignat}
The norm of the full gradient is at most the sum of these two block
norms, proving the first bound in
\eqref{eq:amplifier-derivative-scales} with $C_{\rm grad}$.

Differentiating \eqref{eq:R-first-derivatives} gives the nonzero
Hessian blocks
\begin{equation}
    \begin{split}
    &D^2_{zz}\mathcal R_{\kappa,\bm U}
    =(d\vartheta_{\kappa,\bm U}-\psi_\eta)D^2\vartheta_{\kappa,\bm U}
      +dD\vartheta_{\kappa,\bm U} D\vartheta_{\kappa,\bm U}^\top,\\
  &D^2_{zv_1}\mathcal R_{\kappa,\bm U}=-\psi_\eta'D\vartheta_{\kappa,\bm U},\\
  &\partial^2_{v_1v_1}\mathcal R_{\kappa,\bm U}=-\vartheta_{\kappa,\bm U}\psi_\eta''.
  \label{eq:R-second-derivatives}
\end{split}
\end{equation}
Using $d_\kappa\le\kappa$, $0<\eta\le1$, and the same derivative
bounds yields
\begin{alignat}{3}
  &\lVert D^2_{zz}\mathcal R_{\kappa,\bm U}\rVert_{\rm op}
  \le(32C_\theta^2+4C_\theta C_\psi)\frac{\eta}{\kappa},
  &&\quad
  \lVert D^2_{zv_1}\mathcal R_{\kappa,\bm U}\rVert
  \le4C_\theta C_\psi\frac{\eta}{\kappa},
  &&\quad
  |\partial^2_{v_1v_1}\mathcal R_{\kappa,\bm U}|
  \le4C_\theta C_\psi\frac{\eta}{\kappa}.
  \label{eq:R-Hessian-block-bounds}
\end{alignat}
The operator norm of a symmetric block matrix is at most the sum of
the norms of its diagonal blocks and twice the norms of its strict
upper-triangular blocks.  Therefore
\begin{equation}
  \left\lVert D^2_{(z,v)}
    \mathcal R_{\kappa,\bm U}(\bm z,\bm v)\right\rVert_{\rm op}
  \le C_{\rm Hess}\frac{\eta}{\kappa}.
  \label{eq:R-Hessian-small}
\end{equation}

The unperturbed lower-variable Hessian is
$\operatorname{diag}(\kappa^{-1}\bm I,\bm M_\kappa)$, whose smallest
eigenvalue is $\kappa^{-1}$.  By \eqref{eq:R-Hessian-small}, Weyl's
inequality, and $\eta\le c_{\rm reg}\le(2C_{\rm Hess})^{-1}$,
\begin{equation}
  D^2_{(z,v)}g_{\bm U}^\kappa
  \succeq
  \left(\frac1\kappa-C_{\rm Hess}\frac\eta\kappa\right)\bm I
  \succeq\frac1{2\kappa}\bm I.
  \label{eq:population-strong-convexity}
\end{equation}
This proves the claimed strong convexity.

In the variable order $(\bm x,\bm z,\bm v)$, write the full Hessian as
\begin{equation}
  \nabla^2g_{\bm U}^\kappa
  =
  \begin{pmatrix}
    0&-\bm I&0\\
    -\bm I&\kappa^{-1}\bm I&0\\
    0&0&\bm M_\kappa
  \end{pmatrix}
  +
  \begin{pmatrix}
    0&0\\
    0&D^2_{(z,v)}\mathcal R_{\kappa,\bm U}
  \end{pmatrix},
  \label{eq:full-g-Hessian}
\end{equation}
where the second display is blocked as $\bm x$ versus
$(\bm z,\bm v)$.  The $(\bm x,\bm z)$ block of the first matrix obeys
the elementary estimate
\[
  \lVert\bm w_z\rVert^2
  +\lVert-\bm w_x+\kappa^{-1}\bm w_z\rVert^2
  \le3(\lVert\bm w_x\rVert^2+\lVert\bm w_z\rVert^2),
\]
and hence has operator norm at most $\sqrt3<3$.  Since
$\lVert\bm M_\kappa\rVert_{\rm op}=1$, the first matrix has operator
norm less than three.  Together with
\eqref{eq:R-Hessian-small}, this gives
\[
  \sup_{\bm x,\bm z,\bm v}
  \lVert\nabla^2g_{\bm U}^\kappa(\bm x,\bm z,\bm v)\rVert_{\rm op}
  \le3+C_{\rm Hess}\le8+C_{\rm Hess}=\bar L_g.
\]
Hence $\nabla g_{\bm U}^\kappa$ is globally
$\bar L_g$-Lipschitz.

It remains to prove the third-derivative bound asserted in
Lemma~\ref{lem:amplifier-derivatives}. Differentiating
\eqref{eq:R-second-derivatives}, the four nonzero tensor types are
\[
\begin{array}{@{}l@{\qquad}l@{}}
  D^3_{zzz}\mathcal R_{\kappa,\bm U}
  =(d\vartheta_{\kappa,\bm U}-\psi_\eta)D^3\vartheta_{\kappa,\bm U}
   +d\displaystyle\sum_{\rm three\ placements}
    D^2\vartheta_{\kappa,\bm U}\otimes D\vartheta_{\kappa,\bm U},
  &
  D^3_{zzv_1}\mathcal R_{\kappa,\bm U}=-\psi_\eta'D^2\vartheta_{\kappa,\bm U},
  \\[1mm]
  D^3_{zv_1v_1}\mathcal R_{\kappa,\bm U}=-\psi_\eta''D\vartheta_{\kappa,\bm U},
  &
  \partial^3_{v_1v_1v_1}\mathcal R_{\kappa,\bm U}=-\vartheta_{\kappa,\bm U}\psi_\eta'''.
\end{array}
\]
The $zzz$ block is at most
$(64C_\theta^2+4C_\theta C_\psi)/\kappa$, and each of the remaining
three block types is at most $4C_\theta C_\psi/\kappa$.  Expanding a
unit trilinear form gives one $zzz$ term, three $zzv_1$ terms, three
$zv_1v_1$ terms, and one $v_1v_1v_1$ term. Thus
\begin{equation}
  \lVert D^3_{(z,v)}\mathcal R_{\kappa,\bm U}\rVert_{\rm op}
  \le\frac{C_{\rm third}}\kappa,
  \label{eq:R-third-derivative-bound}
\end{equation}
which completes the proof of
\eqref{eq:amplifier-derivative-scales}.  Since $\kappa\ge2$, the
right-hand side of \eqref{eq:R-third-derivative-bound} is at most
$C_{\rm third}$.  All remaining terms in $g$ are quadratic, so
integrating that bound along a line segment proves that $\nabla^2g$
is globally $C_{\rm third}$-Lipschitz, and hence globally
$\bar\rho=(C_{\rm third}+1)$-Lipschitz.

For the upper objective, the only nonzero Hessian block is
$D^2_{zz}f_{\bm U}^\kappa=D^2a_{\bm U}+D^2h_R$; the coordinate $v_2$
appears linearly. Lemma~\ref{lem:composition-scales} and
\eqref{eq:huber-properties} give
\[
  \sup_{\bm x,\bm z,\bm v}
  \lVert\nabla^2f_{\bm U}^\kappa(\bm x,\bm z,\bm v)\rVert_{\rm op}
  \le C_a+\frac{1}{4}=\bar L_f.
\]
Thus, $\nabla f$ is globally $\bar L_f$-Lipschitz.  Moreover,
\begin{align}
  \left\lVert \nabla_{(z,v)}f_{\bm{U}}^\kappa\right\rVert 
  &\le\left\lVert Da_{\bm{U}}\right\rVert +\left\lVert \nabla h_R\right\rVert +1
  \notag\\
  &\le(C_a+\frac{115}{2})\eta\sqrt T+1\\
  &\le1+\frac{C_a+115/2}{\sqrt{C_\Delta}}\sqrt{\Delta} \\
  &=\bar C_f(\Delta).
  \label{eq:upper-lower-gradient-bound}
\end{align}
The last inequality follows from
$C_\Delta\eta^2T\le\Delta$: dividing by $C_\Delta$ and taking
square roots gives
$\eta\sqrt T\le\sqrt{\Delta/C_\Delta}$.

The lower-variable Hessian satisfies
\begin{align*}
  L_y
  &=
  \sup_{\bm{x},\bm{z},\bm v}
  \left\|
    D^2_{(z,v)}g_{\bm{U}}^\kappa(\bm{x},\bm{z},\bm v)
  \right\|_{\mathrm{op}}
  \\
  &\le
  \left\|
    \operatorname{diag}(\kappa^{-1}\bm I,\bm M_\kappa)
  \right\|_{\mathrm{op}}
  +
  \left\|
    D^2_{(z,v)}\mathcal{R}_{\kappa,\bm{U}}(\bm{z},\bm v)
  \right\|_{\mathrm{op}}
  \\
  &\le1+C_{\rm Hess}\frac\eta\kappa \\
  &\le1+\frac1{2\kappa}\\
  &\le\frac54.
\end{align*}
Here the penultimate inequality uses
$C_{\rm Hess}\eta\le1/2$, while the last uses $\kappa\ge2$.
Together with $\mu_g^{\rm act}\ge\mu_g=(2\kappa)^{-1}$ from
\eqref{eq:population-strong-convexity}, this gives
$\kappa_y=L_y/\mu_g^{\rm act}\le(5/2)\kappa
\le14\kappa=C_\kappa\kappa$. It remains to prove that the actual lower-level condition number cannot be substantially smaller than the construction parameter
\(\kappa\).  
Define
\[
  \bm{q}^\circ
  =
  (\eta,\ldots,\eta)\in\mathbb R^T,
  \qquad
  \bm{r}^\circ=\bm{U}\bm{q}^\circ\in\mathbb R^d.
\]
Since \(\bm{U}^\top \bm{U}=\bm{I}_T\), the linear map
\(\bm{q}\mapsto \bm{U}\bm{q}\) is an
isometry on \(\mathbb R^T\).  Consequently,
\[
  \left\lVert \bm{r}^\circ\right\rVert ^2
  =
  \left\lVert \bm{U}\bm{q}^\circ\right\rVert ^2
  =
  (\bm{q}^\circ)^\top \bm{U}^\top \bm{U}\bm{q}^\circ
  =
  \left\lVert \bm{q}^\circ\right\rVert ^2
  =
  \eta^2T,
\]
Thus,
\begin{equation}
    \left\lVert \bm{r}^\circ\right\rVert 
  =
  \eta\sqrt T
  =
  \frac{R}{230}
  <
  R.
  \label{eq:reverse-r-inside-ball}
\end{equation}

We next verify explicitly that every point in the open ball of radius
\(R\) belongs to the range of \(\rho_R\).  In particular, define
\begin{equation}
  \bm{z}^\circ
  =
  \frac{\bm{r}^\circ}
  {\sqrt{1-\left\lVert \bm{r}^\circ\right\rVert ^2/R^2}}.
  \label{eq:reverse-z-circ}
\end{equation}
This vector is finite because
\(\left\lVert \bm{r}^\circ\right\rVert <R\).  Moreover,
\begin{align*}
  \frac{\left\lVert \bm{z}^\circ\right\rVert ^2}{R^2}
  =
  \frac{\left\lVert \bm{r}^\circ\right\rVert ^2/R^2}
  {1-\left\lVert \bm{r}^\circ\right\rVert ^2/R^2},
\end{align*}
which implies
\[
1+\frac{\left\lVert \bm{z}^\circ\right\rVert ^2}{R^2}
  =
  \frac{1}
  {1-\left\lVert \bm{r}^\circ\right\rVert ^2/R^2}
\]
It follows that
\begin{align}
  \rho_R(\bm{z}^\circ)
  &=
  \frac{\bm{z}^\circ}
  {\sqrt{1+\left\lVert \bm{z}^\circ\right\rVert ^2/R^2}}
  \notag\\
  &=
  \frac{
    \bm{r}^\circ/
    \sqrt{1-\left\lVert \bm{r}^\circ\right\rVert ^2/R^2}
  }{
    1/\sqrt{1-\left\lVert \bm{r}^\circ\right\rVert ^2/R^2}
  }
  \notag\\
  &=
  \bm{r}^\circ\notag\\
  &=
  \bm{U}\bm{q}^\circ.
  \label{eq:reverse-soft-projection}
\end{align}
Therefore,
\begin{equation}
  \bm{q}_{\bm{U}}(\bm{z}^\circ)
  =
  \bm{U}^\top\rho_R(\bm{z}^\circ)
  =
  \bm{U}^\top \bm{U}\bm{q}^\circ
  =
  \bm{q}^\circ.
  \label{eq:reverse-hidden-coordinate}
\end{equation}

Every coordinate of \(\bm{q}^\circ\) has magnitude \(\eta\), and hence
\[
  \left\lvert q_i^\circ\right\rvert 
  =
  \eta
  >
  \frac{\eta}{2},
  \qquad
  1\le i\le T.
\]
By the definition of the progress function, this gives
\begin{equation}
  \operatorname{prog}_{\eta/2}(\bm{q}^\circ)=T.
  \label{eq:reverse-full-progress}
\end{equation}
The full-progress case of Lemma~\ref{lem:extractor} states that the
value and derivatives through order three of every summand defining
\(b_{\eta,T}\) vanish at \(\bm{q}^\circ\).
In particular,
\begin{equation}
  b_{\eta,T}(\bm{q}^\circ)=0,
  \qquad
  Db_{\eta,T}(\bm{q}^\circ)=0,
  \qquad
  D^2b_{\eta,T}(\bm{q}^\circ)=0.
  \label{eq:reverse-b-zero-jet}
\end{equation}

Recall that
\[
  \theta_{\bm{U}}(\bm{z})
  =
  b_{\eta,T}(\bm{q}_{\bm{U}}(\bm{z})).
\]
Combining \eqref{eq:reverse-hidden-coordinate} and
\eqref{eq:reverse-b-zero-jet} yields
\begin{equation}
  \theta_{\bm{U}}(\bm{z}^\circ)
  =
  b_{\eta,T}(\bm{q}^\circ)
  =
  0.
  \label{eq:reverse-theta-value-zero}
\end{equation}
The first-order chain rule gives
\[
  D\theta_{\bm{U}}(\bm{z}^\circ)
  =
  Db_{\eta,T}(\bm{q}^\circ)D\bm{q}_{\bm{U}}(\bm{z}^\circ)
  =
  0.
\]
The second-order chain rule gives, for arbitrary directions
\(\bm{w}_1,\bm{w}_2\in\mathbb R^d\),
\begin{align*}
  D^2\theta_{\bm{U}}(\bm{z}^\circ)[\bm{w}_1,\bm{w}_2]
  ={}&
  D^2b_{\eta,T}(\bm{q}^\circ)
  \bigl[
    D\bm{q}_{\bm{U}}(\bm{z}^\circ)\bm{w}_1,
    D\bm{q}_{\bm{U}}(\bm{z}^\circ)\bm{w}_2
  \bigr]
  \\
  &+
  Db_{\eta,T}(\bm{q}^\circ)
  \bigl[
    D^2\bm{q}_{\bm{U}}(\bm{z}^\circ)[\bm{w}_1,\bm{w}_2]
  \bigr]
  \\
  ={}&0.
\end{align*}
Consequently,
\begin{equation}
  \theta_{\bm{U}}(\bm{z}^\circ)=0,
  \qquad
  D\theta_{\bm{U}}(\bm{z}^\circ)=0,
  \qquad
  D^2\theta_{\bm{U}}(\bm{z}^\circ)=0.
  \label{eq:reverse-theta-zero-jet}
\end{equation}

Since $\vartheta_{\kappa,\bm U}=\theta_{\bm U}/s_\kappa$, its value
and first two derivatives also vanish at $\bm z^\circ$.  Substitution
into \eqref{eq:R-second-derivatives} shows that, for every
$\bm v\in\mathbb R^2$,
\begin{equation}
  D^2_{(z,v)}\mathcal{R}_{\kappa,\bm{U}}
  (\bm{z}^\circ,\bm v)=0.
  \label{eq:reverse-compensation-hessian-zero}
\end{equation}

Recall that
\[
  g_{\bm{U}}^\kappa(\bm{x},\bm{z},\bm v)
  =
  \frac1{2\kappa}\left\lVert \bm{z}\right\rVert ^2
  -
  \left\langle \bm{x},\bm{z}\right\rangle 
  +
  \frac12\bm v^\top\bm M_\kappa\bm v
  +
  \mathcal{R}_{\kappa,\bm{U}}(\bm{z},\bm v).
\]
When \(\bm{x}\) is fixed, the term \(-\left\langle \bm{x},\bm{z}\right\rangle \) is linear in \(\bm{z}\) and
therefore has zero lower-variable Hessian.  Combining this observation
with \eqref{eq:reverse-compensation-hessian-zero}, we obtain, for
every \(\bm{x}\) and $\bm v$,
\begin{equation}
  D^2_{(z,v)}g_{\bm{U}}^\kappa(\bm{x},\bm{z}^\circ,\bm v)
  =
  \operatorname{diag}(\kappa^{-1}\bm I,\bm M_\kappa).
  \label{eq:reverse-exact-lower-hessian}
\end{equation}

By Definition~\ref{def:condition-numbers}, the actual curvature
modulus and lower smoothness are $\mu_g^{\rm act}$ and $L_y$.
The matrix in \eqref{eq:reverse-exact-lower-hessian} has smallest
eigenvalue $\kappa^{-1}$ and operator norm one by
\eqref{eq:amplifier-spectrum}.
Since an infimum cannot exceed the value at a particular point,
\eqref{eq:reverse-exact-lower-hessian} implies
\begin{equation}
  \mu_g^{\rm act}\le\frac1\kappa.
  \label{eq:reverse-mu-upper}
\end{equation}
Likewise, since a supremum cannot be smaller than the value at a
particular point,
\begin{equation}
  L_y\ge1.
  \label{eq:reverse-L-lower}
\end{equation}
Therefore the actual lower-level condition number satisfies
\begin{align}
  \kappa_y
  =
  \frac{L_y}{\mu_g^{\rm act}}
  \ge
  \frac{1}{1/\kappa}
  =
  \kappa.
  \label{eq:reverse-condition-number}
\end{align}
Together with the previously established upper bound
\(\kappa_y\le C_\kappa\kappa\), this proves
\[
  \kappa
  \le
  \kappa_y
  \le
  C_\kappa\kappa.
\]
Hence the construction parameter and the actual lower-level condition
number are equivalent up to universal numerical constants:
\[
  \kappa_y=\Theta(\kappa).
\]
\end{proof}

\subsection{Proof of Lemma~\ref{lem:oracle-properties}}
\label{app:oracle-proof}

\begin{proof}
The only random term is
$(\xi/p)\mathcal R_{\kappa,\bm U}$.  Freshness gives
$\mathbb E[\xi/p\mid\mathscr F_{t-1}]=1$, so differentiation and
conditional expectation yield \eqref{eq:oracle-unbiasedness}.  In the
variable order $(\bm x,\bm z,\bm v)$, the complete gradient noise is
\begin{equation}
  \nabla\widehat g_{\bm{U}}^\kappa-\nabla g_{\bm{U}}^\kappa
  =\left(\frac\xi p-1\right)
  \begin{pmatrix}
    0\\
    (d_\kappa\vartheta_{\kappa,\bm U}-\psi_\eta)
      D\vartheta_{\kappa,\bm U}\\
    -\vartheta_{\kappa,\bm U}\psi_\eta'\bm e_1
  \end{pmatrix}.
  \label{eq:complete-noise-vector}
\end{equation}
The bounds used in \eqref{eq:R-gradient-block-bounds} give
\begin{alignat}{2}
  &\left\lVert
    (d_\kappa\vartheta_{\kappa,\bm U}-\psi_\eta)
    D\vartheta_{\kappa,\bm U}
  \right\rVert
  \le4C_\theta(4C_\theta+C_\psi)\frac{\eta^2}{\kappa},
  &&\quad
  \left\lvert\vartheta_{\kappa,\bm U}\psi_\eta'\right\rvert
  \le4C_\theta C_\psi\frac{\eta^2}{\kappa}.
  \label{eq:noise-block-fixed-bounds}
\end{alignat}
Thus the deterministic vector in
\eqref{eq:complete-noise-vector} has squared norm at most
$C_{\rm var}\eta^4/\kappa^2$ by the definition
\eqref{eq:variance-constant}.  For $\xi\sim\operatorname{Ber}(p)$,
\begin{align*}
  \mathbb{E}\left(\frac\xi p-1\right)^2
  &=p\left(\frac1p-1\right)^2+(1-p)\\
  &=\frac{(1-p)^2}{p}+1-p \\
  &=\frac{1-p}{p}.
\end{align*}
This proves \eqref{eq:oracle-variance-bound}. 

If $p=1$, the noise is zero.  Otherwise, we choose
$p=C_{\rm var}\eta^4/(\sigma^2\kappa^2)$, and
\[
  C_{\rm var}\frac{\eta^4}{\kappa^2}\frac{1-p}{p}
  =\sigma^2(1-p)\le\sigma^2.
\]
Equation \eqref{eq:p-inverse} follows from the two branches.
\end{proof}

\section{Proofs for Section \ref{sec:rotation-lifting}}
\label{app:progress-proofs}

\subsection{Proof of Lemma~\ref{lem:stopped-prefix-haar}}
\label{app:stopped-prefix-haar-proof}

\begin{proof}
For a fixed number \(\ell\) of initial frame columns, disintegration of
Haar measure with respect to these columns gives the completion kernel
\(\mathsf K_\ell\); this is the conditional-rotation statement used in
\citet[Lemma~14]{arjevani2023lower}.  To pass to the random index $J$,
test the displayed candidate conditional expectation against a
bounded \(\mathscr H_J\)-measurable function.  On the stratum
\(\{J=\ell\}\), that test function is a Borel function of
\((Z,\bm U_{\le\ell})\).  The event \(\{J=\ell\}\) is measurable with
respect to \(Z\) and is therefore independent of \(\bm U\).  Applying
the conditional Haar disintegration on this stratum gives the
conditional-expectation identity.  The defining property of a Borel
probability kernel also makes the kernel integral Borel in
\((Z,\bm U_{\le\ell})\), so the displayed candidate is
\(\mathscr H_J\)-measurable.  Summing over the finite partition
\(\{J=0\},\ldots,\{J=T\}\) proves the formula.  The marginal statement
follows from rotational invariance of the completion kernel.
\end{proof}

\subsection{Proof of Lemma~\ref{lem:haar-cited}}
\label{app:haar-proof}

\begin{proof}
We use the adaptive random-rotation argument of
\citet[Appendix~B.1.2, especially Lemmas~13--14]
{arjevani2023lower}.  Lemma~14 provides the relevant conditional
rotational invariance, while the proof of Lemma~13 combines it with
the standard spherical-cap estimate.  The coupled-transcript argument
below is our specialization of these
ingredients and is not a verbatim restatement of either cited lemma.

\emph{Step 1: augment the transcript with the first recorded frame
columns.}
For the purpose of the conditional-Haar argument, after call \(t\)
we record the Bernoulli bits \(\xi_1,\ldots,\xi_t\) and the first
\(L_t\) columns \(\bm{u}_1,\ldots,\bm{u}_{L_t}\) in an augmented transcript.  Let
\(\mathcal T_t^{\rm orig}\) denote the transcript actually observed
by the original procedure, and define
\[
  \mathcal T_t
  =
  \bigl(
    \mathcal T_t^{\rm orig},
    \xi_1,\ldots,\xi_t,
    \bm{u}_1,\ldots,\bm{u}_{L_t}
  \bigr).
\]
The additional transcript entries are auxiliary variables used only in the
proof; they are not additional oracle observations available to the original
procedure.  Throughout the remainder of this proof,
\(\mathcal T_t^{\rm orig}\) denotes the original query-response
transcript, whereas \(\mathcal T_t\) denotes the augmented
transcript defined above.

For the algorithm fixed in Proposition~\ref{prop:T-over-p}, the
original query and output rules are induced by
$\operatorname{Query}_t^{\mathsf A}$ and
$\operatorname{Output}_N^{\mathsf A}$ from
Definition~\ref{def:algorithm-class}.  The query descriptor and its
associated vector are obtained from the algorithmic query through the Borel map
\[
  (\bm{x}_t,\bm{z}_t,\bm v_t)
  \longmapsto
  \left(
    (\bm{x}_t,\bm{z}_t,\bm v_t),
    \frac{\rho_R(\bm{z}_t)}{\eta}
  \right).
\]

Let \(\pi_t^{\rm orig}\) denote the canonical projection onto the
original transcript component, so that
\[
  \pi_t^{\rm orig}(\mathcal T_t)
  =
  \mathcal T_t^{\rm orig}.
\]
The original query rule is extended to the augmented transcript space
by
\[
  \operatorname{Query}_t^{\rm aug}
  (\omega_{\rm alg},\mathcal T_{t-1})
  =
  \operatorname{Query}_t^{\rm orig}
  \bigl(
    \omega_{\rm alg},
    \pi_{t-1}^{\rm orig}(\mathcal T_{t-1})
  \bigr),
\]
and the output rule is extended by
\[
  \operatorname{Output}^{\rm aug}
  (\omega_{\rm alg},\mathcal T_N)
  =
  \operatorname{Output}^{\rm orig}
  \bigl(
    \omega_{\rm alg},
    \pi_N^{\rm orig}(\mathcal T_N)
  \bigr).
\]
Thus the additional Bernoulli bits and frame columns are available
for the measurability argument but do not affect the actual queries
or the final output of the original procedure.

For uniform notation, let \(J_t\) be the number of recorded frame
columns associated with each of the \(n=N+1\) vectors, and set
\begin{equation}
  J_t
  =
  \begin{cases}
    L_{t-1},&1\le t\le N,\\
    L_N,&t=n=N+1.
  \end{cases}
  \label{eq:probe-prefix-length}
\end{equation}
Thus \(J_t\) is the number of columns recorded in the augmented
transcript when \(\bm{r}_t\) is formed.  In particular, the additional
output vector \(\bm{r}_n=\bm{r}_{N+1}\) is assigned \(J_n=L_N\), because it receives no
new Bernoulli draw or oracle response.

\emph{Step 2: construct a coupled transcript with the required
measurability.}
Let \(\omega_{\rm alg}\) be the random seed of the algorithm from
Definition~\ref{def:algorithm-class}.  By construction, it is
independent of \(\bm{U}\) and of \(\xi_1,\ldots,\xi_N\).  After
conditioning on \(\omega_{\rm alg}\), the procedure's
query and output rules are Borel deterministic maps of their
transcripts.  We construct recursively an augmented coupled
transcript \(\widetilde{\mathcal T}_t\), whose original
query-response component is
\begin{equation}
  \widetilde{\mathcal T}_t^{\rm orig}
  =
  \pi_t^{\rm orig}(\widetilde{\mathcal T}_t),
  \label{eq:coupled-original-projection}
\end{equation}
together with coupled query descriptors
\(\widetilde{\bm a}_t\), coupled vectors
\(\widetilde{\bm r}_t\), and coupled responses
\(\widetilde{\bm Y}_t\).

Start with the same original and augmented initial transcripts:
\[
  \widetilde{\mathcal T}_0^{\rm orig}
  =
  \mathcal T_0^{\rm orig},
  \qquad
  \widetilde{\mathcal T}_0
  =
  \mathcal T_0.
\]
Suppose \(\widetilde{\mathcal T}_{t-1}\) has been constructed.
Feed this augmented transcript and the same seed
\(\omega_{\rm alg}\) to the augmented query rule, and denote its
next query descriptor and associated vector by
\begin{equation}
  \widetilde{\bm a}_t
  =
  (\widetilde{\bm x}_t,\widetilde{\bm z}_t,\widetilde{\bm v}_t)
  =
  \operatorname{Query}_t^{\rm aug}
  \bigl(
    \omega_{\rm alg},
    \widetilde{\mathcal T}_{t-1}
  \bigr)
  =
  \operatorname{Query}_t^{\rm orig}
  \bigl(
    \omega_{\rm alg},
    \widetilde{\mathcal T}_{t-1}^{\rm orig}
  \bigr),
  \qquad
  \widetilde{\bm r}_t
  =
  \frac{\rho_R(\widetilde{\bm z}_t)}{\eta}.
  \label{eq:coupled-query-rule}
\end{equation}
Define the coupled response for every outcome, including outcomes on
which geometric progress has already failed, by
\begin{equation}
  \widetilde{\bm Y}_t
  =
  M_{t,L_{t-1}}^{(\xi_t)}
  \bigl(
    \widetilde{\mathcal T}_{t-1}^{\rm orig},
    \widetilde{\bm a}_t,
    \widetilde{\bm r}_t,
    \bm u_1,\ldots,\bm u_{L_t}
  \bigr).
  \label{eq:coupled-response-rule}
\end{equation}
Equivalently, the first argument in this display is
\(\pi_{t-1}^{\rm orig}
  (\widetilde{\mathcal T}_{t-1})\), so it belongs to the original
transcript space \(\mathsf T_{t-1}\), exactly as required by
\eqref{eq:formal-prefix-map}.  The definition is legitimate because
\[
  L_t=\min\{T,L_{t-1}+\xi_t\}.
\]
Thus the last argument of \eqref{eq:coupled-response-rule} contains
exactly the first \(L_t\) frame columns required by
\eqref{eq:formal-prefix-map}.

Form the new original component
\(\widetilde{\mathcal T}_t^{\rm orig}\) by applying the original
transcript-update rule to
\(\widetilde{\mathcal T}_{t-1}^{\rm orig}\), the coupled query,
and \(\widetilde{\bm Y}_t\).  Then define the new augmented
coupled transcript by
\begin{equation}
  \widetilde{\mathcal T}_t
  =
  \bigl(
    \widetilde{\mathcal T}_t^{\rm orig},
    \xi_1,\ldots,\xi_t,
    \bm u_1,\ldots,\bm u_{L_t}
  \bigr).
  \label{eq:coupled-augmented-transcript}
\end{equation}
After call \(N\), feed \(\widetilde{\mathcal T}_N\) to
\(\operatorname{Output}^{\rm aug}\), apply the output-to-vector map
$\bm x\mapsto\rho_R(\kappa\bm x)/\eta$, and denote the resulting vector
by \(\widetilde{\bm r}_{N+1}\).

The recursion never evaluates
\(\operatorname{prog}_{1/4}(\bm{U}^\top\widetilde{\bm{r}}_t)\) and never reads a frame column
after \(\bm{u}_{L_t}\).  Consequently, on every event
\(\{L_t=\ell\}\),
\begin{equation}
  \widetilde{\mathcal{T}}_t
  \quad\text{is measurable with respect to}\quad
  \sigma\bigl(
    \omega_{\rm alg},\xi_1,\ldots,\xi_t,\bm{u}_1,\ldots,\bm{u}_\ell
  \bigr).
  \label{eq:coupled-transcript-measurability}
\end{equation}
This is proved directly by induction from
\eqref{eq:coupled-query-rule}--\eqref{eq:coupled-response-rule}.
More precisely, define the information available when the vector \(\bm r_t\) is
formed by
\begin{equation}
  \widetilde{\mathcal I}_t
  =
  \begin{cases}
    \sigma\bigl(
      \omega_{\rm alg},\xi_1,\ldots,\xi_{t-1},
      \bm{u}_1,\ldots,\bm{u}_{J_t}
    \bigr),
      &1\le t\le N,\\[1mm]
    \sigma\bigl(
      \omega_{\rm alg},\xi_1,\ldots,\xi_N,
      \bm{u}_1,\ldots,\bm{u}_{J_n}
    \bigr),
      &t=N+1.
  \end{cases}
  \label{eq:coupled-preprobe-sigma-field}
\end{equation}
For a random index $J\in\{0,\ldots,T\}$, the notation
$\sigma(Z,\bm u_1,\ldots,\bm u_J)$ denotes the sigma-field characterized
stratumwise by
\[
  E\cap\{J=\ell\}
  \in\sigma(Z,\bm u_1,\ldots,\bm u_\ell),
  \qquad 0\le\ell\le T.
\]
Then \(\widetilde{\bm{r}}_1,\ldots,\widetilde{\bm{r}}_t\), the subspace
\(\mathcal U_t\) spanned by the first \(J_t\) frame columns, and the
Gram-Schmidt basis introduced below are
all \(\widetilde{\mathcal I}_t\)-measurable.  In particular, no
response or Bernoulli bit from call \(t\) is used when
\(\widetilde{\bm{r}}_t\) is formed.

We also record the agreement property.  For \(1\le t\le N\), define coupled progress event as
\begin{equation}
  \widetilde{\mathcal E}_t
  =
  \left\{
    \operatorname{prog}_{1/4}(\bm{U}^\top\widetilde{\bm{r}}_t)
    \le L_{t-1}
  \right\}.
  \label{eq:coupled-progress-query-event}
\end{equation}
If
\(\widetilde{\mathcal E}_1,\ldots,
\widetilde{\mathcal E}_{t-1}\) hold, induction shows that
\[
  \widetilde{\mathcal T}_{t-1}^{\rm orig}
  =
  \mathcal T_{t-1}^{\rm orig},
  \qquad
  \widetilde{\mathcal T}_{t-1}
  =
  \mathcal T_{t-1},
  \qquad
  \widetilde{\bm a}_t=\bm a_t,
  \qquad
  \widetilde{\bm r}_t=\bm r_t.
\]
Indeed, both transcript equalities are trivial before the first call.
Suppose they hold before call \(s\) and
\(\widetilde{\mathcal E}_s\) also holds.  Then
\eqref{eq:formal-prefix-identity}, with
\(\ell=L_{s-1}\) and \(b=\xi_s\), says that the actual response equals
the coupled response in \eqref{eq:coupled-response-rule}, because
both maps receive the same original-transcript argument
\(\mathcal T_{s-1}^{\rm orig}\).  Hence the original
query-response transcripts remain identical after call \(s\).
The Bernoulli bits and the first \(L_s\) frame columns are shared by
construction,
so \eqref{eq:coupled-augmented-transcript} also makes the augmented
transcripts identical.  Thus the actual and coupled processes
coincide up to, but not beyond, their first progress violation.  If
all \(N\) query events hold, their output vectors coincide as well:
\[
  \widetilde{\bm{r}}_{N+1}=\bm{r}_{N+1}.
\]

\emph{Step 3: apply the conditional Haar law and state the cap bound.}
For each index \(t\), take \(Z\) in
Lemma~\ref{lem:stopped-prefix-haar} to consist of the algorithm seed and
the Bernoulli bits available before that vector is formed, and take \(J=J_t\).
The index \(J_t\) is a Borel function of those bits and is independent
of \(\bm U\).  Equations
\eqref{eq:coupled-transcript-measurability} and
\eqref{eq:coupled-preprobe-sigma-field} show, stratum by stratum, that
the coupled transcript and coupled vector are measurable with respect
to \(\mathscr H_{J_t}\).  Hence, on
\(\{J_t=\ell\}\), the remaining columns retain the Haar-completion law
in \(\operatorname{span}\{\bm u_1,\ldots,\bm u_\ell\}^\perp\).  This is
the precise point at which the restriction of the frame dependence to
the first \(J_t\) columns and independence of the Bernoulli bits from
the frame are used.

Moreover, there is a numerical constant \(c_{\rm sph}>0\) such that
the following spherical-cap estimate holds: if \(\bm{u}\) is uniform on
the unit sphere of an \(m\)-dimensional Euclidean space and \(\bm{w}\) is
a fixed unit vector in that space, then, for every
\(\tau\in(0,1)\),
\begin{equation}
  \mathbb{P}\left(
    \left\lvert \left\langle \bm{u},\bm{w}\right\rangle \right\rvert >\tau
  \right)
  \le
  2\exp\left(-c_{\rm sph}m\tau^2\right).
  \label{eq:spherical-cap-bound}
\end{equation}
The Haar disintegration conditional on a fixed number of initial
columns and this spherical-cap estimate are the two geometric
ingredients imported from the cited Appendix of
\citep{arjevani2023lower}; the extension to the random index was proved
in Lemma~\ref{lem:stopped-prefix-haar}.

\emph{Step 4: define the components of the associated vectors
orthogonal to the recorded frame columns.}
For notational economy, throughout the geometric calculation below
we write \(\bm{r}_t\) for the coupled vector \(\widetilde{\bm{r}}_t\).  The
transfer back to the actual vectors is performed in Step~7.
For a subspace \(\mathcal V\subseteq\mathbb{R}^d\), let
\(\bm{\Pi}_{\mathcal V}\) denote the Euclidean orthogonal projector onto
\(\mathcal V\), and let \(\bm{I}_d\) denote the identity map on
\(\mathbb{R}^d\).  For the vector \(\bm r_t\), define the span of the first
\(J_t\) frame columns by
\begin{equation}
  \mathcal U_t
  =
  \operatorname{span}
  \{\bm{u}_1,\ldots,\bm{u}_{J_t}\},
  \label{eq:exposed-frame-subspace}
\end{equation}
with the convention \(\mathcal U_t=\{0\}\) when \(J_t=0\).
Remove the component in \(\mathcal U_t\) from the first \(t\) vectors and
define
\begin{equation}
  \mathcal W_t
  =
  (\bm{I}_d-\bm{\Pi}_{\mathcal U_t})
  \operatorname{span}\{\bm{r}_1,\ldots,\bm{r}_t\}.
  \label{eq:residual-probe-subspace}
\end{equation}
Let
\begin{equation}
  m_t=\dim(\mathcal W_t).
  \label{eq:residual-probe-dimension}
\end{equation}
Since \(\mathcal W_t\) is the image of a space spanned by \(t\)
vectors,
\begin{equation}
  m_t\le t\le n.
  \label{eq:mt-bound}
\end{equation}
Apply Gram-Schmidt, in the displayed order, to the residual vectors
\[
  (\bm{I}_d-\bm{\Pi}_{\mathcal U_t})\bm{r}_1,\ldots,
  (\bm{I}_d-\bm{\Pi}_{\mathcal U_t})\bm{r}_t,
\]
discarding zero residuals.  This gives an orthonormal basis
\begin{equation}
  \bm{w}_{t,1},\ldots,\bm{w}_{t,m_t}
  \label{eq:residual-probe-basis}
\end{equation}
of \(\mathcal W_t\).  Gram-Schmidt with this deterministic
tie-breaking rule is a Borel operation on each stratum determined by
which residuals vanish.  Hence the basis is
\(\widetilde{\mathcal I}_t\)-measurable.  It may depend on the coupled
transcript and on the first \(J_t\) frame columns, but conditional on
those objects it is fixed.

To make the probability union bound range over a deterministic finite
index set, pad this basis to \(n\) vectors by defining, for every
\(s\in\{1,\ldots,n\}\),
\begin{equation}
  \bar{\bm w}_{t,s}
  =
  \begin{cases}
    \bm w_{t,s},&s\le m_t,\\
    \bm 0,&s>m_t.
  \end{cases}
  \label{eq:padded-residual-basis}
\end{equation}
The padding does not change any basis expansion below.  It only gives
a fixed ambient index set for the cap events.  Since \(m_t\) and the
un-padded basis are \(\widetilde{\mathcal I}_t\)-measurable, the padded
vectors are also \(\widetilde{\mathcal I}_t\)-measurable.

The component of \(\bm{r}_t\) orthogonal to \(\mathcal U_t\) is
\begin{equation}
  \bm{r}_t^\perp
  =
  (\bm{I}_d-\bm{\Pi}_{\mathcal U_t})\bm{r}_t.
  \label{eq:probe-orthogonal-component}
\end{equation}
Because \(\bm{r}_t^\perp\in\mathcal W_t\), it has the unique expansion
\begin{equation}
  \bm{r}_t^\perp
  =
  \sum_{s=1}^{m_t}
  \alpha_{t,s}\bm{w}_{t,s},
  \qquad
  \alpha_{t,s}
  =
  \left\langle \bm{w}_{t,s},\bm{r}_t^\perp\right\rangle .
  \label{eq:probe-basis-expansion}
\end{equation}
Define the coefficient vector
\begin{equation}
  \bm{\alpha}_t
  =
  (\alpha_{t,1},\ldots,\alpha_{t,m_t})
  \in\mathbb{R}^{m_t}.
  \label{eq:probe-coefficient-vector}
\end{equation}
Orthonormality, contractivity of orthogonal projection, and
\eqref{eq:haar-probe-bound} give
\begin{equation}
  \left\lVert \bm{\alpha}_t\right\rVert ^2
  =
  \sum_{s=1}^{m_t}\alpha_{t,s}^2
  =
  \left\lVert \bm{r}_t^\perp\right\rVert ^2
  \le
  \left\lVert \bm{r}_t\right\rVert ^2
  \le
  (\frac{R}{\eta})^2.
  \label{eq:probe-coefficient-bound}
\end{equation}

\emph{Step 5: choose the scalar cap threshold and control all
adaptive basis directions.}
Define the scalar threshold
\begin{equation}
  \tau
  =
  \frac{\eta}{4R\sqrt n}.
  \label{eq:cap-threshold}
\end{equation}
This symbol is a scalar and is unrelated to the coefficient vector
\(\bm{\alpha}_t\).  Since \(R\ge \eta\) and \(n\ge1\), one has
\(\tau\in(0,1)\), as required in
\eqref{eq:spherical-cap-bound}. 

Conditional on \(\widetilde{\mathcal I}_t\), and on each stratum
\(\{J_t=\ell\}\), each remaining frame column is uniform in an
orthogonal complement of dimension \(d-\ell\).  Since \(J_t\le T\),
\begin{equation}
  d-J_t
  \ge
  d-T.
  \label{eq:remaining-haar-dimension}
\end{equation}
The coupled vectors are
\(\widetilde{\mathcal I}_t\)-measurable and therefore fixed under
this conditioning; they impose no additional conditions on the
remaining frame columns.

For every fixed triple
\((t,s,j)\in\{1,\ldots,n\}\times
\{1,\ldots,n\}\times\{1,\ldots,T\}\), define the padded bad cap
event
\begin{equation}
  \mathcal B_{t,s,j}
  =
  \left\{
    s\le m_t,\quad J_t<j,\quad
    \left\lvert
      \left\langle \bm{u}_j,\bar{\bm w}_{t,s}\right\rangle
    \right\rvert >\tau
  \right\}.
  \label{eq:individual-cap-event}
\end{equation}
If \(s>m_t\) or \(j\le J_t\), this event is empty.  Otherwise,
\(\bar{\bm w}_{t,s}=\bm w_{t,s}\) is a unit vector fixed under
conditioning on \(\widetilde{\mathcal I}_t\).
Conditional on \(\widetilde{\mathcal I}_t\), the basis in
\eqref{eq:residual-probe-basis} is fixed.  On each stratum
\(\{J_t=\ell\}\), the remaining columns
\(\bm{u}_{\ell+1},\ldots,\bm{u}_T\) retain their conditional Haar law in
\(\operatorname{span}\{\bm{u}_1,\ldots,\bm{u}_\ell\}^\perp\).  Thus
\eqref{eq:spherical-cap-bound}, together with
\eqref{eq:remaining-haar-dimension}, gives
\begin{equation}
  \mathbb{P}\left(
    \mathcal B_{t,s,j}
    \,\middle|\,\widetilde{\mathcal I}_t
  \right)
  \le
  2\exp\left(
    -c_{\rm sph}(d-T)\tau^2
  \right).
  \label{eq:conditional-cap-probability}
\end{equation}

The events in \eqref{eq:individual-cap-event} are indexed by the fixed
finite set
\begin{equation}
  \{1,\ldots,n\}\times
  \{1,\ldots,n\}\times
  \{1,\ldots,T\},
  \qquad
  \left\lvert
    \{1,\ldots,n\}\times
    \{1,\ldots,n\}\times
    \{1,\ldots,T\}
  \right\rvert
  =n^2T.
  \label{eq:number-of-cap-events}
\end{equation}
The inactive triples correspond to empty events and therefore cost no
probability, while retaining them makes the union index set
deterministic.  Because
\(\tau^2=\eta^2(16 R^2n)^{-1}\), we may choose the numerical
constant \(C_{\rm H} \geq \frac{16}{c_{\rm sph}}\) in
\eqref{eq:haar-dimension-general}, so that 
\begin{equation}
  2\exp\left(
    -c_{\rm sph}(d-T)\tau^2
  \right)
  \le
  \frac{\delta}{n^2T}.
  \label{eq:cap-probability-budget}
\end{equation}
The explicitly constructed coupled transcript is essential here.  It
would not be valid to condition the actual transcript on all previous
progress events and then assert that the remaining columns retain
their Haar law, because those progress events themselves depend on the
remaining columns.  In contrast, the coupled continuation in
\eqref{eq:coupled-response-rule} satisfies the preceding
transcript-measurability statement on every outcome, including after a
progress violation.

\emph{Step 6: bound the coordinates after \(J_t\) for every coupled
vector.}
Fix an index \(t\), and suppose that none of the padded events
\(\mathcal B_{t,s,j}\), with \(1\le s\le n\) and
\(1\le j\le T\), occurs.  Fix an index \(j>J_t\).
For every active basis index \(s\le m_t\), the guards in
\eqref{eq:individual-cap-event} are then satisfied and
\(\bar{\bm w}_{t,s}=\bm w_{t,s}\).  Hence
\(
  \lvert\langle\bm u_j,\bm w_{t,s}\rangle\rvert\le\tau
\)
for all \(1\le s\le m_t\).
Since the frame columns are orthonormal,
\[
  \bm{u}_j\perp\mathcal U_t.
\]
Consequently,
\begin{equation}
  \left\langle \bm{u}_j,\bm{r}_t\right\rangle 
  =
  \left\langle \bm{u}_j,\bm{r}_t^\perp\right\rangle .
  \label{eq:remove-exposed-component}
\end{equation}
Using \eqref{eq:probe-basis-expansion}, Cauchy-Schwarz,
\eqref{eq:probe-coefficient-bound}, the complements of the events
in \eqref{eq:individual-cap-event}, and \(m_t\le n\), we obtain
\begin{align}
  \left\lvert \left\langle \bm{u}_j,\bm{r}_t\right\rangle \right\rvert 
  &=
  \left\lvert 
    \sum_{s=1}^{m_t}
    \alpha_{t,s}\left\langle \bm{u}_j,\bm{w}_{t,s}\right\rangle 
  \right\rvert 
  \notag\\
  &\le
  \left(
    \sum_{s=1}^{m_t}\alpha_{t,s}^2
  \right)^{1/2}
  \left(
    \sum_{s=1}^{m_t}
    \left\lvert \left\langle \bm{u}_j,\bm{w}_{t,s}\right\rangle \right\rvert ^2
  \right)^{1/2}
  \notag\\
  &\le
  \frac{R}{\eta}\sqrt{m_t}\,\tau 
  \notag\\
  &\le
  \frac{R}{\eta}\sqrt n
  \left(
    \frac{\eta}{4 R\sqrt n}
  \right)
  =
  \frac14.
  \label{eq:hidden-coordinate-bound}
\end{align}
Thus every coordinate of \(\bm{U}^\top \bm{r}_t\) with index greater
than \(J_t\) has absolute value at most \(1/4\), and hence
\begin{equation}
  \operatorname{prog}_{1/4}(\bm{U}^\top \bm{r}_t)
  \le
  J_t.
  \label{eq:probe-progress-from-cap-event}
\end{equation}

\emph{Step 7: prove progress for the coupled process and transfer it
to the actual process.}
Extend the notation from \eqref{eq:coupled-progress-query-event} to
the response-free output by defining, for every
\(t\in\{1,\ldots,n\}\),
\begin{equation}
  \widetilde{\mathcal E}_t
  =
  \left\{
    \operatorname{prog}_{1/4}(\bm{U}^\top\widetilde{\bm{r}}_t)\le J_t
  \right\}.
  \label{eq:individual-progress-event}
\end{equation}
For each fixed triple, Step~5 gives the conditional estimate
\eqref{eq:conditional-cap-probability}, and the dimension choice gives
the budget \eqref{eq:cap-probability-budget}.  Taking expectations
removes the conditioning, so
\begin{equation}
  \mathbb P(\mathcal B_{t,s,j})
  \le
  \frac{\delta}{n^2T}
  \qquad
  \text{for every fixed }(t,s,j).
  \label{eq:unconditional-cap-probability}
\end{equation}
Step~6 shows, for every \(t\), that
\begin{equation}
  \widetilde{\mathcal E}_t^c
  \subseteq
  \bigcup_{s=1}^{n}\bigcup_{j=1}^{T}
  \mathcal B_{t,s,j}.
  \label{eq:progress-failure-padded-union}
\end{equation}
Indeed, if every event on the right is absent, Step~6 bounds every
coordinate after \(J_t\) by \(1/4\), which is exactly
\(\widetilde{\mathcal E}_t\).  Therefore a union bound over the fixed
set of \(n^2T\) triples gives
\begin{align}
  \mathbb{P}\left(
    \bigcup_{t=1}^n
    \widetilde{\mathcal E}_t^c
  \right)
  \le
  \sum_{t=1}^{n}\sum_{s=1}^{n}\sum_{j=1}^{T}
  \mathbb P(\mathcal B_{t,s,j})
  \le
  n^2T\frac{\delta}{n^2T}
  =
  \delta.
  \label{eq:first-failure-union-bound}
\end{align}
Thus, with probability at least \(1-\delta\), all coupled progress
events hold simultaneously.

On this simultaneous event, the agreement induction in Step~2 shows
that
\[
  \widetilde{\mathcal{T}}_t=\mathcal{T}_t,
  \qquad
  \widetilde{\bm{a}}_t=\bm{a}_t,
  \qquad
  \widetilde{\bm{r}}_t=\bm{r}_t,
  \qquad
  1\le t\le N,
\]
and also \(\widetilde{\bm{r}}_{N+1}=\bm{r}_{N+1}\).  For
\(1\le t\le N\), the identity \(J_t=L_{t-1}\) therefore turns
\(\widetilde{\mathcal E}_t\) into
\eqref{eq:haar-query-conclusion}.  For the additional output vector,
\(n=N+1\), \(\widetilde{\bm{r}}_n=\bm{r}_{N+1}\), and \(J_n=L_N\);
hence \(\widetilde{\mathcal E}_n\) becomes
\eqref{eq:haar-output-conclusion}.  This proves both conclusions for
the actual adaptive procedure.
\end{proof}

\subsection{Proof of Lemma~\ref{lem:bilevel-interface}}
\label{app:interface-proof}

\begin{proof}
Fix \(\ell\in\{0,\ldots,T\}\), assume
\eqref{eq:bilevel-prefix-premise}, and put
\[
  \ell^+=\min\{T,\ell+1\}.
\]
For \(s\in\{\ell,\ell^+\}\), define the padded coordinate vector
computable from the first \(s\) frame columns by
\begin{equation}
  \bm{q}^{[s]}(\bm{z})
  = \bm{P}_s \bm{q} = 
  \sum_{j=1}^s
  \left\langle \bm{u}_j,\rho_R(\bm{z})\right\rangle \bm{e}_j
  \in\mathbb{R}^T.
  \label{eq:padded-prefix-coordinate}
\end{equation}
The expression in
\eqref{eq:padded-prefix-coordinate} makes clear that it is a Borel
function only of the public query and the first \(s\) frame columns.
We now express every returned vector in terms of \(\bm{q}^{[\ell]}\) or
\(\bm{q}^{[\ell^+]}\), without using the random index
\(\operatorname{prog}_{\eta/4}(\bm{q})\).

\emph{Step 1: the upper gradient uses only the first \(\ell\) frame
columns.}
Recall that
\[
  f_{\bm{U}}^\kappa(\bm{x},\bm{z},\bm v)
  =
  A_{\eta,T}(\bm{q}_{\bm{U}}(\bm{z}))+v_2+h_R(\bm{z}).
\]
The upper objective is independent of \(\bm{x}\), and the chain rule gives
\begin{alignat}{3}
  &\nabla_xf_{\bm{U}}^\kappa=0,
  &&\quad
  \nabla_zf_{\bm{U}}^\kappa
  =
  D\rho_R(\bm{z})^\top
  \bm{U}\nabla A_{\eta,T}(\bm{q})
  +\nabla h_R(\bm{z}),
  &&\quad
  \nabla_{\bm v}f_{\bm{U}}^\kappa=\bm e_2.
  \label{eq:upper-gradient-interface}
\end{alignat}
Let \(k=\operatorname{prog}_{\eta/4}(\bm{q})\).  The premise gives \(k\le\ell\).
The derivative-truncation identity
\eqref{eq:extractor-cylinder-A}, first at \(\bm{q}\) and then at
\(\bm{q}^{[\ell]}=\bm{P}_\ell \bm{q}\), gives
\begin{equation}
  \nabla A_{\eta,T}(\bm{q})
  =
  \bm{P}_\ell
  \nabla A_{\eta,T}(\bm{q}^{[\ell]}).
  \label{eq:A-gradient-prefix-support}
\end{equation}
Therefore
\begin{equation}
  \bm{U}\nabla A_{\eta,T}(\bm{q})
  =
  \sum_{j=1}^{\ell}
  \bigl[
    \nabla A_{\eta,T}(\bm{q}^{[\ell]})
  \bigr]_j \bm{u}_j.
  \label{eq:A-frame-expansion}
\end{equation}
In particular, neither the ambient vectors nor their scalar
coefficients depend on a column \(\bm{u}_j\) with \(j>\ell\), and the
formula itself does not require knowing \(k\).

For completeness, formula \eqref{eq:rho-jacobian} gives
\begin{equation}
  D\rho_R(\bm{z})^\top \bm{u}_j
  =
  \frac{\bm{u}_j}{s(\bm{z})}
  -
  \frac{\left\langle \bm{u}_j,\rho_R(\bm{z})\right\rangle }{R^2s(\bm{z})}
  \rho_R(\bm{z}).
  \label{eq:rho-column-formula}
\end{equation}
Thus applying \(D\rho_R(\bm{z})^\top\) to
\eqref{eq:A-frame-expansion} still uses only
\(\bm{u}_1,\ldots,\bm{u}_\ell\).  The remaining terms
\(\nabla h_R(\bm{z})\) and \(\bm e_2\) are functions only of the public
query.  Consequently the complete upper gradient has a Borel
representation using only the first \(\ell\) columns.

\emph{Step 2: a failed lower sample introduces no dependence on an
additional frame column.}
Differentiating \eqref{eq:sample-hard-instance} in the variable order
\((\bm{x},\bm{z},\bm v)\) gives
\begin{equation}
  \nabla\widehat g_{\bm{U}}^\kappa(\bm{x},\bm{z},\bm v;\xi)
  =
  \begin{pmatrix}
    -\bm{z}\\[1mm]
    \kappa^{-1}\bm{z}-\bm{x}
    +(\xi/p)
      \bigl(
        d_\kappa\vartheta_{\kappa,\bm U}(\bm z)-\psi_\eta(v_1)
      \bigr)
      D\vartheta_{\kappa,\bm U}(\bm z)\\[1mm]
    \bm M_\kappa\bm v
    -(\xi/p)\vartheta_{\kappa,\bm U}(\bm z)
      \psi_\eta'(v_1)\bm e_1
  \end{pmatrix}.
  \label{eq:sample-lower-full-gradient}
\end{equation}
When \(\xi=0\), both terms containing the scaled incoming-coordinate
term \(\vartheta_{\kappa,\bm U}\) disappear, and
\eqref{eq:sample-lower-full-gradient} reduces to
\begin{equation}
  \begin{pmatrix}
    -\bm{z}\\
    \kappa^{-1}\bm{z}-\bm{x}\\
    \bm M_\kappa\bm v
  \end{pmatrix}.
  \label{eq:failed-lower-gradient}
\end{equation}
This lower-gradient vector is determined entirely by the query and
contains no dependence on \(\bm U\).  Together with the upper gradient
from Step~1, which depends only on the first \(\ell\) frame columns, it
gives the required response map for a failed call; the complete failed
response need not be independent of those first \(\ell\) columns.

\emph{Step 3: a successful lower sample uses at most one additional
column.}
The quantities in \eqref{eq:sample-lower-full-gradient} that depend on
$\bm U$ are
\(\vartheta_{\kappa,\bm U}(\bm z)\) and
\(D\vartheta_{\kappa,\bm U}(\bm z)\). By definition,
\[
  \vartheta_{\kappa,\bm U}(\bm z)
  =s_\kappa^{-1}b_{\eta,T}(\bm q).
\]
The value-truncation identity
\eqref{eq:extractor-cylinder-values} gives
\begin{equation}
  \vartheta_{\kappa,\bm U}(\bm z)
  =
  s_\kappa^{-1}b_{\eta,T}(\bm{q}^{[\ell^+]}).
  \label{eq:attenuated-value-prefix}
\end{equation}
Indeed, \(\bm{q}\) and \(\bm{q}^{[\ell^+]}\) have the same progress index
\(k\le\ell\) and the same truncation through coordinate
\(\min\{T,k+1\}\).  Applying the value-truncation identity to both
vectors therefore gives the same value.
Since
\[
  (\bm{q}^{[\ell^+]})_j
  =
  \left\langle \bm{u}_j,\rho_R(\bm{z})\right\rangle 
  \quad
  \text{for }j\le\ell^+,
\]
the scalar \(\vartheta_{\kappa,\bm U}(\bm z)\) depends on the frame only through
\[
  \bm{u}_1,\ldots,\bm{u}_{\ell^+}.
\]
It is important to use the value-truncation identity here: the mere
fact that \(\vartheta_{\kappa,\bm U}(\bm z)\) is a scalar would not by itself prevent it
from carrying information about a frame column with index greater
than \(\ell^+\).

For the derivative, the ordinary chain rule gives
\begin{equation}
  D\vartheta_{\kappa,\bm U}(\bm z)
  =
  s_\kappa^{-1}D\rho_R(\bm{z})^\top
  \bm{U}\nabla b_{\eta,T}(\bm{q}).
  \label{eq:attenuated-gradient-interface}
\end{equation}
The derivative-truncation identity
\eqref{eq:extractor-cylinder-b} implies
\begin{equation}
  \nabla b_{\eta,T}(\bm{q})
  =
  \bm{P}_{\ell^+}
  \nabla b_{\eta,T}
  \bigl(\bm{q}^{[\ell^+]}\bigr).
  \label{eq:b-gradient-prefix-support}
\end{equation}
Consequently,
\begin{equation}
  \bm{U}\nabla b_{\eta,T}(\bm{q})
  =
  \sum_{j=1}^{\ell^+}
  \left[
    \nabla b_{\eta,T}
    \bigl(\bm{q}^{[\ell^+]}\bigr)
  \right]_j \bm{u}_j.
  \label{eq:b-frame-expansion}
\end{equation}
Combining \eqref{eq:attenuated-gradient-interface},
\eqref{eq:b-frame-expansion}, and
\eqref{eq:rho-column-formula} shows that
\(D\vartheta_{\kappa,\bm U}(\bm z)\) also uses no frame column after
\(\bm{u}_{\ell^+}\).

\emph{Step 4: define the uniform Borel response maps.}
Let \(M_\ell^{(0)}\) return the upper-gradient formula
\eqref{eq:upper-gradient-interface}, evaluated through
\eqref{eq:A-gradient-prefix-support}--\eqref{eq:A-frame-expansion},
together with the failed lower gradient
\eqref{eq:failed-lower-gradient}.  Let \(M_\ell^{(1)}\) return the
same upper gradient together with the successful specialization of
\eqref{eq:sample-lower-full-gradient}, evaluating
\(\vartheta_{\kappa,\bm U}(\bm z)\) and
\(D\vartheta_{\kappa,\bm U}(\bm z)\) through
\eqref{eq:attenuated-value-prefix} and
\eqref{eq:b-gradient-prefix-support}--\eqref{eq:b-frame-expansion}.
All operations involved are compositions of smooth finite-dimensional
maps and are therefore Borel.  The first map uses only
\(\bm{u}_1,\ldots,\bm{u}_\ell\), and the second only
\(\bm{u}_1,\ldots,\bm{u}_{\ell^+}\).  No map evaluates or receives \(k\).
In Lemma~\ref{lem:haar-cited}, take the public query descriptor to be
\(\bm{a}_t=(\bm{x}_t,\bm{z}_t,\bm v_t)\) and the associated vector to be
\(\bm{r}_t=\rho_R(\bm{z}_t)/\eta\).  For every \(t\), \(\ell\), and \(b\), define
\[
  M_{t,\ell}^{(b)}
  \bigl(
    \mathcal T,\bm{a},\bm{r},\bm{u}_1,\ldots,\bm{u}_{\ell_b}
  \bigr)
  :=
  M_\ell^{(b)}
  \bigl(
    \bm{a},\bm{u}_1,\ldots,\bm{u}_{\ell_b}
  \bigr).
\]
Thus the formal map may simply ignore the transcript argument
\(\mathcal T\) and the redundant vector argument \(\bm{r}\).
Since \(\eta>0\),
\[
  \operatorname{prog}_{1/4}(\bm{U}^\top \bm{r}_t)
  =
  \operatorname{prog}_{\eta/4}(\bm{U}^\top\rho_R(\bm{z}_t)).
\]

At call \(t\), take \(\ell=L_{t-1}\).  On the geometric event
\(\operatorname{prog}_{\eta/4}(\bm{U}^\top\rho_R(\bm{z}_t))\le L_{t-1}\),
the preceding identities show that the actual joint response satisfies
\[
  \bm{Y}_t
  =
  M_{t,L_{t-1}}^{(\xi_t)}
  \bigl(
    \mathcal T_{t-1},
    \bm{a}_t,
    \bm{r}_t,
    \bm{u}_1,\ldots,\bm{u}_{L_t}
  \bigr).
\]
Indeed, on the failed branch \(\xi_t=0\), one has
\(L_t=L_{t-1}\), and the response map uses only the first
\(L_{t-1}\) frame columns.  On the successful branch \(\xi_t=1\), one
has \(L_t=\min\{T,L_{t-1}+1\}\), and the response map uses only the
first \(L_t\) frame columns.  This is precisely
\eqref{eq:formal-prefix-identity}.  Hence the bilevel oracle satisfies
the stated response-dependence condition, and its response
maps do not evaluate the random progress index.
\end{proof}

\subsection{Proof of Proposition~\ref{prop:T-over-p}}
\label{app:progress-proof}

\begin{proof}
\emph{Step 1: associate a bounded vector with each query.}
At call \(t\), let \(\bm{a}_t=(\bm{x}_t,\bm{z}_t,\bm v_t)\) be the point selected by the
algorithm and define
\begin{equation}
  \bm{r}_t
  =
  \frac{\rho_R(\bm{z}_t)}{\eta}.
  \label{eq:normalized-query-probe}
\end{equation}
The soft projection satisfies
\[
  \left\lVert \rho_R(\bm{z}_t)\right\rVert <R =230\eta\sqrt T.
\]
Therefore
\begin{equation}
  \left\lVert \bm{r}_t\right\rVert 
  =
  \frac{\left\lVert \rho_R(\bm{z}_t)\right\rVert }{\eta}
  \le
  \frac{R}{\eta}
  =
  230\sqrt T.
  \label{eq:normalized-probe-bound}
\end{equation}

The construction has $T\ge1$, so
$R=230\eta\sqrt T>\eta$.
Moreover, \eqref{eq:p-choice} gives $0<p\le1$, the dimension condition
implies $d\ge T$, and \eqref{eq:normalized-probe-bound} gives the required norm bound. Thus the basic parameter and norm conditions of
Lemma~\ref{lem:haar-cited} are satisfied.

\emph{Step 2: include an output that was never queried.}
The algorithm may return a point \(\widehat{\bm{x}}_N^{\mathsf A}\) at which it never
called the oracle.  The hyper-objective at \(\widehat{\bm{x}}_N^{\mathsf A}\) evaluates
the rotated zero-chain at the lower  level solution
\[
  \bm{z}^*(\widehat{\bm{x}}_N^{\mathsf A})=\kappa\widehat{\bm{x}}_N^{\mathsf A}.
\]
Accordingly, append the additional output vector
\begin{equation}
  \bm{r}_{N+1}
  =
  \frac{\rho_R(\kappa\widehat{\bm{x}}_N^{\mathsf A})}{\eta}.
  \label{eq:normalized-output-probe}
\end{equation}
It obeys the same norm bound as \eqref{eq:normalized-probe-bound}.
No oracle response is returned, so this additional vector provides no
additional oracle information and does not change \(L_N\).

\emph{Step 3: apply the adaptive Haar lemma.}
Lemma~\ref{lem:bilevel-interface} verifies the required
response-dependence condition.  Choose \(\delta=1/8\).
Substituting \(R^2=230^2 \eta^2 T\) and \(2(N+1)^2T/\delta=16(N+1)^2T\) into
\eqref{eq:haar-dimension-general} gives exactly
\eqref{eq:dimension-choice}.  Hence, with probability at least
\(7/8\),
\begin{equation}
  \operatorname{prog}_{1/4}\!\left(
    \bm{U}^\top
    \frac{\rho_R(\kappa\widehat{\bm{x}}_N^{\mathsf A})}{\eta}
  \right)
  \le
  L_N
  =
  \min\{T,S_N\}.
  \label{eq:haar-applied}
\end{equation}
For every coordinate \(j\), because \(\eta>0\),
\[
  \left\lvert 
    \left[
      \bm{U}^\top\rho_R(\kappa\widehat{\bm{x}}_N^{\mathsf A})/\eta
    \right]_j
  \right\rvert >\frac14
\]
holds if and only if
\[
  \left\lvert 
    \left[
      \bm{U}^\top\rho_R(\kappa\widehat{\bm{x}}_N^{\mathsf A})
    \right]_j
  \right\rvert >\frac{\eta}{4}.
\]
Therefore
\begin{equation}
  \operatorname{prog}_{1/4}\!\left(
    \bm{U}^\top
    \frac{\rho_R(\kappa\widehat{\bm{x}}_N^{\mathsf A})}{\eta}
  \right)
  =
  \operatorname{prog}_{\eta/4}\!\left(
    \bm{U}^\top\rho_R(\kappa\widehat{\bm{x}}_N^{\mathsf A})
  \right).
  \label{eq:progress-rescaling-identity}
\end{equation}

\emph{Step 4: fewer than \(T\) successes occur with high
probability.}
Freshness gives
\[
  \mathbb{E}[\xi_t\mid\mathscr F_{t-1}]=p.
\]
Using the tower property and summing over the calls,
\begin{equation}
  \mathbb{E} S_N
  =
  \sum_{t=1}^N
  \mathbb{E}\!\left[
    \mathbb{E}[\xi_t\mid\mathscr F_{t-1}]
  \right]
  =
  Np.
  \label{eq:expected-success-count}
\end{equation}
Markov's inequality and \eqref{eq:N-progress-condition} now yield
\begin{equation}
  \mathbb{P}(S_N\ge T)
  \le
  \frac{\mathbb{E} S_N}{T}
  =
  \frac{Np}{T}
  \le
  \frac18.
  \label{eq:markov-successes}
\end{equation}
Thus the event
\[
  \mathcal E_{\rm B}\coloneqq\{S_N<T\}
\]
has probability at least \(7/8\).

\emph{Step 5: intersect the two good events.}
Let \(\mathcal E_{\rm H}\) be the event on which
\eqref{eq:haar-applied} holds.  On
\(\mathcal E_{\rm H}\cap\mathcal E_{\rm B}\),
\begin{align}
  \operatorname{prog}_{\eta/4}\!\left(
    \bm{U}^\top\rho_R(\kappa\widehat{\bm{x}}_N^{\mathsf A})
  \right)
  \le
  \min\{T,S_N\}
  =S_N
  <T.
  \label{eq:unfinished-on-good-events}
\end{align}
Finally, the union bound gives
\begin{align*}
  \mathbb{P}(\mathcal E_{\rm H}\cap\mathcal E_{\rm B})
  &=
  1-\mathbb{P}(\mathcal E_{\rm H}^c\cup\mathcal E_{\rm B}^c)\\
  &\ge
  1-\mathbb{P}(\mathcal E_{\rm H}^c)-\mathbb{P}(\mathcal E_{\rm B}^c)\\
  &\ge
  1-\frac18-\frac18
  =\frac34.
\end{align*}
Together with \eqref{eq:unfinished-on-good-events}, this proves
\eqref{eq:unfinished-probability}.
\end{proof}

\section{Proof of Lemma~\ref{lem:gap-gradient}}
\label{app:gap-gradient-proof}

\begin{proof}
At $\bm{x}=0$, $\rho_R(\bm{0})=0$ and $h_R(\bm{0})=0$, so
$\Phi_{\bm{U}}^\kappa(\bm{0})=H_{\eta,T}(\bm{0})$.  For every $\bm{x}$,
$h_R(\kappa \bm{x})\ge0$ and hence
\[
  \Phi_{\bm{U}}^\kappa(\bm{x})
  =H_{\eta,T}(\bm{U}^\top\rho_R(\kappa \bm{x}))+h_R(\kappa \bm{x})
  \ge\inf_{\bm{q}\in\mathbb{R}^T}H_{\eta,T}(\bm{q}).
\]
Taking the infimum over $\bm{x}$ and subtracting from the equality at zero
gives
\begin{align*}
  \Phi_{\bm{U}}^\kappa(\bm{0})-\inf_{\bm{x}}\Phi_{\bm{U}}^\kappa(\bm{x})
  \le H_{\eta,T}(\bm{0})-\inf_{\bm{q}}H_{\eta,T}(\bm{q})\le C_\Delta\eta^2T,
\end{align*}
where the last step is Lemma~\ref{lem:standard-chain}.  This proves
\eqref{eq:hard-instance-gap}.

We now prove the gradient statement. Recall that
\(R=230\eta\sqrt T\), and introduce the normalized variables
\begin{equation}
  \bm{w} = \frac{\kappa \bm{x}}{\eta},
  \qquad
  \bm{r}=\rho_{R/\eta}(\bm{w}),
  \qquad
  \bm{q}=\bm{U}^\top \bm{r}.
  \label{eq:normalized-variables}
\end{equation}
The radial definition gives the exact scaling
\[
  \rho_R(\eta \bm{w})
  =
  \eta\rho_{R/\eta}(\bm{w})
  =
  \eta\bm{r}.
\]
Together with $H_{\eta,T}(\eta \bm{q})=\eta^2F_T(\bm{q})$ and
\eqref{eq:pseudo-huber}, this yields
\begin{equation}
  \Phi_{\bm{U}}^\kappa(\eta \bm{w}/\kappa)
  =\eta^2\mathcal{G}_{\bm{U}}(\bm{w}),
  \label{eq:normalized-Phi}
\end{equation}
where
\begin{equation}
  \mathcal{G}_{\bm{U}}(\bm{w})
  =F_T(\bm{U}^\top\rho_{R/\eta}(\bm{w}))
   +\frac{1}{4}(\frac{R}{\eta})^2 
    \left(\sqrt{1+\frac{\eta^2\left\lVert \bm{w}\right\rVert ^2}{R^2}}-1\right).
  \label{eq:normalized-G}
\end{equation}

Let
\[
  s=\sqrt{1+\frac{\eta^2\left\lVert \bm{w}\right\rVert ^2}{R^2}},
  \qquad \bm{J}=D\rho_{R/\eta}(\bm{w}).
\]
Because the gradient of the pseudo-Huber term in normalized variables
is $\frac{1}{4}\bm{r}$, differentiation of \eqref{eq:normalized-G} gives
\begin{equation}
  \nabla\mathcal{G}_{\bm{U}}(\bm{w})=\bm{J}^\top \bm{U}\nabla F_T(\bm{q})+\frac{1}{4} \bm{r}.
  \label{eq:normalized-G-gradient}
\end{equation}
Assumption \eqref{eq:unfinished-assumption} becomes
$\operatorname{prog}_{1/4}(\bm{q})<T$, and therefore also $\operatorname{prog}_1(\bm{q})<T$.  Let
\[
  j=\operatorname{prog}_1(\bm{q})+1.
\]
Then $j\le T$ and $\left\lvert q_j\right\rvert \le1$.  The
incoming-coordinate derivative bound in Lemma~\ref{lem:standard-chain},
applied with scale one, gives
\begin{equation}
  \partial_jF_T(\bm{q})\le-1,
  \qquad
  \left\lVert \nabla F_T(\bm{q})\right\rVert \le23\sqrt T.
  \label{eq:normalized-frontier-facts}
\end{equation}

Formula \eqref{eq:rho-jacobian}, now at radius $R/\eta$, reads
\[
  \bm{J}=\frac{\bm{I}}s
  -\frac{\eta^2\bm{r}\bm{r}^\top}{R^2s}.
\]
Take the inner product of \eqref{eq:normalized-G-gradient} with the
unit vector $\bm{u}_j$.  The first part of $\bm{J}$ contributes
$(1/s)\partial_jF_T(\bm{q})$; the rank-one correction and the regularizer
may have the opposite sign.  The reverse triangle inequality $|a + b + c| \geq |a| - |b| - |c|$ therefore
gives
\begin{align}
  \left\lvert \left\langle \bm{u}_j,\nabla\mathcal{G}_{\bm{U}}(\bm{w})\right\rangle \right\rvert 
  \ge{}\frac1s
   -\frac{\eta^2\left\lvert \left\langle \bm{u}_j,\bm{r}\right\rangle \right\rvert \left\lVert \bm{r}\right\rVert 
      \left\lVert \nabla F_T(\bm{q})\right\rVert }{R^2s}
   -\frac{1}{4}\left\lvert \left\langle \bm{u}_j,\bm{r}\right\rangle \right\rvert .
  \label{eq:frontier-component-bound}
\end{align}
Here $\left\langle \bm{u}_j,\bm{r}\right\rangle =q_j$, so its absolute value is at most one.

We split into two exhaustive cases.  First suppose
$\left\lVert \bm{w}\right\rVert \le \frac{R}{2\eta}$.  Then
\[
  s
=
\sqrt{1+\frac{\eta^2\|\bm{w}\|^2}{R^2}}
\le
\sqrt{1+\frac14}
=
\frac{\sqrt5}{2},
  \qquad
  \left\lVert \bm{r}\right\rVert =\frac{\left\lVert \bm{w}\right\rVert }{s}\le \|\bm{w}\| \leq \frac{R}{2\eta}.
\]
Since also $s\ge1$, the middle term in
\eqref{eq:frontier-component-bound} satisfies
\[
  \frac{\eta^2\left\lvert \left\langle \bm{u}_j,\bm{r}\right\rangle \right\rvert \left\lVert \bm{r}\right\rVert 
      \left\lVert \nabla F_T(\bm{q})\right\rVert }{ R^2s}\leq\frac{\eta\cdot( R/2)\cdot23\sqrt T}
       {R^2}
  =\frac{23\sqrt T}{2(230\sqrt T)}
  =\frac1{20}.
\]
Using $\left\lVert \nabla\mathcal{G}_{\bm{U}}(\bm{w})\right\rVert \ge
\left\lvert \left\langle \bm{u}_j,\nabla\mathcal{G}_{\bm{U}}(\bm{w})\right\rangle \right\rvert $, 
\begin{equation}
  \left\lVert \nabla\mathcal{G}_{\bm{U}}(\bm{w})\right\rVert  \geq \left\lvert \left\langle \bm{u}_j,\nabla\mathcal{G}_{\bm{U}}(\bm{w})\right\rangle \right\rvert 
  \ge\frac2{\sqrt5}-\frac1{20}-\frac14
  >\frac12.
  \label{eq:small-w-component}
\end{equation}

Now suppose $\left\lVert \bm{w}\right\rVert >\frac{R}{2\eta}$.  The function
$t\mapsto t/\sqrt{1+t^2\eta^2/R^2}$ is increasing, so
\begin{equation}
\|\bm{r}\|
>
\frac{R/2}
{\eta\sqrt{1+1/4}}
=
\frac{R/2}{\eta\sqrt5/2}
=
\frac{R}{\eta\sqrt5}.
  \label{eq:r-large}
\end{equation}
Also $s> \frac{\sqrt5}{2}$.  The eigenvalues of $\bm{J}$ are at most $1/s$, and
therefore $\left\lVert \bm{J}\right\rVert _{\mathrm{op}}< \frac{2}{\sqrt5}$.  Applying the reverse triangle
inequality directly to \eqref{eq:normalized-G-gradient}, then using
\eqref{eq:normalized-frontier-facts} and \eqref{eq:r-large}, gives
\begin{align}
  \left\lVert \nabla\mathcal{G}_{\bm{U}}(\bm{w})\right\rVert 
  &\ge\frac{1}{4}\left\lVert \bm{r}\right\rVert -\left\lVert \bm{J}\right\rVert _{\mathrm{op}}
      \left\lVert \nabla F_T(\bm{q})\right\rVert \\
  &>\frac{R}{4\eta\sqrt5}
      -\frac{46}{\sqrt5}\sqrt T\\
  &=\frac{230/4-46}{\sqrt5}\sqrt T
  >\frac12.
  \label{eq:large-w-gradient}
\end{align}
Thus $\left\lVert \nabla\mathcal{G}_{\bm{U}}(\bm{w})\right\rVert >1/2$ in both cases.

Finally, $\bm{w}=\kappa \bm{x}/\eta$.  Applying the chain rule to
\eqref{eq:normalized-Phi} gives
\[
  \nabla_x\Phi_{\bm{U}}^\kappa(\bm{x})
  =\eta^2\left(\frac\kappa\eta\right)
    \nabla\mathcal{G}_{\bm{U}}(\bm{w})
  =\kappa\eta\nabla\mathcal{G}_{\bm{U}}(\bm{w}).
\]
Combining this identity with the last two case bounds proves
\eqref{eq:amplified-gradient-lower}.
\end{proof}

\section{Proof of Theorem~\ref{thm:main}}
\label{sec:assembly}

\begin{proof}
Fix $\kappa\ge2$, $\Delta>0$, $\sigma\ge0$, and
$\epsilon$ satisfying \eqref{eq:epsilon-range-main}.  Choose
\begin{equation}
  \eta=\frac{4\epsilon}{\kappa},
  \qquad
  X=\frac{\Delta}{2C_\Delta\eta^2},
  \qquad
  T=\lfloor X\rfloor.
  \label{eq:parameter-eta-T}
\end{equation}
We first check the rounding and all smallness conditions.  Since
$\eta^2=\frac{16\epsilon^2}{\kappa^2}$,
\[
  X=\frac{\Delta\kappa^2}{32C_\Delta\epsilon^2}.
\]
Fix the theorem constant
\begin{equation}
  c_\epsilon
  =\min\left\{
    1,\frac{c_{\rm reg}}4,
    \frac1{16\sqrt{C_\Delta}}
  \right\}.
  \label{eq:cepsilon-choice}
\end{equation}
Then $\epsilon\le c_\epsilon\sqrt\Delta$ and $\kappa\ge2$ imply
\[
  X
  =\frac{\Delta\kappa^2}{32C_\Delta\epsilon^2}
  \ge\frac{4}{32C_\Delta c_\epsilon^2}
  \ge32.
\]
In particular, $X\ge2$, and hence
\[
  \lfloor X\rfloor\ge X-1\ge X/2.
\]
Substituting the definition of $X$ gives the two-sided estimate
\begin{equation}
  \frac{\Delta}{4C_\Delta\eta^2}
  \le T
  \le\frac{\Delta}{2C_\Delta\eta^2}.
  \label{eq:T-two-sided}
\end{equation}
The right inequality implies $C_\Delta\eta^2T\le\Delta/2$.
Moreover, \eqref{eq:cepsilon-choice} and
$\epsilon\le c_\epsilon\le c_{\rm reg}/4$ give
\[
  \eta=\frac{4\epsilon}{\kappa}
  \le\frac{c_{\rm reg}}{\kappa}
  \le c_{\rm reg}
  \le\eta_0.
\]
Thus Lemmas~\ref{lem:population-solution} and
\ref{lem:regularity} apply.

Choose $p$ according to \eqref{eq:p-choice}.  At every call, take the
upper sample $\zeta_t$ to be a fixed dummy symbol and draw the lower
sample bits $\xi_t\stackrel{\mathrm{iid}}{\sim}\operatorname{Ber}(p)$,
independently of the Haar frame and the algorithm's random seed.  By
Lemma~\ref{lem:oracle-properties}, the resulting 
$\mathcal{SFO}$ is conditionally unbiased and has variance at most
$\sigma^2$.  The reciprocal $p^{-1}$ is given by \eqref{eq:p-inverse}.

Let $N$ satisfy \eqref{eq:main-call-budget}, where we now fix
\begin{equation}
  c_{\rm LB}
  =\frac1{512C_\Delta\max\{1,256C_{\rm var}\}}.
  \label{eq:clb-choice}
\end{equation}
We show that this choice ensures $N\le \frac{T}{8p}$.  From
\eqref{eq:T-two-sided} and \eqref{eq:p-inverse},
\begin{align}
  \frac{T}{8p}
  &\ge
  \frac{\Delta}{32C_\Delta\eta^2}
  \max\left\{1,\frac{\sigma^2\kappa^2}{C_{\rm var}\eta^4}\right\}
  \notag\\
  &=\frac{\Delta\kappa^2}{512C_\Delta\epsilon^2}
  \max\left\{1,
    \frac{\sigma^2\kappa^6}{256C_{\rm var}\epsilon^4}
  \right\}.
  \label{eq:T-over-p-scaled}
\end{align}
For every $a\ge0$ and $\beta>0$,
\begin{equation}
  \max\left\{1,\frac{a}{\beta}\right\}
  \ge\frac1{\max\{1,\beta\}}\max\{1,a\}.
  \label{eq:max-comparison}
\end{equation}
Indeed, if $\beta\ge1$, both $1\ge1/\beta$ and $a/\beta$ are at
least the corresponding term divided by $\beta$; if $\beta<1$, then
$a/\beta\ge a$ and the factor on the right is one.  Apply
\eqref{eq:max-comparison} with
$a=\sigma^2\kappa^6/\epsilon^4$ and
$\beta=256C_{\rm var}$.  Together with \eqref{eq:clb-choice}, this
proves $N\le \frac{T}{8p}$.

Set $n=N+1$ and choose $d$ satisfying
\eqref{eq:dimension-choice}.  Fix an arbitrary algorithm
\[
  \mathsf A
  \in
  \mathcal A_N^{\mathcal{SFO}}(d,d+2).
\]
Draw $\bm{U}$ Haar-uniformly from
$\operatorname{St}(d,T)$ and consider the hard instance
\eqref{eq:hard-f}--\eqref{eq:sample-hard-instance}.  This use of a
random frame is only a proof distribution; a deterministic frame will
be fixed below.  The frame, the algorithm seed, and the i.i.d.\
Bernoulli oracle samples are taken mutually independent on the product
probability space. Lemma~\ref{lem:gap-gradient} and the upper bound in
\eqref{eq:T-two-sided} give, for every $\bm{U}$,
\[
  \Phi_{\bm{U}}^\kappa(\bm{0})-\inf_{\bm{x}}\Phi_{\bm{U}}^\kappa(\bm{x})
  \le C_\Delta\eta^2T
  \le\Delta/2
  \le\Delta.
\]
Lemma~\ref{lem:regularity} verifies all class conditions,
including $\kappa_y=\Theta(\kappa)$.

We record the measurability and integrability needed to fix one frame.
The algorithmic query maps are Borel, the hard sample-gradient maps are
Borel in the frame, query, and Bernoulli bit, and the interaction has
only finitely many recursive steps.  Hence
\[
  (\bm U,\omega_{\rm alg},\xi)
  \longmapsto
  \left\lVert
    \nabla\Phi_{\bm U}^\kappa
      (\widehat{\bm x}_N^{\mathsf A})
  \right\rVert
\]
is jointly Borel measurable.  It is also uniformly bounded.  Indeed,
\eqref{eq:normalized-G-gradient},
\(\lVert\bm J\rVert_{\rm op}\le1\),
\(\lVert\nabla F_T\rVert\le23\sqrt T\), and
\(\lVert\bm r\rVert\le \frac{R}{\eta}=230\sqrt T\) give
\[
  \left\lVert\nabla\Phi_{\bm U}^\kappa(\bm x)\right\rVert
  \le\kappa\eta(23+230/4)\sqrt T
  \qquad\text{for every }\bm U\text{ and }\bm x.
\]
Thus its first and second powers are integrable, and Tonelli-Fubini
and Jensen may be applied below without an extended-value ambiguity.

Proposition~\ref{prop:T-over-p} shows that the rotated chain coordinates
at the output have progress below $T$ with probability at least $3/4$.
On that event,
Lemma~\ref{lem:gap-gradient} and the choice of $\eta$ give
\begin{equation}
  \left\lVert
    \nabla\Phi_{\bm{U}}^\kappa(\widehat{\bm{x}}_N^{\mathsf A})
  \right\rVert
  >\frac{\kappa\eta}{2}
  =2\epsilon.
  \label{eq:two-epsilon-event}
\end{equation}
The gradient norm is nonnegative outside the event, so
\begin{equation}
  \mathbb E_{\bm{U},\omega_{\rm alg},\xi}
    \left\lVert
      \nabla\Phi_{\bm{U}}^\kappa(\widehat{\bm{x}}_N^{\mathsf A})
    \right\rVert
  >\frac34(2\epsilon)=\frac32\epsilon.
  \label{eq:average-over-U}
\end{equation}
By Tonelli-Fubini, the left-hand side of
\eqref{eq:average-over-U} is the Haar average of the conditional
expectations at fixed frames.  If every deterministic frame had
conditional expectation at most $3\epsilon/2$, this average would be at
most $3\epsilon/2$, a contradiction.  Hence
there exists at least one deterministic frame $\bm{U}_0$ for which
\[
  \mathbb E_{\omega_{\rm alg},\xi}
    \left\lVert
      \nabla\Phi_{\bm{U}_0}^\kappa(\widehat{\bm{x}}_N^{\mathsf A})
    \right\rVert
  >\frac32\epsilon.
\]
The hard functions obtained by fixing $\bm{U}_0$ are deterministic.
The selected frame may depend on the algorithm as permitted by the
minimax quantifier order, but it does not depend on any realization of
the algorithm seed or oracle samples.
Finally, Jensen's inequality gives
\[
  \mathbb E_{\omega_{\rm alg},\xi}
    \left\lVert
      \nabla\Phi_{\bm{U}_0}^\kappa(\widehat{\bm{x}}_N^{\mathsf A})
    \right\rVert^2
  \ge
  \left(
    \mathbb E_{\omega_{\rm alg},\xi}
    \left\lVert
      \nabla\Phi_{\bm{U}_0}^\kappa(\widehat{\bm{x}}_N^{\mathsf A})
    \right\rVert
  \right)^2
  >\frac94\epsilon^2.
\]
This proves \eqref{eq:main-failure}.  Since
$\kappa\le\kappa_y\le14\kappa$, monotonicity and the max
comparison \eqref{eq:max-comparison} lose at most the fixed factor
$14^8$ when replacing $\kappa$ by $\kappa_y$.  This proves
\eqref{eq:main-kappa-y-form}.
\end{proof}

\end{document}